\theoremstyle{plain}
\newtheorem{theorem}{Theorem}[section]
\newtheorem{proposition}[theorem]{Proposition}
\newtheorem{corollary}[theorem]{Corollary}
\newtheorem{lemma}[theorem]{Lemma}
\newtheorem{assumption}[theorem]{Assumption}
\theoremstyle{definition}
\newtheorem{definition}[theorem]{Definition}
\newtheorem*{remark}{Remark}
\newcommand{\RR}{\mathbb{R}}
\newcommand{\NN}{\mathbb{N}}
\newcommand{\ZZ}{\mathbb{Z}}
\newcommand{\cC}{\mathcal{C}}
\newcommand{\drm}{\mathrm{d}}
\newcommand{\euler}{\mathrm{e}}
\DeclareMathOperator{\supp}{supp}
\DeclareMathOperator{\Deg}{Deg}
\DeclareMathOperator{\spec}{spec}
\DeclareMathOperator{\av}{av}
\DeclareMathOperator{\arsinh}{arsinh}
\DeclareMathOperator{\Eins}{\mathbf{1}}
\DeclareMathOperator{\diam}{diam}
\newcommand{\eat}[1]{}
\let\oldint\int
\renewcommand{\int}{\oldint\limits}
\newcommand{\Hmm}[1]{\leavevmode{\marginpar{\tiny%
			$\hbox to 0mm{\hspace*{-0.5mm}$\leftarrow$\hss}%
			\vcenter{\vrule depth 0.1mm height 0.1mm width \the\marginparwidth}%
			\hbox to
			0mm{\hss$\rightarrow$\hspace*{-0.5mm}}$\\\relax\raggedright #1}}}
\begin{document}

\title{Gaussian upper heat kernel bounds and Faber-Krahn inequalities on graphs
}
\author{Christian Rose\thanks{christian.rose@uni-potsdam.de}\thanks{Support by the DFG grant  "\emph{Heat kernel behavior at infinity on graphs}", project number 540199605,
is gratefully acknowledged. }
}
\affil[]{Institut f\"ur Mathematik, Universit\"at Potsdam, 		14476  Potsdam, Germany}
\date{\today}
\maketitle
\begin{abstract}
We investigate the equivalence of relative Faber-Krahn inequalities and the conjunction of Gaussian upper heat kernel bounds and volume doubling on large scales on  graphs.  
For the normalizing measure, we obtain their equivalence up to constants. 
If the counting measure or arbitrary measures are considered, a local regularity condition containing the vertex degree and a variable dimension enter the equivalence. Correction functions for the Gaussian, doubling, and Faber-Krahn dimension depending on the vertex degree are introduced. For the Gaussian and doubling, these correction functions always tend to one at infinity. Moreover, the variable Faber-Krahn dimension  can be related to the doubling dimension and the vertex degree growth. 
\\

	\noindent \textbf{Keywords:} graph, heat kernel, Gaussian bound, unbounded geometry
	\\
	\noindent \textbf{2020 MSC:} 39A12, 35K08, 60J74
\end{abstract}
\tableofcontents
\section{Introduction and main results}
This article is concerned with Gaussian upper bounds of continuous time heat kernels on discrete weigthed graphs with possibly unbounded geometry. In this general setting,
we aim at a characterization of the conjunction of Gaussian upper heat kernel bounds and the volume doubling property and relative Faber-Krahn inequalities.  
On Riemannian manifolds, this equivalence was obtained by Grigor'yan \cite{Grigoryan-94}, cf.~\cite{Grigoryan-09}. Discrete-time heat kernels on graphs with normalizing measure were treated  in \cite{CoulhonG-98}. For characterizations of Gaussian bounds in different contexts, we refer to \cite{Varopoulos-85,CarlenKS-87,Coulhon-92,SaloffCoste-92,Grigoryan-94,
SaloffCoste-92a,Delmotte-99,SaloffC-01,BCS, BarlowChen-16, GrigoryanHH} for a non-exhaustive list of references.

The interest in such equivalences stems from the fact that Faber-Krahn inequalities are easily controlled by geometric terms like isoperimetric constants via Cheeger's inequality, \cite{BauerKW-15}. Moreover, there are settings in which Faber-Krahn inequalities have certain stability properties, e.g., under rough isometries \cite{Barlow-book}, which is not evident for Gaussian upper bounds for the heat kernel.

Pang and Davies observed that the continuous-time heat kernel on $\ZZ$ with standard weights and normalizing measure exhibits Gaussian behavior for large but not small times \cite{Davies-93, Pang-93}. Moreover, the well-known Varadhan asymptotics for small times on Riemannian manifolds fails on all graphs, \cite{KellerLVW-15}. Hence, characterizations of Gaussian upper bounds for continuous-time heat kernels in the spirit of Grigor'yan's result can only be expected if arbitrarily small times are excluded. This was treated for the normalizing measure, e.g., in \cite{Barlow-book}.

The ideas needed to obtain heat kernel upper bounds have proved very fruitful in the study of random walks in symmetric random environments. A particular example was given by Barlow, who obtained Gaussian bounds on the heat kernel with normalizing measure on supercritical percolation clusters in $\ZZ^d$, \cite{Barlow-04}.
A general feature of random environments on graphs is that conditions required for heat kernel bounds only hold for sufficiently large balls.

Laplacians on graphs define Dirichlet forms of pure jump type and heat kernels without (sub-) Gaussian behavior for small times. By definition, the vertex measure on a discrete graph is purely atomic. The discreteness of the space leads to a violation of the generalized capacity condition and its equivalent properties as well as the desired upper heat kernel bounds for jump processes considered in, e.g., 
\cite{GrigoryanHH}.
One key to resolve these problems is the employment of intrinsic metrics.
The discovery of intrinsic metrics for strongly local and regular Dirichlet forms in \cite{Sturm-94,FrankLenzWingert-14} led to systematic treatement of such metrics on graphs as well. 
After their first explicit appearance for graphs in \cite{Folz-11,GrigoryanHuangMasamune-12}, they turn out to be vital in the study of graphs with unbounded geometry, \cite{BauerKH-13, HaeselerKW-13, Folz-14b, Folz-14, BauerKW-15,Keller-15, HuangKS-20, KellerLW-21}.

Davies observed that heat kernel bounds for the counting measure improve by choosing metrics different from the combinatorial one, which is typically related to the normalizing measure, \cite{Davies-93a}. Gaussian upper heat kernel bounds for elliptic operators with integrable weights on $\ZZ^n$ with respect to the counting measure and the chemical distance have been provided by \cite{AndresDS-16,Bella-22}. Recently, Keller and the author characterized continuous-time heat kernel upper bounds in terms of Sobolev inequalities on large balls, \cite{KellerRose-22a, KellerRose-24}. That the latter follow from isoperimetric inequalities and in turn holds for anti-trees was discovered in \cite{KellerRose-22b}.

In this paper, we extend Grigor'yan's above mentioned equivalence to deterministic graphs such that, for all large enough balls, relative Faber-Krahn inequalities hold with respect to intrinsic metrics. Contrary to earlier works on this equivalence, we obtain Davies' and Pang's sharp Gaussian behavior of the heat kernel in terms of such metrics. Our work delivers more precise sufficient conditions on the range of balls for which Faber-Krahn inequalities are required in order to obtain heat kernel upper bounds.
One main result for the normalizing measure is the analogue of Grigoryan's theorem up to a uniform regularity condition on the smallest considered ball. A further generalization of this result allows to remove this assumption on the cost of a new local regularity condition becoming part of the equivalence and a variable dimension in the Faber-Krahn inequality. If the counting measure or general measures are considered, this dimension relates to the volume doubling dimension and the  vertex degree growth. Moreover, the Gaussian bounds and the volume doubling property contain terms involving the vertex degree and become neglible on large scales.

\subsection{The set-up}
Let $X$ be an at most countable set and
extend a function
 $m\colon X\to(0,\infty)$ of full support to a measure on $X$ via countable additivity.
A symmetric function $b\colon X\times X\to [0,\infty)$  with 
\[
b(x,x)=0\quad\text{and}\quad \deg_x:=\sum_{y\in X}b(x,y)<\infty, \quad x\in X,
\]
is called a \emph{graph} over the measure space $(X,m)$. {We write $ x\sim y $ whenever $ b(x,y)>0 $ for $ x,y\in X $.} Furthermore, denote the (weighted) vertex degree by 
\[
\Deg_x:=\frac{\deg_x}{m(x)}=\frac{1}{m(x)}\sum_{y\in X}b(x,y),\qquad x\in X.
\]

Throughout this article, we assume that graphs are \emph{locally finite}, i.e., the set $\{y\in X\colon b(x,y)>0\}$ is finite for any $x\in X$. 

The Laplace operator $\Delta\colon \cC(X)\to\cC(X)$ on a graph acts on the set of functions $\cC(X)$ as
\[
\Delta f(x)=\frac1{m(x)}\sum_{y\in X} b(x,y)(f(x)-f(y)), 
\] 
for $ f\in \cC(X) $ and $  x\in X  $.
By local finiteness, $ \Delta $ maps the set of compactly supported functions $ \cC_{c}(X) $ into itself. Hence, the restriction of $ \Delta $ to $ \cC_c(X) $ is symmetric in $ \ell^{2}(X,m)$, the set of square integrable functions on $(X,m)$.  By slight abuse of notation we also denote the closure of $\Delta\restriction_{\cC_c(X)}$ in $\ell^2(X,m)$ by   $\Delta\geq 0$.
Note that $\Delta$ is bounded if and only if $\Deg$ is uniformly bounded in $X$. 

The heat semigroup $(\euler^{-t\Delta})_{t\geq 0}$, acts in $ \ell^{2}(X,m) $ and has a kernel $p\colon [0,\infty)\times X\times X\to[0,\infty)$, called the \emph{heat kernel},
satisfying
\[
\euler^{-t\Delta}f(x)=\sum_{x\in X} m(x) p_t(x,y)f(y)
\]
for all $ f\in \ell^2(X,m),  x\in X, t\geq 0 $.
For an initial condition $ f\in \ell^{2}(X,m) $, the function $ u=\euler^{-t\Delta}f $ solves the heat equation
\[
\frac{d}{dt}u=-\Delta u, \quad u(0,\cdot)=f.
\] 

The heat kernel of a graph corresponds to the transition density of the jump process on the underlying graph associated to the Laplacian $\Delta$. Two 
 examples for the measure $m$ are frequently discussed in the literature. Laplacians with normalizing measure $m=\deg$ generate so-called constant speed random walks. Variable speed random walks on graphs correspond to the counting measure $m=1$.
\\

In order to formulate our results we recall the definition of intrinsic metrics and list the required assumptions. 
A pseudo-metric $\rho\colon X\times X\to[0,\infty)$ on a graph $b$ on $(X,m)$ is called {intrinsic} if 
\[
\sum_{y\in X} b(x,y)\rho^2(x,y)\leq m(x),
\]
for all  $ x\in X $. We always abbreviate $\diam X:=\diam_\rho(X)=\sup\{\rho(x,y)\colon x,y\in X\}$ if there is no confusion about the intrinsic metric $\rho$. The {jump size} of an intrinsic metric $\rho$ is given by 
\begin{align*}
	S=\sup\{\rho(x,y)\colon b(x,y)>0, x,y\in X \}.
\end{align*}
Note that in the case $m=\deg$, the combinatorial distance $\rho_c$ is automatically intrinsic with jump size one.
As usual, for $r\geq 0 $, $x\in X$, we denote distance balls with respect to $\rho$ by $ B_x(r):=B_x^\rho(r)=\{y\in X\mid \rho(x,y)\leq r\}$.
A pseudo metric $ \rho  $ on a graph $b$ over $(X,m)$ is called  {path metric}  if there is $ w: X\times X\to [0,\infty] $ such that for all $ x,y\in X $
\begin{align*}
	\rho (x,y)=\inf_{x=x_{0}\sim \ldots\sim x_{n}=y}\sum_{j=1}^{n}w(x_{j-1},x_{j})
\end{align*}
{and $w(x,y)<\infty $ if and only if $ x\sim y $.}
Note that the choice $ w(x,y)=(\Deg_{x}\vee\Deg_{y})^{-1/2} $ for $ x\sim y $ and $ w(x,y) =\infty$ otherwise always yields an intrinsic path metric.

\begin{assumption}\label{assumption2} We assume that the intrinsic metric is a path metric whose distance balls are compact and that the jump size satisfies $S<\infty$.
\end{assumption}

Assumption~\ref{assumption2} yields that the metric space $ (X,\rho) $ is complete and geodesic, i.e., any two vertices $ x,y\in X $ can be connected by a path $x= x_{0}\sim\ldots \sim x_{n}=y $ such that $ \rho(x,y)=\rho(x,x_{j})+\rho(x_{j},y) $ for all $ j=0,\ldots,n $, see \cite[Chapter~11.2]{KellerLW-21}.
\\

In the following, we denote by $\|\cdot\|$ the norm in $\ell^2(X,m)$. For $f\in \cC(X)$ and $x,y\in X$ we abbreviate $\nabla_{xy}f:=f(x)-f(y)$ and set
\[
|\nabla f|^2(x):=\frac{1}{m(x)}\sum_{y\in X}b(x,y)(\nabla_{xy}f)^2.
\]
Recall the first Dirichlet eigenvalue of subsets $U\subset X$ 
\[
\lambda (U)=\inf_{f\in\cC(U),f\neq 0}\frac{\sum_U f\Delta f}{\Vert f\Vert^2}=\inf_{f\in\cC(U),f\neq 0}\frac{1}{2}\frac{\||\nabla f|\|^2}{\|f\|^2},
\]
and the bottom of the spectrum of the Laplacian
\[
\Lambda=\inf\spec(\Delta)=\inf_{U\subset X}\lambda(U).
\]

We will be dealing with the following properties a graph might or might not satisfy.

\begin{definition}Let $0\leq R_1\leq R_2$ and functions $a\colon X\times [R_1,R_2]\to(0,1]$, $n,\nu \colon X\times [R_1,R_2]\to(0,\infty)$, $\Psi\colon X\times X\times [R_1,R_2]\to (0,\infty)$, $\Phi\colon X\times [R_1,R_2]\times [R_1,R_2]\to (0,\infty)$, and  $B\subset X$.
\begin{itemize}
\item[(FK)] The \emph{relative Faber-Krahn inequality} $FK(R_1,R_2,a,n)$ is satisfied in $B$, if for all $x\in B$, $r\in[R_1,R_2]$, and $U\subset B_x(r)$ we have
\[
\lambda(U)\geq \frac{a_x(r)}{r^2}\left(\frac{m(B_x(r))}{m(U)}\right)^{\frac{2}{n_x(r)}}.
\]
We abbreviate $FK(R_1,a,n):=FK(R_1,\infty,a,n)$.
\item[(G)] \emph{Gaussian upper bounds} $G(R_1,R_2,\Psi,n)$ hold in $B$ if for all $x,y\in B$ and $t\geq R_1^2$ the heat kernel satisfies
\[
p_t(x,y)\leq \Psi_{xy}(\sqrt t\wedge R_2)
\frac{ 
\left(1\vee \frac{t}{S^2}\arsinh^2\left(\frac{\rho(x,y) S}{t}\right)\right)^{\frac{n_{xy}(\sqrt t\wedge R_2)}{2}}
}{\sqrt{m(B_x(\sqrt t\wedge R_2))m(B_y(\sqrt t\wedge R_2))}}
\euler^{-\Lambda(t-t\wedge R_2^2)-\zeta(\rho(x,y),t)}
\]
where $n_{xy}(\sqrt t\wedge R_2):= \tfrac{n_x(\sqrt t\wedge R_2)+n_y(\sqrt t\wedge R_2)}2$ and 
\[
\zeta\big(\rho,{t}\big)
=
\frac{\rho}{S}\arsinh\left(\frac{\rho S}{{t}}\right) 
-\frac{1}{S^2}\left(\sqrt{\rho^2S^2+{t}^2} -{t}
\right),\qquad \rho\geq 0, t>0.\]
We abbreviate $G(R_1,\Psi,n):=G(R_1,\infty,\Psi,n)$.
\item[(VD)]The \emph{volume doubling property} $V(R_1,R_2,\Phi,n) $ is satisfied in $B$ if for all $x\in B$ and $R_1\leq r\leq R\leq R_2$, we have 
\[
\frac{m(B_x(R))}{m(B_x(r))}\leq \Phi_x^{n_x(R)}(r)\left(\frac{R}{r}\right)^{n_x(R)}.
\]
Furthermore, property $V(R_2,\Phi,n)$ holds if the above inequality is true for all $x\in B$ and $0<r\leq R\leq R_2$. We abbreviate $V(\Phi,n):=V(\infty,\Phi,n)$.
\end{itemize}
\end{definition}

\begin{remark}
 If the functions $n,\phi,\Psi,\mu$ in the definitions above are constants, we will mention their constancy explicitly.
\end{remark}

\begin{remark}By the work of Davies and Pang, the function $\zeta$ in the case $S=1$ is sharp for the normalizing measure on the integers, 
\cite{Davies-93, Pang-93}. Moreover, we have for $r>0$
\[
\zeta(r,t)\ {\sim} \ \frac{r^2}{2t},\qquad t\to \infty,
\]
where  {$\sim $} 
means that the left-hand side divided by the right-hand side converges to one. 
\end{remark}

\subsection{Global variants of the main results}

In this section we present global versions of the characterizations announced in the introduction. We first restrict to the most common choices of normalizing and counting measure and elaborate on the differences before we present a version which is valid for all weighted graphs. All our results are obtained from localized versions which can be found in the subsequent sections with precise ranges of parameters. 
\\

First, we present our results about the normalizing measure $m=\deg$. In this case, the Laplacian $ \Delta $ is bounded on $ \ell^{2}(X,\deg) $.

\begin{theorem}[normalizing measure, uniform version]\label{thm:normalizedmain}
Let $m=\deg$, $\diam X=\infty$, and $n>0$, $R\geq 300$ constants.
If there exists a constant $a>0$ such that $FK(R,a,n)$ holds in $X$, then there exists a constant $C=C_{a,n,R_1}\geq 1$ such that $G(R,C,n) $ and  $V(C,n) $ hold in $X$.
Conversely, if there exists $C>0$ such that $G(R,C,n) $ 
and $ V(C, n)$ hold in $X$, 
then there exists $a'=a'_{n,C,R_1}>0$ such that $FK(R,a',n)$ holds in $X$.
\end{theorem}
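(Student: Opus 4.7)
The plan is to follow the blueprint of Grigor'yan's classical manifold argument, adapted to the graph setting via intrinsic metrics and a Davies--Pang perturbation that accounts for the jump nature of the process. Since the paper announces localized versions in the subsequent sections, the global statement will be obtained by instantiating those with $B=X$, constant dimension $n$, and exploiting $m=\deg$ so that the combinatorial distance is automatically intrinsic with jump size $S=1$; the numeric threshold $R\geq 300$ arises from making the iteration constants absorb uniformly.

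For the direction $FK(R,a,n)\Rightarrow G(R,C,n)+V(C,n)$ I would proceed in three steps. First, derive an on-diagonal bound $p_t(x,x)\leq C/m(B_x(\sqrt t))$ for $t\geq R^2$ by a Nash-type iteration: the Faber--Krahn inequality applied to level sets $\{u\geq s\}$ of $u=p_t(x,\cdot)$ gives the differential inequality $-\partial_t\|u\|_2^2\geq c\, r^{-2}\|u\|_2^{2+4/n}$ once volumes are large enough, and integrating over the window $t\in[R^2,\infty)$ yields the bound. Second, upgrade to the off-diagonal Gaussian by Davies' exponential perturbation with a weight $\euler^{\psi}$ whose choice is dictated by the jump form: the bound $\sum_y b(x,y)(\euler^{\psi(y)-\psi(x)}-1)^2\leq m(x)\cdot h(\|\nabla\psi\|_\infty)$ combined with optimising $\psi$ along a geodesic produces exactly the $\arsinh$-correction and the $\zeta$-exponent. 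Third, recover $V(C,n)$ by combining the FK on-diagonal upper bound with the on-diagonal lower bound coming from stochastic completeness, $1=\sum_y m(y) p_t(x,y)\leq p_t(x,x)^{1/2}p_{2t}(x,x)^{1/2}$, where the Gaussian bound restricts the mass essentially to $B_x(\sqrt t)$; comparing the two inequalities at scales $r\leq R\leq \sqrt t$ forces the doubling bound with the claimed exponent.

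For the converse $G(R,C,n)+V(C,n)\Rightarrow FK(R,a',n)$, for $x\in X$, $r\geq R$, and $U\subset B_x(r)$ I would estimate $\lambda(U)$ via the Dirichlet heat semigroup $\euler^{-t\Delta^U}$. The Gaussian bound applied on $X$ dominates $p^U_t\leq p_t$, which yields the $L^1\to L^\infty$ bound $\|p_t^U\|_\infty\leq C/m(B_x(\sqrt t))$; a standard Davies-type interpolation then gives $\|\euler^{-t\Delta^U}\|_{L^2(U)\to L^2(U)}\leq (m(U)/m(B_x(\sqrt t)))^{1/2}\euler^{-\lambda(U)t/2}$ (using also that the mass of $\euler^{-t\Delta^U}\mathbf 1_U$ stays supported in $U$), so taking logarithms and choosing $\sqrt t\asymp r$ together with the volume doubling $m(B_x(\sqrt t))\geq c\, m(B_x(r))\cdot(\sqrt t/r)^n$ yields $\lambda(U)\geq (a'/r^2)(m(B_x(r))/m(U))^{2/n}$ after optimising in $t$.

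The main obstacle is the Davies--Pang perturbation step in the first direction: reproducing the sharp $\zeta$-function, and not a cruder Gaussian, requires the perturbation weight $\psi$ to satisfy $|\nabla_{xy}\psi|=\arsinh(\rho S/t)\cdot \rho(x,y)/S$ along geodesics and a careful bound of the resulting discrete $\Gamma$-operator so that $S$ enters only through the explicit $\arsinh$ argument; secondary difficulties are tracking the explicit constant dependence to verify that $R\geq 300$ is genuinely sufficient and ensuring that the on-diagonal Nash iteration can start from the scale $R$ rather than from $0$, the latter being impossible because of the Pang--Davies obstruction for small times.
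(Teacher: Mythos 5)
Your high-level architecture is the right one and matches the paper's assembly of localized results (Corollary~\ref{cor:asympdoublingnormalized}, Corollary~\ref{cor:gaussianboundcleannormalized}, Theorem~\ref{thm:normalizedclassicalFK}): an on-diagonal bound from (FK), Davies' exponential perturbation with the $\cosh$/$\arsinh$ weight for the sharp off-diagonal factor $\zeta$, and a trace argument for the converse. Two of your steps, however, contain genuine gaps.

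First, your derivation of $V(C,n)$ from the heat kernel bounds is circular and, even granting it, incomplete. To extract an on-diagonal lower bound from $1=\sum_y m(y)p_t(x,y)$ you must show that the Gaussian upper bound confines the mass to $B_x(K\sqrt t)$, and summing the Gaussian tail over annuli already requires control of $m(B_x(\rho))$ for large $\rho$ --- i.e.\ volume doubling or at least a quantitative growth bound, which is what you are trying to prove. Even if one patches this (e.g.\ via the volume upper bound that (FK) gives through (L)), the comparison of on-diagonal upper and lower bounds only produces doubling for radii $\geq R$, whereas $V(C,n)$ requires all $0<r\leq R'$. The paper sidesteps both issues by proving (V) \emph{directly} from (FK) without the heat kernel: plug the tent function $(r-S-\rho(o,\cdot))_+$ into the Rayleigh quotient to get $\lambda(B_o(r))\leq 16r^{-2}\,m(B_o(r))/m(B_o(r/4))$, combine with (FK) and iterate down to scale $S$ (Theorem~\ref{thm:asympdoubling}), then glue in the small radii via the local regularity bound $m(B_o(r))/m(o)\leq Cr^{n}$, itself a one-line consequence of (FK) applied to $U=\{o\}$ (Lemma~\ref{lem:localreg}). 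You should prove (V) this way, before and independently of (G).

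Second, in the converse direction the phrase ``choosing $\sqrt t\asymp r$ \dots after optimising in $t$'' hides the entire difficulty. With $t\asymp r^2$ the trace inequality $\euler^{-\lambda(U)t}\leq\sum_U m\,p_t(\cdot,\cdot)$ only yields $\lambda(U)\gtrsim r^{-2}\ln\big(m(B_x(r))/(Cm(U))\big)$, a logarithm rather than the power $(m(B_x(r))/m(U))^{2/n}$. To convert the logarithm into the power one must take $t\asymp r^{2}\big(\epsilon\, m(U)/m(B_x(r))\big)^{2/n}$, and then (a) verify $t\geq R^2$, the only regime where the on-diagonal bound is available --- for small $U$ this requires $m(B_x(r))/m(U)\lesssim r^{n}$, which here comes from $m=\deg$ and (L); and (b) handle the case where $m(U)$ is comparable to $m(B_x(r))$, in which the logarithm is negative and the bound is vacuous. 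The paper resolves (b) by a dichotomy: one passes to the enlarged ball $B_x(Ar)$ and uses reverse volume doubling (Lemma~\ref{lem:reversedoubling}, Lemma~\ref{lem:choicegammanprime}) to show $m(U)\leq\epsilon\,m(B_x(Ar))$. This is precisely where the hypothesis $\diam X=\infty$ (and the path-metric/geodesic property) is used; your proposal never invokes it, which signals that this case is missing.
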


\begin{remark}Theorem~\ref{thm:normalizedmain} is the global version of the more general and localized results presented in Corollary~\ref{cor:asympdoublingnormalized}, Corollary~\ref{cor:gaussianboundcleannormalized} and Theorem~\ref{thm:normalizedclassicalFK}.
\end{remark}

\eat{
The additional assumption on $\mu$ can be removed by incorporating the following local regularity property in the equivalence and allowing for variable dimensions in the Faber-Krahn inequality. 

\begin{definition}Let $0\leq R_1\leq R_2$ functions $n,\nu \colon X\times [R_1,R_2]\to(0,\infty)$, and  $B\subset X$.
\begin{itemize}
\item[(L)] The local regularity property $L(R_1,R_2,\nu,n) $ is satisfied in $B$, if for all $x\in B$ and $r\in[R_1,R_2]$ we have 
\[
\frac{m(B_x(r))}{m(x)}
\leq \nu_x(r)\ r^{n_x(r)}.
\]
We abbreviate $L(R_1,\nu):=L(R_1,\infty,\nu)$.
\end{itemize}
\end{definition}

\begin{remark}The property (L) can be interpreted as volume doubling from balls with large radius to balls with very small radius. It however fails to be equivalent for small radii $ r $.
\end{remark}

In order to formulate the generalized result, fix $n>0$, $R\geq0$, let $R'=288\vee\euler^{1\vee 8R}$.
 Introduce a second dimension function $n'\colon X\times [R',\infty)\to [1,\infty)$ given by
\[
n'(r)=
n\cdot  1\vee \frac{\ln r}{2\ln \left({r}/{(\ln r)}\right)}.
\]
With these notions at hand, the generalized main result for the normalizing measure reads as follows.
\begin{theorem}[normalizing measure, general]\label{thm:normalizinggeneralizedglobal}Let $m=\deg$ and $R\geq 100$.
\begin{enumerate}[(i)]
\item If there are constants $a,n>0$ such that $FK(R,a,n)$ holds in $X$, then there is a constant $C=C_{a,n}\geq1$ such that $G(R,C,n) $, $V(R,C,n) $, and $L(R,C,n) $ hold in $X$.
\item If there are constants $C,n\geq 1$ such that $G(R,C,n) $, $V(R,C,n) $, and $L(R,C,n) $ hold in $X$, then there exists a positive function $a'=a_{C,n'}>0$ such that $FK(8R',a',n') $ holds in $X$. 
\end{enumerate}
\end{theorem}

\begin{remark}[behavior of the dimension function] 
 	Classically on manifolds, the dimension $ n\geq1 $ in the Faber-Krahn inequality which is derived from Gaussian upper estimates and volume doubling coincides with the volume doubling dimension. Here, the Faber-Krahn dimension is a function $ n' $.
 	The dimension $ n $ is recovered asymptotically.
In fact, we have 
\[
\lim_{r\to\infty}\ n'_o(r)
 =n\cdot 1\vee \frac12\lim_{r\to\infty} \frac{1}{1-\frac{\ln\ln r}{\ln r}}=n \cdot 1\vee \frac{1}{2}=n.
\]
 \end{remark} 
 }

Next, we present our results for the special case of the counting measure $m=1$ and an intrinsic metric of jump size $S$. In order to obtain a characterization for the heat kernel bounds analogous to the case of normalizing measure, we have to incorporate the following local regularlity into the characterization. 

\begin{definition}Let $0\leq R_1\leq R_2$ functions $n,\nu \colon X\times [R_1,R_2]\to(0,\infty)$, and  $B\subset X$.
\begin{itemize}
\item[(L)] The local regularity property $L(R_1,R_2,\nu,n) $ is satisfied in $B$, if for all $x\in B$ and $r\in[R_1,R_2]$ we have 
\[
\frac{m(B_x(r))}{m(x)}
\leq \nu_x(r)\ r^{n_x(r)}.
\]
We abbreviate $L(R_1,\nu):=L(R_1,\infty,\nu)$.
\end{itemize}
\end{definition}

\begin{remark}The property (L) can be interpreted as volume doubling from balls with large radius to balls with vanishing radius. It however fails to be equivalent for small radii $ r $. The first appearance of such a uniform assumption can be traced back to the much stronger local regularity property used in \cite{BarlowChen-16}. 
\end{remark}

The incorporation of (L) and the possible unboundedness of the local geometry result in correction terms depending on the vertex degree. As mentioned in the introduction, the Gaussian bounds, and doubling property will be decorated with terms involving the vertex degree which vanish on large scales. In contrast, the local regularity property depends pointwise on a constant vertex degree. Further, the variable dimension function appearing in the Faber-Krahn inequality depends on the growth rate of the vertex degree. 

To phrase this behavior, we will use in the following the convention that if $f$ is a real map on $W$ being of the form $B$ or $B\times [a,b]$ for $B\in X$ and $a,b\in\RR$, then 
\[
\|f\|_W:=\sup_{W}|f|.
\]

For the next result, we introduce the following quantities.
Let $S,n>0$, $R_1,\tau,r\geq 0$, $x,y\in X$,
\[
\nu_x=\nu_{x,n}=[1\vee\deg_x]^{\frac{n}{2}},\qquad 
\Phi_x(r)=\Phi_x^n(r)=
\begin{cases}
\nu_{x}^{\theta(R_1)}R_1^n\nu_x&\colon r<R_1,\\[1.5ex]
\nu_{x}^{\theta(r)}&\colon \text{else},
\end{cases}
\quad
\Psi_{xy}(\sqrt \tau)^2=\Phi_x(\sqrt{\tau_\rho})\Phi_y(\sqrt{\tau_\rho}),
\]
where 
\[
\theta(r)=\theta(n,r)=C_\theta\cdot \left(\frac{S}{r}\right)^{\frac{1}{n+2}}, \qquad C_\theta=(288)^{\frac{1}{n+2}}\frac{n+2}{n},
\qquad
\tau_\rho:=\tau_{\rho(x,y)}=\frac{\tau}{2}\wedge \frac{S^2}{2\arsinh^2(\rho(x,y) S/\tau)}.
\]
Further, for
$C>0$, $R'= \euler^{1\vee 8R_1}$, $o\in X$, let a dimension function $n'\colon X\times [R',\infty)\to (0,\infty)$ be given by
\[
n_o'(r)=
n\cdot 1\vee \ln \left[(A(r)r)^2(1\vee \|\deg\|_{B_o(A(r)r)})\right]^{\iota(r)+\vartheta(r)},
\]
where 
\[A(r)=\exp\left(2^{19n}\euler C\cdot 1\vee\|\deg\|_{B_o(r)}^{\vartheta(r)}\right),\]
and
\[\iota(r)=\iota(n,r)=\frac{1}{2\ln \frac{r}{(\ln r)^{n+3}}},\qquad\vartheta(r)=\vartheta(n,r)=\frac{C_{n,S}}{(\ln r)^{\frac{n+3}{n+2}}}, \qquad C_{n,S}=176C_\theta S^{\frac{1}{n+2}}.\]
Finally, denote
\[ a'(r)=\big({2^{36}C^{26/n}(\ln A(r))^{2/n}}{A(r)^2}\big)^{-1}.
\]
The following  is our main result for the counting measure.
\begin{theorem}[counting measure]\label{thm:countingmainglobal}Let $m=1$. There exists $R_0=R_0(S)>0$ such that for all $R\geq R_0$
we have the following.
\begin{enumerate}[(i)]
\item If there are constants $a,n>0$ such that $FK(R,a,n)$ holds in $X$, then there is a constant $C=C_{a,n,S}\geq1$ such that $G(R,C\Psi,n)$, $L(R,C\nu,n) $, and $V(C\Phi,n)$ hold in $X$.
\item If there are constants $C,n>0$ such that $G(R,C\Psi,n)$, $L(R,C\nu,n) $, and  $V(C\Phi,n)$ hold in $X$, then we have $FK(8R',a',n')$ in $X$.
\end{enumerate}
In particular, $R_0(S)\to R_0(0)>0$ and $C_{a,n,S}\to C_{a,n,0}<\infty$ for $S\to 0$. 
\end{theorem}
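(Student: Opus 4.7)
The plan is to reduce this theorem to its localized counterparts proved in the subsequent sections of the paper, specialized to the counting measure $m=1$ and to $B = X$. Each of the two implications then amounts to propagating the degree-dependent correction functions $\nu_x$, $\Phi_x$, $\Psi_{xy}$, $n'$ through the standard heat-kernel machinery. Because $m = 1$ gives $\Deg_x = \deg_x$, every local correction can be read as a function of the combinatorial vertex degree, and the jump size $S$ of the fixed intrinsic path metric enters only through the explicit exponents $\theta$, $\iota$, $\vartheta$.

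For direction (i), I would first extract the local regularity $L(R, C\nu, n)$ from $FK(R, a, n)$ by testing the Faber-Krahn inequality against the smallest admissible sets containing $x$. This relates $m(B_x(r))/m(x)$ to $\deg_x$ and yields the factor $[1 \vee \deg_x]^{n/2}$. Volume doubling $V(C\Phi, n)$ then follows by iterating FK across dyadic scales, with the exponent $\theta(r) = C_\theta (S/r)^{1/(n+2)}$ arising as the rate at which each iteration absorbs the local degree correction, which is why $\theta(r) \to 0$ at infinity. The Gaussian bound $G(R, C\Psi, n)$ is obtained via the Nash route: FK implies a localized Nash-type inequality, Nash/Moser iteration produces an on-diagonal decay $p_t(x,x) \lesssim \Phi_x(\sqrt t)/m(B_x(\sqrt t))$, and a Davies-type perturbation with exponential weights $\euler^{\alpha \rho(x,\cdot)}$ yields the off-diagonal decay. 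The sharp $\arsinh^2(\rho S/t)$ term appears because on a jump graph the relevant logarithmic moment generator is $\cosh$ rather than a quadratic exponential, which is exactly the Pang-Davies form of $\zeta$.

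For direction (ii), I would start from the on-diagonal specialization $y = x$ of $G(R, C\Psi, n)$, giving $p_t(x,x) \leq C\Phi_x(\sqrt t \wedge R)/m(B_x(\sqrt t \wedge R))$ for $t \geq R^2$. Combined with the classical trace-type bound
\[
\lambda(U) \geq -\frac{1}{2t} \ln \bigl( m(U) \sup_{z \in U} p_{2t}(z,z) \bigr), \qquad U \subset X,
\]
and applied with $U \subset B_o(r)$, this gives a Faber-Krahn type estimate. Invoking $V$ and $L$ to convert $m(B_o(\sqrt t))$ into $m(B_o(r))$ and to absorb $m(o)$ yields a genuine Faber-Krahn inequality. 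The variable dimension $n'_o(r)$ and the coefficient $a'(r)$ arise by optimizing $t$ near the scale $r/(\ln r)^{n+3}$, with the prefactor $A(r) = \exp(2^{19n} \euler C \|\deg\|_{B_o(r)}^{\vartheta(r)})$ measuring the exponential blow-up of the correction when $\deg$ is large inside $B_o(r)$. The limit claim $R_0(S) \to R_0(0) > 0$ and $C_{a,n,S} \to C_{a,n,0} < \infty$ as $S \to 0$ is obtained by tracking $S$ explicitly through all constants and using $S^{-1}\arsinh(\rho S/t) \to \rho/t$ together with $\zeta(\rho, t) \to \rho^2/(2t)$, which recovers the usual Gaussian form.

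The main obstacle, I expect, is the scale selection in direction (ii): a one-shot application of $G$ at a fixed time $t$ only delivers a Faber-Krahn inequality with a constant inflated dimension, whereas the sharp $n'_o$ requires simultaneously optimizing over $t$, the truncation $\sqrt t \wedge R$ inside $\Psi$, and the rate $\vartheta(r)$ at which the degree correction in $\Phi$ decays. This optimization is delicate because the best choice of $t$ depends on whether $\|\deg\|_{B_o(r)}$ is polynomially or exponentially bounded in $r$, and the function $n'_o$ must interpolate between both regimes through the $\ln\ln$-type factor $\iota(r)$. In direction (i), the analogous obstacle is the Davies perturbation step: controlling the exponential moments of the intrinsic metric under the semigroup relies critically on $S < \infty$ and requires careful bookkeeping of how $S$ interacts with $\deg$ to recover the sharp $\arsinh$ form rather than a quadratic-in-$\rho$ Gaussian.
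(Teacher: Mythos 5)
Your overall architecture --- reduce to localized statements with $B=X$ and propagate the degree corrections --- is indeed how the paper proceeds (it proves this theorem as the $m=1$ case of Theorem~\ref{thm:generalmainglobal}, assembling Lemma~\ref{lem:localreg}, Theorems~\ref{thm:asympdoubling}, \ref{thm:gaussianboundclean} and \ref{thm:FKlocalregclean}), and your treatment of (L) and (V) in direction (i) matches the paper's. However, there are two genuine gaps. In direction (i) you propose the Nash/Moser route plus a classical Davies perturbation with weights $\euler^{\alpha\rho(x,\cdot)}$. On graphs this does not deliver the stated bound: the naive perturbation argument loses sharpness in the exponent (this is exactly the defect of the earlier graph results discussed at the start of Section~\ref{section:intmaxprinciple}) and does not produce the $\euler^{-\Lambda(t-t\wedge R_2^2)}$ factor. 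The paper instead derives an $\ell^2$-mean-value inequality for subsolutions directly from the localized Faber--Krahn inequality (de Giorgi--Grigor'yan iteration, Theorem~\ref{thm:mv}, which must be patched with the always-valid one-vertex step of Lemma~\ref{lem:elementarypoint} when the iteration stalls at the discrete scale --- this is where the correction $[1\vee\deg_x]^{\theta}$ enters), feeds it into an integrated maximum principle for solutions of a discrete Hamilton--Jacobi inequality (Theorem~\ref{thm:intmaxprinciple}), and recovers the sharp $\zeta$ only through the semigroup splitting at time $t/2$ and optimization over $\eta$ in Theorem~\ref{thm:split}. Localizing a Nash inequality to balls where FK holds, and surviving below the scale where intrinsic cut-offs exist, is precisely what this machinery is built to handle; your sketch assumes it away.

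In direction (ii) the trace-type bound you start from is the paper's Lemma~\ref{lem:FKapriori}, but by itself it is vacuous whenever $m(U)$ is a definite fraction of $m(B_o(r))$: the logarithm is then nonpositive for every admissible $t\in[\hat r^{2},r^{2}]$, and no optimization over $t$ repairs this because $t$ is capped by the range on which (G) and (V) are available. The indispensable step you omit is the two-case argument of Lemma~\ref{lem:choicegammanprime}: when $m(U)>\epsilon\, m(B_o(r))$ one must pass to the enlarged ball $B_o(A(r)r)$ and invoke reverse volume doubling (Lemma~\ref{lem:reversedoubling}, derived from (V) via the path-metric chaining argument) to force $m(U)\leq\epsilon\, m(B_o(A(r)r))$; the size of $A(r)$ is dictated by making $A^{\eta}\geq 2/\epsilon$ with $\eta$ the reverse-doubling exponent, which is where the $\exp(2^{19n}\euler\cdots)$ form of $A(r)$ actually comes from --- it is not merely bookkeeping for large degrees. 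Relatedly, the role of (L) here is to bound $m(B_o(A(r)r))\|\tfrac1m\|_{B_o(A(r)r)}$ so that the chosen time stays above $\hat r^{2}$ with $\hat r\sim(\ln r)^{n+3}$, and it is this constraint that forces the variable dimension $n'$ through the exponent $\iota(r)+\vartheta(r)$.
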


\begin{remark}[behavior of the correction terms and dimension function] Observe that since $\theta$ is monotone decreasing, $\Phi_{x}(r) $ and hence  $ \Psi_x(r) $  are always bounded for large $r$ in terms $\deg_x$ and converge to one as $r\to \infty$. 
\\
 	Classically on manifolds, the dimension $ n\geq1 $ in the Faber-Krahn inequality which is derived from Gaussian upper estimates and volume doubling coincides with the volume doubling dimension. Here, the Faber-Krahn dimension is a function $ n' $.
 	The dimension $ n $ can be recovered asymptotically.
In case of polynomially bounded degree, the Sobolev dimension is asymptotically increased in terms of the polynomial growth rate of the degree.
 \\
Assume  $\|\deg\|_{B_o(r)}\in O(r^k)$ as $r\to\infty$
 for some $ k\geq 0 $.
 Then we have 
 \[
n\leq \lim_{r\to\infty}\ n_o'(r)
\leq 
n\frac{2+ k}{2}.
\]
In order to see the upper estimate on the limit of $n'$, we estimate
\begin{multline*}
\lim_{r\to\infty}\ n'_o(r)=
 \lim_{r\to\infty}\ n\cdot 1\vee \ln\left[(A(r)^2r^2(1\vee \|\deg\|_{B_o(A(r)r)})\right]^{{\iota(r)}+\vartheta(r)}
\\
 \leq n\cdot 1\vee (k+2)\lim_{r\to\infty}\left[\ln(A(r))^{\iota(r)+\vartheta(r)}+\ln r^{{\iota(r)}}+\ln r^{\vartheta(r)}\right].
\end{multline*}
In order to estimate the first limit, note that $$ \|\deg\|_{B_o(r)}^{\vartheta(r)}\leq C^{\vartheta(r)} r^{k\vartheta(r)}=C^{\vartheta(r)} \exp\left(\frac{k}{(\ln r)^{\frac{1}{n+2}}}\right)\longrightarrow 1,\qquad r\to \infty,$$
such that, since $\iota(r),\theta(r)\to 0$ as $r\to\infty$, 
\[
\ln(A(r))^{\iota(r)+\vartheta(r)}\leq (\iota(r)+\vartheta(r))(2^{19n}\euler C\cdot 1\vee \|\deg\|_{B_o(r)}^{\vartheta(r)})\longrightarrow 0,\qquad r\to\infty.
\]
For the second limit we use $\iota(r)=\frac{1}{2\ln \left(\frac{r}{(\ln r)^{n+3} }\right)}=\frac12\frac{1}{\ln r-(n+3)\ln{\ln r }}$ to obtain
 \[
\ln r^{\iota(r)}
=\frac12\cdot \frac{1}{1-(n+3)\frac{\ln\ln r}{\ln r}}
\longrightarrow\frac12, \qquad r\to \infty.
 \]
 The third limit satisfies
\[
\ln r^{\vartheta(r)}
= C_{n,S}(\ln r)^{1-\frac{n+3}{n+2}}\longrightarrow 0,\qquad r\to \infty.
\]
We arrive at
\[
\lim_{r\to\infty}\ n_o'(r)
\leq 
n\cdot 1\vee(k+2)\lim_{r\to\infty}\left[\ln r^{{\iota(r)}}+\ln r^{\vartheta(r)}\right]=n\cdot 1\vee \frac{k+2}{2}=n\frac{2+k}{2}.
\]
In conclusion, if $k>0$, the dimension $n'$ may be affected by the behavior of $\|\deg\|_{B_o(r)}$. 
If $\|\deg\|_{B_o(r)}$ grows exponentially, the dimension $n'$ might become unbounded.
 \end{remark}

Finally, we present the case of general measure and intrinsic path metric with jump size $S>0$. 
\\
The existence of a variable dimension in the above result for the counting measure suggests to include this variability in both implications of the characterization. Therefore, we need to modify the error functions introduced above for the counting measure to include variable dimensions.
For $R\geq 288S$, $r> 0$, $x,y\in X$, and a dimension function $n\colon X\times [R,\infty)\to (0,\infty)$, let 
\[
\nu_{x,n(R)}=[1\vee\Deg_x]^{\frac{n_x(R)}{2}},\qquad 
\Phi_x^{n(R)}(r)=\begin{cases}
\nu_{x,n(R)}^{\theta(n_x(R), R_1)}R_1^{n_x(R_1)}\nu_{x,n(R_1)} &\colon r<R_1,\\[1.5ex]
\nu_{x,n(R)}^{\theta(n_x(R),r)} &\colon \text{else},
\end{cases}
\]
and for $\tau> 0$
\[
\Psi_{xy}(\sqrt \tau)^2=\Phi_x^{n(\sqrt\tau)}(\sqrt{\tau_\rho})\ \Phi_y^{n(\sqrt\tau)}(\sqrt{\tau_\rho}).
\]
For $R'=\euler^{1\vee 8R}$ and $C\colon[R',\infty)\to[1,\infty)$, let a second dimension function $n'\colon X\times [R',\infty)\to (0,\infty)$ be given by
\[
n_x'(r)=
\|n\|_{Q_x(R,r)}\cdot 1\vee \ln \left[A(r)^2r^2(1\vee \|\Deg\|_{B_x(A(r)r)})\right]^{\iota(\|n\|_{Q_x(R,A(r)r)},r)+\vartheta(\|n\|_{Q_x(R,A(r)r)},r)},
\]
where 
\[A(r)=\exp\left(2^{19\|n\|_{Q_x(R,A(r)r)}}\euler \|C\|_{Q_x(R,A(r)r)}\cdot 1\vee\|\deg\|_{B_o(r)}^{\vartheta(\|n\|_{Q_x(R,A(r)r)},r)}\right),\]
and where we denote space-time cylinders by $$Q_x(s,t):=B_x(t)\times [s,t],\qquad s,t\in\RR,\ x\in X.$$
Analogously to the case of the counting measure, denote
\[ a'(r)=\left(2^{36}\|C\|_{Q_x(R,A(r)r)}^{26/\|n\|_{Q_x(R,A(r)r)},r)}(\ln A(r))^{2/\|n\|_{Q_x(R,A(r)r)}}{A(r)^2}\right)^{-1}.
\]
With these notions at hand, the main result for general measures is the following.
\begin{theorem}[general measure]\label{thm:generalmainglobal}There exists $R_0=R_0(S)>0$ such that for all $R\geq R_0$
we have the following.
\begin{enumerate}[(i)]
\item If there are real functions $a,n>0$ such that $FK(R,a,n)$ holds in $X$, then there exists a real function $C=C_{a,n,S}\geq1$ such that $G(R,C\Psi,n)$, $L(R,C\nu,n) $, and $V(C\Phi,n)$ hold in $X$.
\item 
Conversely, if there are real functions $C,n>0$ such that $G(R,C\Psi,n)$, $L(R,C\nu,n) $, and $V(C\Phi,n)$ hold in $X$, then we have $FK(8R',a',n')$ in $X$. 
\end{enumerate}
In particular, $R_0(S)\to0$ and $C_{a,n,S}\to C_{a,n,0}<\infty$ for $S\to0$.
\end{theorem}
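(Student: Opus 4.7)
My plan is to deduce Theorem~\ref{thm:generalmainglobal} from the localized versions of the two implications proven later in the paper, applied with the global choice of ball $B=X$, since all hypotheses are assumed on the whole graph. The remaining global work is then to verify that the correction functions $\Phi,\Psi,\nu$ and the variable dimension $n'$ produced at the localized level coincide with the formulas displayed above, and to track the dependence of the threshold radius $R_0(S)$ and the constants $C_{a,n,S}$ on the jump size $S$.

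For (i), the implication $FK(R,a,n)\Rightarrow G+L+V$, I would adapt Grigor'yan's scheme to intrinsic metrics on graphs. First, iterate the Faber-Krahn inequality in a Nash-Moser fashion to obtain an on-diagonal bound of the form $p_t(x,x)\leq C\,\Phi_x^{n_x(\sqrt t)}(\sqrt t)/m(B_x(\sqrt t))$ for $t\geq R^2$; the $\Phi$-correction appears because converting $FK$ at scale $\sqrt t$ into a Sobolev inequality requires a volume-normalization step whose loss is controlled by $\Deg_x^{n_x(\sqrt t)/2}$. Off-diagonal bounds then come from perturbing the semigroup by the Davies-type multiplier $\euler^{\xi\rho(\cdot,y)}$ and using that intrinsicity gives $\sum_z b(x,z)(\euler^{\xi\rho(x,z)}-1)^2\leq m(x)\cdot g(\xi S)$; optimization in $\xi$ produces the $\arsinh$-type correction and hence the exact function $\zeta(\rho(x,y),t)$. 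The local regularity $(L)$ follows by evaluating the on-diagonal bound at $t=R^2$ and using $p_t(x,x)m(x)\geq \euler^{-t\Lambda}$ to lower bound the left-hand side, while $(V)$ is obtained by integrating the off-diagonal estimate against indicators of balls of different radii.

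For the converse (ii), my plan is to show that $(G)$, $(L)$, and $(V)$ yield a localized $L^1\to L^\infty$ ultracontractivity estimate at the scale $\sqrt t\wedge R_2$, which upon duality and a Davies-Gaffney-type truncation confines the mass to $B_x(r)$ for $t\lesssim r^2$. Converting this back through the Nash machinery produces a Faber-Krahn inequality $\lambda(U)\geq a'(r)r^{-2}(m(B_x(r))/m(U))^{2/n'_x(r)}$. The effective dimension $n'_x(r)$ emerges from tracking the scale-matching step: the corrections $\Phi,\Psi,\nu$ contribute additional logarithmic defects of size $\iota(\|n\|,r)+\vartheta(\|n\|,r)$ to the exponent, while the constant $A(r)$ absorbs the exponential prefactors coming from $(G)$ evaluated on the larger cylinder $Q_x(R,A(r)r)$; this explains the nested structure of the formula for $n'_x(r)$.

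The main obstacle I anticipate lies in the implicit nature of $n'$ and $A(r)$ in the reverse direction: since $A(r)$ depends on $\|n\|$ on $B_x(A(r)r)$ and vice versa, the definition is self-referential and requires a monotonicity or fixed-point argument to be well-posed, together with a precise accounting showing that this choice of $A(r)$ truly absorbs the exponential loss in the Davies-Gaffney truncation. A secondary but delicate point is the qualitative continuity claim $R_0(S)\to 0$ and $C_{a,n,S}\to C_{a,n,0}<\infty$ as $S\to 0$, which requires checking that every $\arsinh$-based remainder reduces continuously to the strongly local $r^2/(2t)$ regime, uniformly in the other parameters.
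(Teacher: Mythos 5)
Your meta-strategy --- assembling the global theorem from localized implications --- is exactly what the paper does (its proof of Theorem~\ref{thm:generalmainglobal} simply invokes Lemma~\ref{lem:localreg}, Theorems~\ref{thm:asympdoubling}, \ref{thm:gaussianboundclean} and \ref{thm:FKlocalregclean}), but two of the localized implications you sketch for part (i) would not go through. You propose to obtain (L) from the on-diagonal bound together with $p_t(x,x)m(x)\geq \euler^{-t\Lambda}$; that lower bound is wrong --- spectral calculus and Jensen give $p_t(x,x)m(x)\geq \euler^{-t\Deg_x}$, the exponent being the Rayleigh quotient of $\Eins_{\{x\}}$, not $\Lambda$ --- and even the corrected version, evaluated at $t=R^2$, yields a correction of order $\euler^{R^2\Deg_x}$, exponentially worse than the required $\nu_x=(1\vee\Deg_x)^{n_x/2}$ in $L(R,C\nu,n)$. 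The paper gets the right form by testing (FK) directly with $\Eins_{\{o\}}$, giving $\lambda(\{o\})\leq 4\Deg_o$ and hence $m(B_o(R))/m(o)\leq (4/a)^{n/2}\Deg_o^{n/2}R^n$. More seriously, you propose to derive (V) "by integrating the off-diagonal estimate against indicators of balls": upper Gaussian bounds alone do not imply volume doubling (otherwise (V) would be redundant in the equivalence, contradicting the structure of Grigor'yan's theorem); integrating the upper bound over balls only yields reverse-type volume comparisons. The paper instead derives (V) directly from (FK) by inserting the intrinsic-metric cut-offs of Proposition~\ref{prop:cutoff} into the Rayleigh quotient and iterating, the iteration stopping at scale $S$ and producing exactly the correction $\Phi$.

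For the converse (ii), your ultracontractivity/Nash outline is plausible in spirit (the paper uses the equivalent trace argument $\euler^{-\lambda(U)t}\leq\sum_U m\,p_t^U(\cdot,\cdot)\leq\sum_U m\,p_t(\cdot,\cdot)$ of Lemma~\ref{lem:FKapriori}), but you omit the reverse volume doubling step (Lemma~\ref{lem:reversedoubling}, proved by a chaining argument using that $\rho$ is a path metric): without it one cannot handle subsets $U$ with $m(U)$ comparable to $m(B_o(r))$, and this is precisely where the enlarged ball $B_o(A(r)r)$, and hence the function $A(r)$, enters. You also do not explain how (L) is used to convert the unavoidable error $\|\tfrac1m\|_{B}\,m(B)$ into the variable dimension $n'$, which is the key new idea of Theorem~\ref{thm:fixeddimension}. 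You correctly flag the self-referential definition of $A(r)$ and $n'$ and the $S\to0$ continuity as delicate, but these remain unresolved in your plan.
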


\subsection{Strategy of the proofs}
Our results follow from variants of the localized versions of the implications of Theorem~\ref{thm:generalmainglobal}, which contain even more precise statements and allow for a larger range of parameters. The proofs of these theorems are purely analytic and greatly expand on the general strategy in \cite{Grigoryan-94} using a variety of ideas. 
\\

We start with 
the implication (FK) $\Rightarrow$ (L) $\&$ (V) $\&$ (G).

Lemma~\ref{lem:localreg} contains the simple implication (FK) $\Rightarrow$ (L). Local regularity follows from the Faber-Krahn inequality by plugging in a characteristic function of a vertex.

Theorem~\ref{thm:asympdoubling} shows (FK) $\Rightarrow$ (V). By plugging in suitable test functions we can use the classical iteration argument found in, e.g., \cite{SaloffC-01}, cf.~\cite{KellerRose-24}. Test functions with a suitable control on their gradients are provided by the existence of the intrinsic metric. The iteration argument works as long as the supports of the gradients of the test functions shrink in each iteration step. It stops due to  the discrete structure of the space, resulting in an error incorporated in the doubling constant. By controlling this error via the local regularity property (L) it only depends on the vertex degree of the center of the ball. On large scales, this error converges to one. Our proofs of Lemma~\ref{lem:localreg} and Theorem~\ref{thm:asympdoubling} can be found in Section~\ref{section:regularitydoubling}.

Theorem~\ref{thm:gaussianboundclean} implements the implication (FK) $\Rightarrow$ (G). Its proof has three main ingedredients: an $\ell^2$-mean value inequality, a new integrated maximum principle, and their combination via a new variant of Davies' method.
\\
An $\ell^2$-mean value inequality for positive parts of subsolutions of the heat equation is derived in Section~\ref{section:tas}. The proof splits into the following subsections according to the steps of the proof.
\\
Subsection~\ref{subsection:maximalinequalities} proves maximal inequalities for positive parts of subsolutions of the heat equation. This done by combining \cite[Lemma~2.2]{KellerRose-22a}, which uses the intrinsic metric property, and an adapted simplified version of \cite[Lemma~4.2]{GrigoryanHH}.
\\
Subsection~\ref{subsection:deGiorgielementary} is dedicated to the derivation of  elementary de Giorgi-Grigor'yan iteration steps based on an elaboration on \cite{GrigoryanHH}. Lemma~\ref{lem:elementarystep} shows that the combination of a good cut-off function provided by the intrinsic metric, the Faber-Krahn inequality, and the above maximal inequality suffices to obtain the iteration step for large enough balls. Additionally, the maximal inequality also leads to a $\ell^2$-mean value inequality which always holds on graphs even without (FK), see Lemma~\ref{lem:elementarypoint}.
\\
Subsection~\ref{lem:elementarypoint} provides the mean value inequality by Grigor'yan's version of de Giorgi iteration by using Lemma~\ref{lem:elementarystep} and Theorem~\ref{thm:asympdoubling} assuming Faber-Krahn inequalities. Due to the discrete structure and the lack of cut-off functions with controlled gradient for small radii, the iteration procedure naturally stops after a certain amount of steps. 
We derive an estimate for powers of norms of positive parts of subsolutions in a ball in terms of the norm in an enlarged ball, where the power depends on the number of iteration steps. In order to obtain the desired mean value inequality, we must use the estimate from Lemma~\ref{lem:elementarypoint} which is always satisfied on graphs with intrinsic metric. This  results in an error for the constant in the mean value inequality depending on the inverse of the measure of the considered vertex and the constant in the Faber-Krahn inequality. In particular, the error becomes small if the radius is large.

Section~\ref{section:intmaxprinciple} provides a new integrated maximum principle for graphs, Theorem~\ref{thm:intmaxprinciple}. If Lipschitz functions satisfy a new discretized version of the Hamilton-Jacobi inequality suitable for graphs, we can perform the calculations in the Davies-Gaffney-Grigor'yan lemma, cf.~\cite{Grigoryan-09,BauerHuaYau-17}. Incorporating properties of the intrinsic metric viz ideas from \cite{BauerHuaYau-17} eventually allows to derive the sharp Gaussian for the heat kernel.

Section~\ref{section:weightedl2norm}
deals with graph versions of weighted $\ell^2$-norms of the heat kernel in terms of specific solutions of the discretized Hamilton-Jacobi equation provided in Section~\ref{section:intmaxprinciple}. We bound these norms by combining our new integral maximum principle for these solutions with the mean-value inequalities derived from the Faber-Krahn inequality. In order to obtain a reasonable bound for all times, we again have to modify the argument by incorporating a discrete version of the mean value inequality which is satisfied on any graph.

Section~\ref{section:FKhkb} provides localized and optimal Gaussian upper heat kernel bounds assuming relative Faber-Krahn inequalities in large balls, cf.~Theorem~\ref{thm:gaussianboundclean}. Especially, the bottom of the spectrum of the Laplacian is involved in the upper bound. The proof is based on Theorem~\ref{thm:split}, a new combination of Davies' method \cite{Davies-87} and the upper bound of the heat kernel in terms of weighted $\ell^2$-norms.
In fact, choose initial solutions of the Hamilton-Jacobi equation in Section~\ref{section:weightedl2norm} and bound the heat kernel by the associated weighted $\ell^2$-norms. Optimization over the weight provides the optimal Gaussian. Finally, we apply the results from Section~\ref{section:weightedl2norm} to conclude and use the local regularity property (L) to obtain an error depending only on the vertex degree.
\\

The implication (L) $\&$ (V) $\&$ (G) $\Rightarrow$ (FK) is the content of  Theorem~\ref{thm:FKlocalregclean}. Its proof is the content of Section~\ref{section:gausstoFK} and follows the steps below.

We start with the derivation of the reverse volume doubling property on large balls from (V), Lemma~\ref{lem:reversedoubling} by a well-known chaining argument. Note that this is the only part of the proof which requires the intrinsic metric to be a path metric.

Next, we provide a  variant of a general lower bound for the spectral bottom of the Dirichlet Laplacian of bounded subsets following (V) and (G), Lemma~\ref{lem:FKapriori}. Due to the restricted range of radii and times where (V) and (G) hold, this lower bound depends on an unpleasant error in terms of reciprocals of measures. This unpleasant error is still good enough in very bounded situations such as the normalizing measure, cf.~Theorem~\ref{thm:normalizedclassicalFK}. However, allowing for more unboundedness we need a new idea which is the choice of a variable dimension and the incorporation of the local regularity property (L). This choice mitigates the possibly unbounded error terms for large radii, cf.~Theorem~\ref{thm:fixeddimension} which still includes a free parameter which is chosen later to prove Theorems~ \ref{thm:generalmainglobal} and in turn Theorem \ref{thm:countingmainglobal}. 
\\

Finally, we give the proofs of the main theorems. Theorem~\ref{thm:normalizedmain} is the global version of Corollaries~\ref{cor:asympdoublingnormalized},~\ref{cor:gaussianboundcleannormalized}, and Theorem~\ref{thm:normalizedclassicalFK}. Theorem~\ref{thm:generalmainglobal} is the global version of Theorems~\ref{thm:asympdoubling},~\ref{thm:gaussianboundclean}, and~\ref{thm:FKlocalregclean}. Theorem~\ref{thm:countingmainglobal} is the specialization of Theorem~\ref{thm:generalmainglobal} to the counting measure.

\section{Faber-Krahn, local regularity and doubling}\label{section:regularitydoubling}
In this section, we show that the Faber-Krahn inequality (FK) on large balls yields the local regularity property (L) and the volume doubling property (V) on large balls. The error incorporating the local geometry on graphs will be given in terms of the function
\[
\nu_o(n_o(R))=
[1\vee \Deg_o]^{\frac{n_o(R)}{2}}.
\]

First, we derive property (L) from (FK). As usual, in what follows we use the convention
\[
\sum_Um f:=\sum_{x\in U}m(x)f(x),\quad f\in\cC(U),\ U\subset X.
\]
\begin{lemma}[Local regularity]\label{lem:localreg} Let $o\in X$ and constants $a,n>0$, $R\geq S$ such that $FK(R,R,a,n)$ holds in $o$. 
Then there is a constant $C=C_{a,n}>0$ such that $L(R,R,C\nu,n)$ holds in $o$.
More generally, if there are $R_2\geq R_1\geq S$ and functions $a,n\colon [R_1,R_2]\to (0,\infty)$ such that $FK(R_1,R_2,a,n)$ holds in $o$, 
then there is a function $C=C_{a,n}>0$ such that $L(R_1,R_2,C\nu, n)$ holds in $o$. 
\end{lemma}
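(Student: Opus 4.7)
The plan is to test the Faber-Krahn inequality against the indicator of the single vertex $o$.

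I would fix $r \in [R_1, R_2]$ and take $U = \{o\}$ in the hypothesis $FK(R_1,R_2,a,n)$; this is admissible because $o \in B_o(r)$. Every nonzero $f \in \cC(\{o\})$ is a scalar multiple of $\mathbf{1}_o$, and a direct computation gives $\Delta \mathbf{1}_o(o) = \Deg_o$ together with $\|\mathbf{1}_o\|^2 = m(o)$. The Rayleigh quotient defining $\lambda$ therefore evaluates to
\[
\lambda(\{o\}) \;=\; \Deg_o.
\]

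Inserting this into (FK) and solving for the volume ratio,
\[
\Deg_o \;\geq\; \frac{a_o(r)}{r^2}\left(\frac{m(B_o(r))}{m(o)}\right)^{2/n_o(r)}
\qquad\Longleftrightarrow\qquad
\frac{m(B_o(r))}{m(o)} \;\leq\; \left(\frac{\Deg_o}{a_o(r)}\right)^{n_o(r)/2}\! r^{n_o(r)}.
\]
Since $\Deg_o^{n_o(r)/2} \leq (1 \vee \Deg_o)^{n_o(r)/2} = \nu_o(r)$ and $C_o(r) := a_o(r)^{-n_o(r)/2}$ depends only on $a$ and $n$, this yields
\[
\frac{m(B_o(r))}{m(o)} \;\leq\; C_o(r)\,\nu_o(r)\, r^{n_o(r)},
\]
which is exactly $L(R_1,R_2,C\nu,n)$ in $o$. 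When $a$ and $n$ are constants, so is $C$; in the general case $C = C_{a,n}$ is read pointwise as the stated function.

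No serious obstacle is expected: the lemma is the standard consequence of feeding a one-point test function into a Faber-Krahn inequality. The only small items of care are verifying that $\{o\} \subset B_o(r)$ (automatic from $\rho(o,o)=0$) and tracking whether the resulting $C$ is a genuine constant or a function, according to whether the data $a$ and $n$ are constants or not.
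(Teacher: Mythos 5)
Your proof is correct and follows essentially the same route as the paper: both test (FK) against the indicator of the single vertex $\{o\}$ and solve for the volume ratio; your exact evaluation $\lambda(\{o\})=\Deg_o$ is in fact slightly sharper than the paper's crude bound of the Rayleigh quotient by $4\Deg_o$, yielding $C=a^{-n/2}$ in place of the paper's $(4/a)^{n/2}$.
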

\begin{proof}
Since $R\geq S$ we can apply the Faber-Krahn inequality to $U=\{o\}$ and obtain
\[
\frac{a}{R^2}\left(\frac{m(B_o(R))}{m(o)}\right)^{\frac{2}{n}}
\leq \frac{\sum_{B_o(R)} m\vert \nabla \Eins_{\{o\}}\vert^2}{\sum_{B_o(R)} m\Eins_{\{o\}}^2}\leq 4\Deg_o,
\]
what implies the first claim with $C=\left({4}/{a}\right)^{\frac{n}{2}}$. The general statement is an immediate consequence.
\end{proof}

In the special case of the normalizing measure, Lemma~\ref{lem:localreg} reduces to the following.

\begin{corollary}[Local regularity normalizing measure]
\label{cor:localregnormalized}Let $m=\deg$, $o\in X$ and constants $a,n>0$, 
$R_2\geq R_1\geq S$, such that $FK(R_1,R_2,a,n)$ holds in $o$.
Then there is a constant $C=C_{a,n}>0$ such that $L(R_1,R_2,C, n)$ holds in $o$. 
\end{corollary}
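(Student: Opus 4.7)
The plan is to obtain this as an immediate specialization of Lemma~\ref{lem:localreg} to the case $m=\deg$. The key point is that for the normalizing measure, the weighted vertex degree is trivial: for every $x\in X$,
\[
\Deg_x=\frac{\deg_x}{m(x)}=\frac{\deg_x}{\deg_x}=1.
\]
Consequently the correction function $\nu_x(n_x(R))=[1\vee\Deg_x]^{n_x(R)/2}$ introduced at the start of Section~\ref{section:regularitydoubling} is identically equal to $1$, independent of $R$ and of the dimension function $n$.

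Now I apply Lemma~\ref{lem:localreg} to $o$: the hypothesis $FK(R_1,R_2,a,n)$ in $o$ delivers a constant (in the case of constant $a,n$) or function (in the general case) $C=C_{a,n}>0$ such that $L(R_1,R_2,C\nu,n)$ holds in $o$. Substituting $\nu\equiv 1$ the bound
\[
\frac{m(B_o(r))}{m(o)}\leq C_{a,n}\,\nu_o(n_o(r))\,r^{n_o(r)}
\]
collapses to $m(B_o(r))/m(o)\leq C_{a,n}\,r^{n_o(r)}$ for every $r\in[R_1,R_2]$, which is precisely $L(R_1,R_2,C,n)$ with the constant factor $C=C_{a,n}$.

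There is essentially no obstacle here; the only thing to keep in mind is that the statement is phrased both for constant $a,n$ and more generally (matching the two statements of Lemma~\ref{lem:localreg}), so the argument should be written so it covers both: the normalization $\Deg\equiv 1$ holds pointwise on $X$ and therefore removes the $\nu$-factor irrespective of whether $a$ and $n$ are constants or functions on $[R_1,R_2]$. Alternatively, one could give a one-line direct proof by testing the Faber-Krahn inequality in Lemma~\ref{lem:localreg} with $\Eins_{\{o\}}$ and noting that in the normalizing case the resulting upper bound $4\Deg_o$ equals $4$, yielding $C_{a,n}=(4/a)^{n/2}$ (or $(4/a(r))^{n(r)/2}$ pointwise in $r$) directly.
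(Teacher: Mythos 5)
Your proposal is correct and matches the paper's proof exactly: the paper also obtains the corollary as an immediate consequence of Lemma~\ref{lem:localreg} together with the observation that $\Deg_o=1$ for the normalizing measure, which makes the correction factor $\nu$ trivial.
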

\begin{proof}
This is an immediate consequence of Lemma~\ref{lem:localreg} and $\Deg_o=1$.
\end{proof}

Next, we turn to the derivation of the volume doubling property (V) from (FK).
A key ingredient in everything what follows is the existence of cut-off functions with respect to intrinsic metrics. For such a metric $ \rho $, $ A\subset X $, and $ R\ge0 $ we let  $ B_A(R)=\{x\in X\colon \inf_{a\in A}\rho(x,a)\leq R \} $.
\begin{proposition}[{\cite[Proposition~11.29]{KellerLW-21}}]\label{prop:cutoff} Assume $A\subset X$, $R\geq 0$, and set
\[
\phi_{A,R}=\left(1-\frac{\rho(\cdot,A)}{R}\right)_+.
\]
Then, we have $1_{A}\leq \phi_{A,R} \leq 1_{B_{A}(R)}$ and 
\[
\Vert \vert \nabla\phi_{A,R}\vert\Vert_\infty^2\leq \frac1{R^2}.
\]
\end{proposition}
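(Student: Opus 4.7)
The plan is to verify the two pointwise bounds directly from the definition and then to derive the gradient estimate by combining the 1-Lipschitz nature of $(\cdot)_+$ and of the distance-to-set function with the defining inequality of an intrinsic metric.

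First I would check the sandwich bound $\Eins_A\leq \phi_{A,R}\leq \Eins_{B_A(R)}$. For $x\in A$ we have $\rho(x,A)=0$, so $\phi_{A,R}(x)=1$; for $x\notin B_A(R)$ we have $\rho(x,A)>R$, the argument of the positive part is negative, and $\phi_{A,R}(x)=0$; in all remaining vertices $\phi_{A,R}$ takes values in $[0,1]$, which is dominated by $\Eins_{B_A(R)}$.

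For the gradient bound, I would first note the reverse triangle inequality
\[
|\rho(x,A)-\rho(y,A)|\leq \rho(x,y), \qquad x,y\in X,
\]
which follows by taking the infimum over $a\in A$ in $\rho(y,a)\leq \rho(y,x)+\rho(x,a)$ and symmetrising. Combined with the fact that $t\mapsto (1-t/R)_+$ is $1/R$-Lipschitz, this yields the pointwise estimate
\[
(\phi_{A,R}(x)-\phi_{A,R}(y))^2\leq \frac{\rho(x,y)^2}{R^2}.
\]
Plugging this into the definition of $|\nabla \phi_{A,R}|^2(x)$ and using the intrinsic metric inequality $\sum_{y\in X}b(x,y)\rho(x,y)^2\leq m(x)$ gives
\[
|\nabla \phi_{A,R}|^2(x)\leq \frac{1}{R^2m(x)}\sum_{y\in X}b(x,y)\rho(x,y)^2\leq \frac{1}{R^2},
\]
uniformly in $x\in X$, which is the claim.

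There is really no obstacle here: the statement is essentially the defining virtue of intrinsic metrics, which are tailored so that distance-type cut-offs behave exactly as in the Riemannian setting. The only point that requires a moment of care is the Lipschitz bound for $\rho(\cdot,A)$, but this is immediate from the triangle inequality of the pseudo-metric.
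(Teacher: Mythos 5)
Your proof is correct and is the standard argument: the paper does not prove this proposition but cites it from \cite[Proposition~11.29]{KellerLW-21}, and the canonical proof there proceeds exactly as you do, via the $1$-Lipschitz property of $\rho(\cdot,A)$, the $1/R$-Lipschitz property of $t\mapsto(1-t/R)_+$, and the defining inequality $\sum_y b(x,y)\rho(x,y)^2\leq m(x)$ of an intrinsic metric. Nothing is missing (the case $R=0$ is vacuous since the right-hand side is then infinite).
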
 
In order to derive (V) in our general context, let $0\leq R_1\leq R_2$ and 
for $n\colon [R_1,R_2]\to (0,\infty)$ and $o\in X$, recall for $0<r\leq R\leq R_2$
\[
\Phi_o^{n(R)}(r)=
\begin{cases}
\nu_{o,n(R)}^{\theta(n(R), R_1)}R_1^{n(R_1)}\nu_{o,n(R_1)} &\colon r<R_1,\\[1.5ex]
\nu_{o,n(R)}^{\theta(n(R),r)} &\colon \text{else},
\end{cases}
\qquad
 \theta(n,r)=\frac{n+2}{n}\cdot 1\wedge \left(\frac{288S}{r}\right)^{\frac{1}{n+2}}.
\]

\begin{theorem}\label{thm:asympdoubling}
Assume that there are $o\in X$ and constants $n>0$, $a\in(0,1]$ and $R\geq 2S$ such that $FK(R,R,a,n)$ holds in $o$.
Then there is a constant $C=C_{a,n,S}>0$ such that for all $r\in[2S,R]$, we have 
\begin{align*}
\frac{m(B_o(R))}{m(B_o(r))}\leq C\Phi_o^n(r) \left(\frac{R}{r}\right)^{n}.
\end{align*}
More generally, if $R_2\geq R_1\geq 2S$, $n\colon [R_1,R_2]\to (0,\infty)$, and $a\colon [R_1,R_2]\to (0,1]$ such that $FK(R_1,R_2,a,n)$ holds in $o$, then there is a function $C=C_{a,n,S}>0$ such that  $V(R_1,R_2,C\Phi,n) $ holds in $o$.
\end{theorem}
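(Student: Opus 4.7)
The strategy follows the classical Moser--Saloff-Coste iteration \cite{SaloffC-01}, adapted to graphs with intrinsic metrics as in \cite{KellerRose-24}. The main analytic tool is the cutoff function $\phi_{B_o(r'), r-r'}$ from Proposition~\ref{prop:cutoff}, which equals $1$ on $B_o(r')$, is supported in $B_o(r)$, and satisfies $\||\nabla\phi|\|_\infty \leq 1/(r-r')$. Pairing the Faber--Krahn lower bound on $\lambda(B_o(r))$ (from $FK(R,R,a,n)$ applied with test set $U = B_o(r) \subset B_o(R)$) against the Rayleigh-quotient upper bound coming from this cutoff yields, for any $r' < r \leq R$ with $r - r' \geq 2S$, the key recursive inequality
\[
\left(\frac{m(B_o(R))}{m(B_o(r))}\right)^{2/n} \leq \frac{R^2}{2a(r-r')^2}\cdot\frac{m(B_o(r))}{m(B_o(r'))}.
\]

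The plan is to iterate this inequality along a sequence $r_0 < r_1 < \cdots < r_N$ with $r_0 \approx 2S$ and $r_N$ close to $R$, telescoping the quotients $m(B_o(r_{k+1}))/m(B_o(r_k))$ to estimate $m(B_o(R))/m(B_o(r))$ in terms of $(R/r)^n$. The recurrence has contraction rate $\alpha = n/(n+2) \in (0,1)$, arising from the $2/n$ versus $1$ asymmetry of the exponents, so that a residual survives to the power $\alpha^N$ after $N$ iterations. At the terminal scale $r_0 \sim S$, below which the cutoff carries no useful gradient control, Lemma~\ref{lem:localreg} bounds $m(B_o(r_0))$ in terms of $m(o)$ and $\nu_o = (1 \vee \Deg_o)^{n/2}$, contributing a factor $\nu_o^{\alpha^N}$ to the final bound. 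Optimizing $N$ and the step sizes against $r$ then produces the correction $\Phi_o^n(r) = \nu_o^{\theta(n,r)}$ with $\theta(n,r) \approx C_\theta (S/r)^{1/(n+2)}$; the exponent $1/(n+2)$ encodes the balance between the contraction rate $\alpha$ and the logarithmic cost per iteration step coming from the $R^2/(r-r')^2$ factor.

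The principal obstacle is the discreteness of the underlying space: unlike in continuous settings, the iteration cannot be driven to arbitrarily small scales, which is why one must accept a residual at the terminal scale and control it by local regularity. The resulting correction $\Phi_o^n(r)$ tends to $1$ as $r \to \infty$ (since $\theta(n,r) \to 0$), so that the classical polynomial doubling $m(B_o(R))/m(B_o(r)) \leq C(R/r)^n$ is recovered asymptotically. The variable-coefficient generalization follows by running the same iteration with $a$ and $n$ evaluated at each scale, noting that the hypothesis $FK(R_1, R_2, a, n)$ provides Faber--Krahn inequalities throughout $[R_1, R_2]$ with $R_1 \geq 2S$, so the terminal step is again handled by Lemma~\ref{lem:localreg}.
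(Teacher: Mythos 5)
Your proposal follows essentially the same route as the paper: a Faber--Krahn/Rayleigh-quotient comparison using the intrinsic-metric cutoff yields the same recursive inequality with contraction rate $n/(n+2)$, which is iterated down to the scale $S$ (the paper uses the explicit sequence $r/4^{k}$), and the terminal residual raised to the power $q^{k_0}$ is absorbed via Lemma~\ref{lem:localreg}, producing exactly the correction $\nu_o^{\theta(n,r)}$ with $\theta(n,r)\sim (S/r)^{1/(n+2)}$. The only cosmetic difference is that the paper applies local regularity to the ratio $m(B_o(R))/m(o)$ rather than to the terminal ball itself, which is an equivalent rearrangement.
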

\begin{proof}Let $r\leq R$ and abbreviate $B(r):=B_o(r)$, $m(r):=m(B_o(r))$. 
Choose
\[
\phi=\phi_0=(r-S-\rho(o,\cdot))_+.
\]
Proposition~\ref{prop:cutoff} yields $\supp \phi_0\subset B(r-S)
\subset B(r)$, $\vert\nabla\phi_0\vert\leq 1$, and $\supp \vert\nabla\phi_0\vert\subset B(r)$.
Since $2S\leq r$ we have $(r-S)/2\geq (r-r/2)/2=r/4 $. Thus, we get
\[
\sum_{B(r)}m\phi_0^2\geq \left(\frac{r-S}{2}\right)^2m((r-S)/2)\geq\left(\frac{r}{4}\right)^2m(r/4). 
\]
By the definition of the first Dirichlet eigenvalue we obtain
\[
\lambda(B(r))=\inf_{\supp \phi\in B(r)}\frac12\frac{\sum_{B(r)} m\vert \nabla \phi\vert^2}{\sum_{B(r)} m\phi^2}\leq \frac{\sum_{B(r)} m\vert \nabla \phi_0\vert^2}{\sum_{B(r)} m\phi_0^2}
\leq \left(\frac{4}{r}\right)^2\frac{m(r)}{m(r/4)}.
\]
The Faber-Krahn inequality $FK(R,R,a,n)$ applied to $U=B(r)$ yields 
\[
\frac{a}{R^2}\left(\frac{m(R)}{m(r)}\right)^{\frac{2}{n}}
\leq \lambda (B(r))\leq \left(\frac{4}{r}\right)^2\frac{m(r)}{m(r/4)}=\frac{2^4}{r^2}\frac{m(r)}{m(r/4)},
\qquad\text{i.e.,}\qquad
\left(\frac{a}{2^4}\right)^q\left(\frac{r}{R}\right)^{2q}
m(R)^{{\frac{2}{n}} q} 
m(r/4)^{q}
\leq 
m(r).
\]
where we set $q:=1/(1+{\frac{2}{n}})$.
If we set $k_0=\lfloor \log_2\sqrt{r/S}\rfloor$, we have 
\[
\frac{r}{4^{k_0}}\geq \frac{r}{4^{ \log_2\sqrt{r/S}}}=S.
\]
Hence, iterating the above inequality for $k\in\{0,\ldots, k_0\}$ yields
\[
\left(\frac{a}{2^4}\right)^{\sum_{i=1}^{k_0}q^i}
\left(\frac{r}{R}\right)^{2{\sum_{i=1}^{k_0}q^i}}
m(R)^{{\frac{2}{n}}\sum_{i=1}^{k_0}q^i}
m(r/4^{k_0})^{q^{k_0}}
\leq m(r).
\]
We estimate the factors on the left-hand side and start with the evaluation of the sum appearing in the exponents. Since $q/(1-q)=n/2$, we have
\[
\sum_{i=1}^{k_0}q^i
=
\sum_{i=0}^{k_0}q^i -1
=
\frac{q(1-q^{k_0})}{1-q}
=
\frac{n}2(1-q^{k_0}).
\]
Since  $(1-q^{k_0})\geq 0$ and $a\leq 1$, this yields for the first two factors 
\[
\frac{a^{\frac{n}{2}}}{2^{2n}}
\leq \left(\frac{a}{2^4}\right)^{\frac{n}{2}(1-q^{k_0})}=\left(\frac{a}{2^4}\right)^{\sum_{i=1}^{k_0}q^i}\qquad\text{and}\qquad
\left(\frac{r}{R}\right)^{n}
\left(\frac{R}{r}\right)^{nq^{k_0}}
=\left(\frac{r}{R}\right)^{n\left(1-q^{k_0}\right)}
=\left(\frac{r}{R}\right)^{2{\sum_{i=1}^{k_0}q^i}}.
\]
Since $r/4^{k_0}\geq S$ we also have $m(r/4^{k_0})\geq m(o)$. Applying these relations, we obtain
\[
\frac{a^{\frac{n}{2}}}{2^{2n}}
\left(\frac{r}{R}\right)^{n}\left(\frac{R}{r}\right)^{nq^{k_0}}
m(R)^{1-q^{k_0}}
m(o)^{q^{k_0}}
\leq m(r),
\qquad\text{
i.e.,}
\qquad
\frac{m(R)}{m(r)}
\leq 
\frac{2^{2n}}{a^{\frac{n}{2}}}
\left(\frac{R}{r}\right)^{n}
\left(
r^n\frac{m(R)}{m(o)R^n}
\right)^{q^{k_0}}.
\]
By Lemma~\ref{lem:localreg}, $FK(R,R,a,n)$ in $o$ implies $L(R,R,C\nu,n)$ in $o$ with $C=\left(\frac{4}{a}\right)^{\frac{n}{2}}$, i.e.,
\[
\frac{m(R)}{m(o)}
\leq \left(\frac{4}{a}\right)^{\frac{n}{2}}\nu_o(R)\ R^n.
\]
Plugging this inequality into the above estimate on $m(R)/m(r)$,  we arrive at
\[
\frac{m(R)}{m(r)}
\leq 
\frac{2^{2n}}{a^{\frac{n}{2}}}
\left(\frac{R}{r}\right)^{n}
\left(
r^n\left(\frac{4}{a}\right)^{\frac{n}{2}}\nu_o(R)
\right)^{q^{k_0}}
=\frac{2^{2n+nq^{k_0}}}{a^{\frac{n}{2}(1+q^{k_0})}}
r^{nq^{k_0}}\nu_o(R)^{q^{k_0}}
\left(\frac{R}{r}\right)^{n}
.
\]
Using $k_0=\lfloor \log_2\sqrt{r/S}\rfloor\geq \log_2\sqrt{r/S}-1$ and $q=n/(n+2)\leq 1$, we have 
$$q^{k_0}\leq q^{\log_2\sqrt{r/S}-1}
=\frac{1}{q}\left(\frac{S}{r}\right)^{\log_2\sqrt{\frac1q}}=\frac{n+2}{n}\left(\frac{S}{r}\right)^{\log_2\sqrt{\frac{n+2}n}}.
$$
The mean value theorem and $\ln2\le1$ yield
\[
\log_2\sqrt{(n+2)/n}=\frac{1}{2\ln2}(\ln(n+2)-\ln n)\geq
\inf_{x\in[n,n+2]}\frac{1}{x}=\frac{1}{n+2},
\]
such that, since $r\geq 2S$ implies $S/r\leq 1$, we get
\[
q^{k_0}\leq \frac{n+2}{n}\left(\frac{S}{r}\right)^{\frac{1}{n+2}}\leq \frac{n+2}{n}\cdot 1\wedge \left(\frac{288S}{r}\right)^{\frac{1}{n+2}}=\theta(n,r).
\]
Setting $f(x)=x^{\left(1/x\right)^{\log_2\sqrt{\frac1q}}} $, we obtain, using $n/q=n(n+2)/n=n+2$,
\[
r^{nq^{k_0}}\leq1\vee  r^{\frac{n}{q}\left(\frac{S}{r}\right)^{\log_2\sqrt{\frac1q}}}\leq 1\vee S^{(n+2)\left(\frac{S}{r}\right)^{\log_2\sqrt{\frac1q}}} f(r/S)^{(n+2)}.
\]
Since $r\geq S$, we have $S^{(n+2)\left(\frac{S}{r}\right)^{\log_2\sqrt{\frac1q}}}\leq (1\vee S)^{n+2}$.
The function $f$ attains its maximum at the value $x_0=\exp(1/\log_2\sqrt{1/q})$ with maximum $f(x_0)=\exp(1/(\euler\log_2\sqrt{1/q}))=2^{2/(\euler\ln(1/q))}\leq 2^{1/\ln(1/q)}\leq 2^{n+2}$, where we used the mean value theorem in the last step.
Therefore, we have 
\[
r^{q^{k_0}}\leq (1\vee S)^{n+2}2^{(n+2)^2}=(1\vee S)^{n+2}2^{n^2+4n+4}.\]
Plugging this estimate into the above bound on $m(R)/m(r)$, using $a,q\leq 1$, and recalling the definition $m(r)=m(B(r))$ yields the result with $C_{a,n,S}=(1\vee S)^{n+2}\left(\frac{2}{a}\right)^{n^2+7n+4}$. The general statement follows right away with $C_{a,n,S}=(1\vee S)^{n(R)+2}\left(\frac{2}{a}\right)^{n(R)^2+7n(R)+4}$.
\end{proof}

Gluing together properties (L) and (V) for a restricted amount of radii leads to property (V) on all small enough scales.
\begin{lemma}\label{lem:fulldoubling}Let $o\in X$, $R_2\geq R_1\geq 2S$, $n\colon [R_1,R_2]\to (0,\infty)$, and functions $C_1,C_2\colon [R_1,R_2]\to(0,\infty)$ such that $V(R_1,R_2,C_1\Phi,n) $ and $L(R_1,R_2,C_2\nu,n)$ hold in $o$. Then there exists a function $C'=C'_{C_1,C_2}>0$ such that $V(R_2,C'\Phi,n)$ holds in $o$.
\end{lemma}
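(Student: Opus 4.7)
The plan is to extend the doubling control from the range $[R_1,R_2]$ down to $(0,R_1)$ by inserting $R_1$ as an intermediate radius and using (L) to bridge the gap. The two elementary inputs are the monotonicity $m(B_o(R))\leq m(B_o(R_1))$ when $R\leq R_1$, and the trivial lower bound $m(B_o(r))\geq m(o)$ valid for every $r\geq 0$.

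I would split the range $0<r\leq R\leq R_2$ into three sub-ranges and treat them separately. When $R_1\leq r\leq R\leq R_2$ the conclusion is immediate from the hypothesis $V(R_1,R_2,C_1\Phi,n)$. When $0<r\leq R<R_1$, I would bound $m(B_o(R))/m(B_o(r))$ by $m(B_o(R_1))/m(o)$ and apply (L) at radius $R_1$; this produces a factor $\nu_{o,n(R_1)}R_1^{n(R_1)}$, which is exactly the extra factor (beyond $\nu_{o,n(R)}^{\theta(n(R),R_1)}\geq 1$) appearing in the small-$r$ branch of $\Phi_o^{n(R)}(r)$, so that together with $(R/r)^{n(R)}\geq 1$ this gives the desired inequality with constant $C_2(R_1)$. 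When $r<R_1\leq R\leq R_2$, I would telescope
\[
\frac{m(B_o(R))}{m(B_o(r))}=\frac{m(B_o(R))}{m(B_o(R_1))}\cdot\frac{m(B_o(R_1))}{m(B_o(r))},
\]
control the first factor via $V(R_1,R_2,C_1\Phi,n)$ applied at the radii $R_1$ and $R$ (which by the ``else'' branch of $\Phi$ gives $C_1\nu_{o,n(R)}^{\theta(n(R),R_1)}(R/R_1)^{n(R)}$), and control the second factor exactly as in the previous case. Multiplying and comparing with the target $\Phi_o^{n(R)}(r)(R/r)^{n(R)}$, the three factors $\nu_{o,n(R)}^{\theta(n(R),R_1)}$, $R_1^{n(R_1)}$, and $\nu_{o,n(R_1)}$ cancel against the small-$r$ definition of $\Phi$, leaving the ratio $(r/R_1)^{n(R)}\leq 1$.

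The net constant $C':=C_1C_2(R_1)$ (regarded as a function of the relevant radii) then works uniformly in all three sub-ranges. The main obstacle is purely bookkeeping: verifying that the small-$r$ branch of $\Phi_o^{n(R)}(r)$ has been designed to absorb precisely the factor $R_1^{n(R_1)}\nu_{o,n(R_1)}$ supplied by (L) at the transition radius $R_1$, and that the doubling step from $R_1$ to $R$ contributes the correct power of $\nu_o$ to match the leading factor $\nu_{o,n(R)}^{\theta(n(R),R_1)}$ in the small-$r$ branch of $\Phi$. Once these two compatibility checks are made, no further analytic input (in particular, no use of the path metric assumption or of the Faber-Krahn inequality) is required.
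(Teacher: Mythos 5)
Your proposal is correct and follows essentially the same route as the paper: split into the three ranges, use monotonicity together with (L) at the transition radius $R_1$ to handle radii below $R_1$, and telescope through $R_1$ (which is exactly what the paper's bound $m(B_o(r_2))/m(o)\leq [\mathrm{V\ at\ }(R_1,r_2)]\cdot m(B_o(R_1))/m(o)$ amounts to) for the sandwiching case. The bookkeeping against the small-$r$ branch of $\Phi$ works out as you describe, so no further comment is needed.
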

\begin{proof}We must show that volume doubling holds for radii smaller than $R_1$ and two radii sandwiching $R_1$.
We infer from $L(R_1,R_2,C_1\nu,n)$ that for all $r\in[R_1,R_2]$ we have
$
{m(B_o(r))}/{m(o)}\leq C_{1}(r)\nu_o(n(r))r^{n(r)}.
$
Therefore, for all $0< r_1\leq r_2\leq R_1$, we have
\[
\frac{m(B_o(r_2))}{m(B_o(r_1))}\leq \frac{m(B_o(R_1))}{m(o)}\leq C_{1}(R_1)\nu_o(n(R_1))R_1^{n(R_1)}.
\]
Further, if $0<r_1\leq R_1\leq r_2\leq R_2$, we use $V(R_1,R_2,C_2\Phi,n))$ in $o$ and the above estimate to obtain
\begin{multline*}
\frac{m(B_o(r_2))}{m(B_o(r_1))}
\leq \frac{m(B_o(r_2))}{m(o)}
\leq C_{2}(r_2)\Phi^{n(r_2)}(R_1)\left(\frac{r_2}{R_1}\right)^{n(r_2)}\frac{m(B_o(R_1))}{m(o)}
\\
\leq 
C_{2}(r_2)C_{1}(R_1)\nu_o(n(R_1))R_1^{n(R_1)}
\Phi^{n(r_2)}(R_1)\left(\frac{r_2}{r_1}\right)^{n(r_2)},
\end{multline*}
and this yields the claim.
\end{proof}
\begin{theorem}\label{thm:fulldoubling}
Let $o\in X$, $R_2\geq R_1\geq 2S$, $n\colon [R_1,R_2]\to (0,\infty)$, and $a\colon [R_1,R_2]\to (0,1]$ such that $FK(R_1,R_2,a,n)$ holds in $o$. Then there exists a function $C=C_{a,n,S,R_1}>0$ such that  $V(R_2,C\Phi,n) $ holds in $o$.
\end{theorem}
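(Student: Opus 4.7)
The plan is to assemble Theorem~\ref{thm:fulldoubling} directly from the three results of this section by composition, with no new analytic work. The statement extends $V(R_1,R_2,C\Phi,n)$ (doubling only on the window $[R_1,R_2]$) to $V(R_2,C\Phi,n)$ (doubling on all radii below $R_2$), and the mechanism is exactly what Lemma~\ref{lem:fulldoubling} packages: once one controls the absolute volume ratio $m(B_o(R_1))/m(o)$, comparisons between radii below $R_1$ become trivial and comparisons crossing $R_1$ can be split into a small-to-$R_1$ piece plus an $R_1$-to-large piece.

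First, I would apply Lemma~\ref{lem:localreg} to the hypothesis $FK(R_1,R_2,a,n)$ at $o$, noting that $R_1\geq 2S\geq S$, to get a function $C_2=C_{a,n}>0$ such that $L(R_1,R_2,C_2\nu,n)$ holds at $o$. This gives the absolute bound $m(B_o(r))/m(o)\leq C_2(r)\nu_o(n(r))r^{n(r)}$ for $r\in[R_1,R_2]$ needed to handle the small-radius regime. Then I would apply Theorem~\ref{thm:asympdoubling} (general form) to the same hypothesis, using $R_1\geq 2S$ and $a\leq 1$, to obtain a function $C_1=C_{a,n,S}>0$ such that $V(R_1,R_2,C_1\Phi,n)$ holds at $o$, which supplies doubling strictly inside the window $[R_1,R_2]$.

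Finally, I would feed both conclusions into Lemma~\ref{lem:fulldoubling} whose hypotheses are precisely $(L)$ and $(V)$ on $[R_1,R_2]$; its conclusion is the desired $V(R_2,C'\Phi,n)$ at $o$ for some $C'=C'_{C_1,C_2}$. Tracing the resulting constant $C_{a,n,S,R_1}$ through, one sees the $R_1$-dependence entering exactly through the factor $C_2(R_1)\nu_o(n(R_1))R_1^{n(R_1)}$ that arises in Lemma~\ref{lem:fulldoubling} when comparing small radii with radii above $R_1$. There is no genuine obstacle here: the theorem is a packaging statement, and the only subtlety worth flagging is that the $R_1$-dependence of $C$ is genuine because crossing the threshold $R_1$ forces one to convert an absolute volume estimate into a ratio, which is exactly why $(L)$ had to be invoked alongside $(V)$.
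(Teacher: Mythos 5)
Your proposal is correct and matches the paper's own proof exactly: it, too, derives $V(R_1,R_2,C_1\Phi,n)$ from Theorem~\ref{thm:asympdoubling}, $L(R_1,R_2,C_2\nu,n)$ from Lemma~\ref{lem:localreg}, and then concludes via Lemma~\ref{lem:fulldoubling}. Your additional remark about where the $R_1$-dependence of the constant enters is accurate and consistent with the constant produced in the paper's proof of Lemma~\ref{lem:fulldoubling}.
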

\begin{proof}By Theorem~\ref{thm:asympdoubling}, we infer the existence of a function $C_{a,n,S}>0$ such that  $V(R_1,R_2,C_{a,n,S}\Phi,n) $ holds in $o$. Hence, Lemma~\ref{lem:localreg} yields the existence of a function $C_{a,n}>0$ such that $L(R_1,R_2,C_{a,n}\nu,n)$ holds in $o$. Hence, the statement follows from Lemma~\ref{lem:fulldoubling}.
\end{proof}
In the special case of the normalizing measure, Theorem~\ref{thm:fulldoubling} reduces to the following.

\begin{corollary}[Volume doubling normalizing measure]\label{cor:asympdoublingnormalized}
Let $m=\deg$, and assume that there are $o\in X$ and constants $n>0$, $a\in(0,1]$, and $R_2\geq R_1\geq 2$ such that $FK(R_1,R_2,a,n)$ holds in $o$.
Then there is a constant $C=C_{a,n,R_1}>0$ such that  $V(R_2,C,n)$ holds in $o$.
\end{corollary}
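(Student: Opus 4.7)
The plan is to derive this corollary as the specialization of Theorem~\ref{thm:fulldoubling} to the normalizing measure $m=\deg$. The point is that in this case, the combinatorial distance $\rho_c$ is an intrinsic path metric with jump size $S=1$ (as noted after the definition of intrinsic metric), so the hypothesis $R_1\geq 2S$ of Theorem~\ref{thm:fulldoubling} becomes precisely $R_1\geq 2$, matching what is assumed. Moreover, with $m=\deg$ one has $\Deg_x = \deg_x/m(x) = 1$ for every $x\in X$.

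First I would apply Theorem~\ref{thm:fulldoubling} directly to produce a constant $C_0=C_{a,n,S,R_1}>0$ such that $V(R_2,C_0\Phi,n)$ holds in $o$, with $\Phi$ the correction factor defined before Theorem~\ref{thm:asympdoubling}. Next, I compute $\Phi$ explicitly in this setting: since $\Deg_o=1$, the local factor reduces to
\[
\nu_{o,n} = [1\vee \Deg_o]^{n/2} = 1,
\]
and therefore, reading off the definition, $\Phi_o^n(r)=\nu_{o,n}^{\theta(n,r)} = 1$ whenever $r\geq R_1$, while for $r<R_1$ one obtains $\Phi_o^n(r) = \nu_{o,n}^{\theta(n,R_1)}\, R_1^n\, \nu_{o,n} = R_1^n$. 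In either case $\Phi_o^n(r)\leq \max(1,R_1^n)$, a constant depending only on $R_1$ and $n$.

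Setting $C := C_0\cdot\max(1,R_1^n)$, the bound
\[
\frac{m(B_o(R))}{m(B_o(r))}\leq C_0\Phi_o^n(r)\Bigl(\frac{R}{r}\Bigr)^{n_o(R)} \leq C\Bigl(\frac{R}{r}\Bigr)^{n}
\]
holds for all $0<r\leq R\leq R_2$, which is exactly $V(R_2,C,n)$ in $o$. The dependence of $C$ is on $a$, $n$, and $R_1$ (the dependence on $S$ vanishes because $S=1$ is fixed by the choice $m=\deg$), matching the asserted $C=C_{a,n,R_1}$. No serious obstacle is expected: everything reduces to bookkeeping, the only mildly delicate point being to confirm that the $r<R_1$ branch of $\Phi$ indeed collapses to the constant $R_1^n$ so that it can be absorbed into $C$.
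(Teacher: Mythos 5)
Your proof is correct and follows essentially the same route as the paper: specialize the general doubling result to $m=\deg$, observe $\Deg\equiv 1$ so $\nu=1$ and $\Phi\leq 1\vee R_1^n$, and absorb this into the constant. If anything you are slightly more careful than the paper's own proof, which cites Theorem~\ref{thm:asympdoubling} even though covering radii $r<R_1$ (as required by $V(R_2,C,n)$) really needs Theorem~\ref{thm:fulldoubling}, exactly as you invoke it.
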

\begin{proof}Since $m=\deg$, we have $\nu=1$ and hence $\Phi\leq 1\vee R_1^n$. Since $\rho$ is the combinatorial distance with $S=1$, and $n$ is a constant, Theorem~\ref{thm:asympdoubling} yields the claim with $C_{a,n,R_1}=\left(\frac{2}{a}\right)^{n^2+7n+4}(1\vee R_1^n)$.
\end{proof}

\section{Mean value inequality}\label{section:tas}

This section is devoted to the derivation of the $\ell^2$-mean value inequality for solutions of the heat equation. After obtaining maximal inequalities for regularized versions of positive parts of subsolutions, we run the deGiorgi-Grigor'yan iteration. This iteration procedure naturally stops after an amount of steps given in terms of the radius of the ball under consideration. In order to obtain the mean value inequality, we incorporate a new discrete version of the iteration step which always holds on a graph with intrinsic metric.

Let $I\subset\RR$ and $B\subset X$. The function $u\colon I\times B\to\RR$ is called solution (resp.~subsolution) on $I\times B$ if  for any $x\in B$ the map $t\mapsto u_t(x):=u(t,x)$ is continuously differentiable in the interior of $I$ such that the differential has a continuous extension to the closure of $I$ 
and satisfies
\[
\left(\frac{d}{dt}+\Delta \right)u =0\quad\text{on}\ I\times B\quad \mbox{(resp.~$ \leq$)}.
\]

\subsection{Maximal inequalities for positive parts of  subsolutions}\label{subsection:maximalinequalities}

Here, we derive maximal inequalities for positive parts of subsolutions of the heat equation which are a necessary tool to carry out the de Giorgi iteration step. We will prove such inequalities for regularized subsolutions by transferring  the arguments in \cite{GrigoryanHH} to the graph setting and combining them with a maximal inequality for subsolutions obtained in \cite[Lemma~2.3]{KellerRose-22a}.
\begin{lemma}\label{lem:ffunctions}
Let $T_1\leq T_2$, $B\subset X$ finite, $u\geq 0$ be a non-negative subsolution on $[T_1,T_2]\times B$, and 
 $F\in\cC^1(\RR)$ with
 \[
 F\restriction_{(-\infty,0]}=0,\quad F'\geq 0, \quad\text{and}\quad
 \sup_\RR F'<\infty.
 \]
Then for any $\epsilon\geq 0$ the function
\[
v:=F\circ (u-\epsilon)_+
=F\circ(u-\epsilon)
\]
is a subsolution on $[T_1,T_2]\times B$.
\end{lemma}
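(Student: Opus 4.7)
The plan is to verify $(\partial_t + \Delta)v \leq 0$ on $[T_1,T_2]\times B$ directly, by combining the classical chain rule in time with a ``chain rule inequality'' for the graph Laplacian. First, I would observe that the identity $F\circ(u-\epsilon)_+ = F\circ(u-\epsilon)$ is immediate from $F|_{(-\infty,0]}=0$, since both sides vanish wherever $u(t,x)\leq\epsilon$. Differentiating in $t$ via the ordinary chain rule and using the subsolution property $\partial_t u\leq -\Delta u$ together with $F'\geq 0$ yields
\[
\partial_t v(t,x) \;=\; F'(u(t,x)-\epsilon)\,\partial_t u(t,x) \;\leq\; -F'(u(t,x)-\epsilon)\,\Delta u(t,x);
\]
the required continuity up to the boundary of $[T_1,T_2]$ follows from the regularity assumed of $u$ together with $\sup_\RR F'<\infty$.

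The heart of the argument is then the pointwise bound
\[
\Delta v(t,x) \;\leq\; F'(u(t,x)-\epsilon)\,\Delta u(t,x), \qquad x\in B,
\]
which, once established, adds to the previous display to give the subsolution property $(\partial_t+\Delta)v\leq 0$. Unfolding the definition of $\Delta$, and using the non-negativity of the weights $b(x,y)$, this bound reduces summand-by-summand (over neighbours $y$ of $x$) to the tangent-line inequality
\[
F(u(x)-\epsilon) - F(u(y)-\epsilon) \;\leq\; F'(u(x)-\epsilon)\,\bigl(u(x)-u(y)\bigr);
\]
once this is at hand, the proof of the lemma is complete.

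The main obstacle is thus to justify this pointwise tangent-line inequality. The most direct route is the mean value theorem: writing $F(u(x)-\epsilon) - F(u(y)-\epsilon) = F'(\xi)(u(x)-u(y))$ for some $\xi$ between $u(x)-\epsilon$ and $u(y)-\epsilon$, the desired inequality in either sign case of $u(x)-u(y)$ amounts to monotonicity of $F'$, i.e.\ convexity of $F$. This is the hidden structural condition one should extract from the class of cut-off functions in use (and in particular it holds for the de~Giorgi type choices $F(z)=(z)_+^p$ with $p\geq 1$ that will drive the iteration in the sequel); the finiteness of $B$ plays no role beyond ensuring that all sums and pointwise operations are unambiguous, and $\epsilon\geq 0$ enters only through a pointwise shift that is invisible to both time- and space-derivatives.
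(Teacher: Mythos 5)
Your proof is correct and follows essentially the same route as the paper: chain rule in time, the tangent-line (convexity) inequality to get $\Delta(F_\epsilon\circ u)\leq F_\epsilon'(u)\,\Delta u$, and the sign conditions $F'\geq 0$ plus the subsolution property of $u$ to conclude. You are also right that convexity of $F$ is the genuinely needed hypothesis and does not follow from the stated conditions $F|_{(-\infty,0]}=0$, $F'\geq 0$, $\sup_\RR F'<\infty$ alone; the paper's proof asserts ``$F_\epsilon'\geq 0$ such that $F_\epsilon$ is convex,'' which is a non sequitur, though harmless in context since the approximating functions $F_n$ actually used (obtained by integrating a non-negative hat function twice) are convex, and convexity is invoked again later in Lemma~\ref{lemma:maxandintgeneral}.
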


\begin{proof}
Define 
\[
F_\epsilon\colon \RR\longrightarrow\RR, \quad x\longmapsto F(x-\epsilon)
\]
and observe that $F_\epsilon$ is differentiable and vanishes at zero. Since we have $v=F_\epsilon\circ u$, the chain rule yields
\[
\frac{d}{dt}v(x)=F_\epsilon'(u(x))\cdot \frac{d}{dt}u_t(x).
\]
Since $F'\geq 0$ we have $F_\epsilon'\geq 0$ such that $F_\epsilon$ is convex. Hence, we have 
\begin{multline*}
\Delta v(x)
=\Delta (F_\epsilon\circ u)(x)
=
\frac{1}{m(x)}\sum_{y\in X
}b(x,y)(F_\epsilon(u(x))-F_\epsilon(u(y)))
\\
\leq 
\frac{1}{m(x)}\sum_{y\in X
}b(x,y)F_\epsilon'(u(x))(u(x)-u(y))
=F_\epsilon'(u(x))\Delta u(x).
\end{multline*}
Since $F_\epsilon'\geq 0$, the subsolution property of $u$ yields
\[
\frac{d}{dt}v(x) +\Delta v(x)
\leq F_\epsilon'(u(x))\left(\frac{d}{dt} u(x)+\Delta u(x)
\right)\leq 0,
\]
i.e., $v$ is a subsolution.
\end{proof}

The combinatorial interior of a set $ B\subset X $ is given by
\begin{align*}
	B^{\circ}=\{x\in B\colon b(x,y)=0 \mbox{ for all }y\in X\setminus B \}.
\end{align*}
We have the following maximal inequality for all subsolutions of the heat equation.
\begin{proposition}[{\cite[Lemma~2.3]{KellerRose-22a}}]\label{prop:KRa}
Let $T_1\leq T_2$, $B\subset X$ finite, functions $\phi\colon X\to[0,\infty)$, $ 0\leq \phi\leq \mathbf{1}_{B^{\circ}}  $,
 $\chi\colon [T_1,T_2]\to \RR$ piecewise differentiable, 
\[\eta\colon [T_1,T_2]\times B\to\RR,\qquad (t,x)\mapsto\eta_t(x):=\chi(t)\phi(x).
\] 
Then we have for all subsolutions $v$ and $t\in[T_1,T_2]$
\[
\frac{d}{dt}\|\eta_tv_t\|^{2}+\frac{1}{2}\left\|\eta_t\vert\nabla v_t\vert\right\|^2
\leq \Vert\Eins_Bv_t\Vert^2
\left(2\eta_t\frac{d}{d t}\eta_t+166\|\vert\nabla \eta_t\vert\|_{X}^{2}\right).
\]
\end{proposition}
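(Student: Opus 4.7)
The plan is to run a Caccioppoli-type energy estimate: differentiate $\|\eta_t v_t\|^2$ in time, apply the subsolution property to the $\partial_t v_t$ term, perform a discrete integration by parts against $\eta_t^2 v_t$, and then split the resulting integrand via Young's inequality so that the good gradient contribution $\tfrac12\|\eta_t\vert\nabla v_t\vert\|^2$ remains on the left while the slack is absorbed into the right-hand side. Since $\eta_t=\chi(t)\phi$ is separated in space and time and supported in $B^\circ$, the product rule yields
\[
\frac{d}{dt}\|\eta_t v_t\|^2 = 2\sum_B m\,\eta_t\,\partial_t\eta_t\, v_t^2 + 2\langle \eta_t^2 v_t,\partial_t v_t\rangle.
\]
The subsolution inequality $\partial_t v_t+\Delta v_t\leq 0$ tested against the non-negative function $\eta_t^2 v_t$ (applied if necessary to a non-negative regularization as provided by Lemma~\ref{lem:ffunctions}) gives $\langle\eta_t^2 v_t,\partial_t v_t\rangle\leq -\langle\eta_t^2 v_t,\Delta v_t\rangle$. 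Because $\phi\leq\Eins_{B^\circ}$, this pairing extends to a symmetric sum over all of $X$, producing
\[
-\langle \eta_t^2 v_t,\Delta v_t\rangle = -\tfrac{1}{2}\sum_{x,y\in X}b(x,y)\bigl(\eta_t^2(x)v_t(x)-\eta_t^2(y)v_t(y)\bigr)\bigl(v_t(x)-v_t(y)\bigr).
\]

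Next I would apply the symmetric Leibniz-type identity
\[
\eta^2(x)v(x)-\eta^2(y)v(y) = \tfrac{\eta^2(x)+\eta^2(y)}{2}\bigl(v(x)-v(y)\bigr) + \tfrac{(v(x)+v(y))(\eta(x)+\eta(y))}{2}\bigl(\eta(x)-\eta(y)\bigr),
\]
which splits the integrand into a diagonal piece whose symmetrized sum equals precisely $\|\eta_t\vert\nabla v_t\vert\|^2$, plus a genuinely mixed cross piece. On the cross piece I would use Young's inequality $2|ab|\leq\alpha a^2+\alpha^{-1}b^2$ with $a=(\eta(x)+\eta(y))(v(x)-v(y))$ and $b=(v(x)+v(y))(\eta(x)-\eta(y))$, together with $(\eta(x)+\eta(y))^2\leq 2(\eta^2(x)+\eta^2(y))$ and $(v(x)+v(y))^2\leq 2(v^2(x)+v^2(y))$, to decompose the cross piece cleanly into a multiple of $\|\eta_t\vert\nabla v_t\vert\|^2$ (to be absorbed on the left) and a multiple of the error term. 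The error term is rearranged via $\sum_y b(x,y)(\eta(x)-\eta(y))^2 = m(x)\vert\nabla\eta_t\vert^2(x)$ and bounded by $\|\vert\nabla\eta_t\vert\|_X^2\,\|\Eins_B v_t\|^2$ (using that the support of $\vert\nabla\eta_t\vert$ is contained in $B$ by the definition of $B^\circ$). Choosing $\alpha$ so that the total coefficient of $\|\eta_t\vert\nabla v_t\vert\|^2$ on the right drops strictly below $\tfrac12$ yields the asserted inequality, with the $2\eta_t\partial_t\eta_t\|\Eins_B v_t\|^2$ term coming directly from the untouched time-derivative piece after estimating $\eta_t\partial_t\eta_t$ by its supremum.

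The principal obstacle I anticipate is the book-keeping of constants. The discrete Leibniz identity produces a mixed fourth-order difference which, unlike in the smooth setting, is not automatically signed; absorbing its gradient-in-$v$ factor into the left-hand side forces a large compensating loss on the gradient-in-$\eta$ factor, and the combination of the two factors of $2$ from the inequalities $(\eta(x)+\eta(y))^2\leq 2(\eta^2(x)+\eta^2(y))$ and $(v(x)+v(y))^2\leq 2(v^2(x)+v^2(y))$ with the factor $1/\alpha$ from Young and the doubling involved in symmetrizing the $x\leftrightarrow y$ sums is precisely where the explicit constant $166$ originates. A secondary technicality is the sign issue in the subsolution step: to test meaningfully against $\eta_t^2 v_t$ one needs $v_t\geq 0$, which is why the proposition is used in the sequel on positive parts of subsolutions or on regularizations $F\circ u$ with $F$ as in Lemma~\ref{lem:ffunctions}.
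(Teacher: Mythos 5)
The paper does not prove this proposition at all: it is imported verbatim as \cite[Lemma~2.3]{KellerRose-22a}, so there is no in-paper argument to compare against. Your proposal is the standard Caccioppoli computation and it is essentially sound: the product rule, the testing of the subsolution inequality against $\eta_t^2v_t$, Green's formula, the symmetric Leibniz identity (which is algebraically correct), the Young/Cauchy--Schwarz absorption, and the observation that $\supp|\nabla\eta_t|\subset B$ because $\supp\phi\subset B^\circ$ are all the right steps, and with $\alpha=1/2$ they close the estimate. Two remarks. First, the non-negativity issue you flag in your last paragraph is not merely a technicality to be waved at: for a signed subsolution the statement is actually \emph{false} (take $v_t\equiv-\euler^{t}$ spatially constant and $\chi\equiv1$; then the left-hand side is $2\euler^{2t}\|\phi\|^2>0$ while the right-hand side vanishes), so the hypothesis $v\geq0$ on $\supp\eta$ is genuinely needed and your proof only establishes the proposition in that case. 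This is consistent with every use of the proposition in the paper (it is only ever applied to $v=F\circ(u-\epsilon)_+\geq0$ from Lemma~\ref{lem:ffunctions}), but you should state the restriction explicitly rather than parenthetically. Second, your bookkeeping speculation about the origin of $166$ is off: carried out carefully, your route yields the inequality with constant $2$ in place of $166$ (and the time-derivative term bounded by its supremum over $B$), which is strictly stronger than what is claimed; the large constant in the cited lemma comes from the heavier machinery of \cite{KellerRose-22a}, not from the factors of two in this argument. Since a smaller constant implies the stated bound, this does not affect the validity of your proof.
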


The combination of Lemma~\ref{lem:ffunctions} and Proposition~\ref{prop:KRa} leads to the following maximal inequality for regularized positive parts of subsolutions.
\begin{lemma}\label{lemma:maxandintgeneral}Let $T_1\leq T_2$, $B\subset X$ finite, $\phi\colon X\to[0,\infty)$, $ 0\leq \phi\leq \mathbf{1}_{B^{\circ}}  $,
 $\chi\colon [T_1,T_2]\to \RR$ piecewise differentiable, 
\[\eta\colon [T_1,T_2]\times B\to\RR,\qquad (t,x)\mapsto\eta_t(x):=\chi(t)\phi(x).
\] 
Then we have for $v$, $\epsilon$, and $F$ as in Lemma~\ref{lem:ffunctions} and $t\in[T_1,T_2]$
\[
\frac{d}{dt}\|\eta_tv_t\|^{2}+\frac{1}{4}\Vert\vert\nabla (\eta_t v_t)\vert\Vert^2
\leq \sup_{\RR}F'\cdot \Vert\Eins_B(u_t-\epsilon)_+\Vert^2
\left(2\eta_t\frac{d}{d t}\eta_t+167\|\vert\nabla \eta_t\vert\|_{X}^{2}\right).
\]
\end{lemma}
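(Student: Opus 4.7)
The plan is to combine Lemma~\ref{lem:ffunctions} with Proposition~\ref{prop:KRa} via a discrete product rule applied to $\eta v$. First I would invoke Lemma~\ref{lem:ffunctions} to conclude that $v := F \circ (u-\epsilon)_+ = F\circ(u-\epsilon)$ is itself a non-negative subsolution on $[T_1,T_2] \times B$, since $F$ meets all the hypotheses of that lemma. Applying Proposition~\ref{prop:KRa} directly to this $v$ with the given $\eta$ yields
\[
\frac{d}{dt}\|\eta_t v_t\|^2 + \frac{1}{2}\|\eta_t|\nabla v_t|\|^2 \leq \|\Eins_B v_t\|^2\left(2\eta_t \frac{d}{dt}\eta_t + 166\||\nabla \eta_t|\|_X^2\right).
\]

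Next I would pass from $\|\eta_t|\nabla v_t|\|^2$ on the left to $\||\nabla(\eta_t v_t)|\|^2$. The algebraic identity
\[
\eta(x)v(x) - \eta(y)v(y) = \tfrac{\eta(x)+\eta(y)}{2}(v(x)-v(y)) + \tfrac{v(x)+v(y)}{2}(\eta(x)-\eta(y)),
\]
combined with $(a+b)^2 \leq 2a^2 + 2b^2$ and $(\eta(x)+\eta(y))^2 \leq 2\eta(x)^2 + 2\eta(y)^2$, leads after summation against $b(x,y)$ (using $b(x,y) = b(y,x)$) to the discrete product estimate
\[
\||\nabla(\eta_t v_t)|\|^2 \leq 2\|\eta_t|\nabla v_t|\|^2 + 2\|v_t|\nabla\eta_t|\|^2.
\]
Rearranging gives $\tfrac12\|\eta_t|\nabla v_t|\|^2 \geq \tfrac14 \||\nabla(\eta_t v_t)|\|^2 - \tfrac12 \|v_t|\nabla\eta_t|\|^2$, and the extra error is harmless: since $\phi \leq \Eins_{B^\circ}$ forces $\supp|\nabla\eta_t| \subset B$, we have $\|v_t|\nabla\eta_t|\|^2 \leq \||\nabla\eta_t|\|_X^2 \|\Eins_B v_t\|^2$. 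Moving this contribution to the right-hand side merges it with the $166\||\nabla\eta_t|\|_X^2$ term and produces the $167$ in the claimed bound.

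Finally, the hypotheses $F(0)=0$ and $0 \leq F' \leq \sup_\RR F'$ yield the pointwise bound $v = F\circ(u-\epsilon)_+ \leq \sup_\RR F' \cdot (u-\epsilon)_+$, which translates $\|\Eins_B v_t\|^2$ into the asserted multiple of $\|\Eins_B (u_t-\epsilon)_+\|^2$. The only substantive point of the argument is the discrete product rule: unlike the continuous setting, where one readily bounds $|\nabla(\eta v)|^2$ by $(1+\delta)\eta^2|\nabla v|^2 + C_\delta v^2|\nabla \eta|^2$, the graph version must symmetrize carefully so that the two resulting $\ell^2$-norms are precisely those entering Proposition~\ref{prop:KRa}; every other step is a direct consequence of the two cited results.
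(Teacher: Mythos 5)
Your proposal is correct and follows essentially the same route as the paper: apply Lemma~\ref{lem:ffunctions} to get that $v$ is a subsolution, invoke Proposition~\ref{prop:KRa}, use the averaged discrete product rule $\nabla_{xy}(\eta v)=\av_{xy}(\eta)\nabla_{xy}v+\av_{xy}(v)\nabla_{xy}\eta$ together with $(a+b)^2\le 2a^2+2b^2$ to get $\||\nabla(\eta v)|\|^2\le 2\|\eta|\nabla v|\|^2+2\||\nabla\eta|\|_X^2\|\Eins_B v\|^2$, absorb the extra $\tfrac12\||\nabla\eta|\|_X^2\|\Eins_B v\|^2$ into the constant ($166+\tfrac12\le 167$), and finish with $F(x)\le(\sup F')x$ from $F(0)=0$. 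This matches the paper's argument step for step (including the same harmless bookkeeping of the $\sup F'$ factor in the final substitution).
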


\begin{proof}
We bound the left-hand side in the statement in terms of the left-hand side of 
Proposition~\ref{prop:KRa}.
To ease notation, we let $\av_{xy}(\phi):=\tfrac12(\phi(x)+\phi(y))$ and recall $\nabla_{xy}\phi=\phi(x)-\phi(y)$ for $x,y\in X$ and $\phi\in\cC(X)$. Then we have  $\nabla_{xy}(\phi v)=\av_{xy}(\phi)\nabla_{xy}v+\av_{xy}(v)\nabla_{xy}\phi$ and hence, using $2ab\leq a^2+b^2$,
\[
(\nabla_{xy}(\phi v))^2=(\av_{xy}(\phi)\nabla_{xy}v+\av_{xy}(v)\nabla_{xy}\phi)^2
\leq 2\av_{xy}(\phi)^2(\nabla_{xy}v)^2 +2 \av_{xy}(v)^2(\nabla_{xy}\phi)^2.
\]
Therefore,
\[
\Vert \vert \nabla (\phi v)\vert\Vert^2
=\sum_{x,y\in X}b(x,y)(\nabla_{xy}(\phi v))^2
\\\leq 2\sum_{x,y\in X}b(x,y)\av_{xy}(\phi)^2(\nabla_{xy}v)^2 +2 \sum_{x,y\in X}b(x,y)\av_{xy}(v)^2(\nabla_{xy}\phi)^2.
\]
To bound the first summand, recall  $\av_{xy}(\phi)=\frac{1}{2}(\phi(x)+\phi(y))$ and use $2ab\leq a^2+b^2$ to obtain
\[
2\sum_{x,y\in X}b(x,y)\av_{xy}(\phi)^2(\nabla_{xy}v)^2
\leq 
2 \Vert \phi\vert \nabla v\vert \Vert_2^2.
\]
Now we estimate the second summand. The definition of $\av_{xy}$ and $\supp \nabla \phi\subset B\times B$ imply
\[
2 \sum_{x,y\in X}b(x,y)\av_{xy}(v)^2(\nabla_{xy}\phi)^2
\leq 2\Vert \vert \nabla \phi\vert\Vert_\infty^2 \sum_Bm\ v^2=2\Vert \vert \nabla \phi\vert\Vert_\infty^2 \Vert\Eins_Bv\Vert^2.
\]
Hence, recalling $\eta_t=\chi(t)\phi$, 
\[
\Vert \vert \nabla (\eta_t v)\vert\Vert^2
=\chi(t)^2\Vert \vert \nabla (\phi v)\vert\Vert^2
\leq 2 \chi(t)^2\Vert \phi\vert \nabla v\vert \Vert^2+2\chi(t)^2\Vert \vert \nabla \phi\vert\Vert_X^2 \Vert\Eins_Bv\Vert^2
=2 \Vert \eta_t\vert \nabla v\vert \Vert^2+2\Vert \vert \nabla \eta_t\vert\Vert_X^2 \Vert\Eins_Bv\Vert^2.
\]
Using Proposition~\ref{prop:KRa} yields 
for all $t\in[T_1,T_2]$
\begin{multline*}
\frac{d}{dt}\|\eta_tv_t\|^{2}+\frac{1}{4}\Vert\vert\nabla (\eta_t v_t)\vert\Vert^2
\leq 
\frac{d}{dt}\|\eta_tv_t\|^{2}+\frac12 \Vert \eta_t\vert \nabla v\vert \Vert^2+\frac12\Vert \vert \nabla \eta_t\vert\Vert_X^2 \Vert\Eins_Bv\Vert^2
\\
\leq 
\Vert\Eins_Bv_t\Vert^2
\left(2\eta_t\frac{d}{d t}\eta_t+166\|\vert\nabla \eta_t\vert\|_{X}^{2}\right)+\frac12\Vert \vert \nabla \eta_t\vert\Vert_X^2 \Vert\Eins_Bv\Vert^2
\leq 
\Vert\Eins_Bv_t\Vert^2
\left(2\eta_t\frac{d}{d t}\eta_t+167\|\vert\nabla \eta_t\vert\|_{X}^{2}\right).
\end{multline*}
Since $F(0)=0$ and $F$ is convex, we get \[F(x)=\frac{F(x)-0}{x-0}\cdot  x\leq (\sup_{\RR} F' )\cdot x\] for all $x\geq 0$. Hence, we have \[v=F((u-\epsilon)_+)\leq (\sup_{\RR} F') \cdot  (u-\epsilon)_+\]
and thus the claim follows.
\end{proof}

In the following, for $0<t_1<t_2\leq T$, we will frequently use the distinguished time-cut-off function $\chi\colon\RR\to\RR$ given by 
\[
\chi(t)=
\begin{cases}
0 &\colon 0\leq t\leq t_1,\\
\frac{t-t_1}{t_2-t_1} &\colon t_1\leq t\leq t_2,\\
1 &\colon t_2\leq t\leq T.
\end{cases}
\]
Note that we have $\chi\leq 1$ and $
\tfrac{d}{dt}\chi
= (t_2-t_1)^{-1} \Eins_{ (t_1,t_2)}$ on $(0,T)\setminus\{t_1,t_2\}$. 
\begin{lemma}\label{lem:energyestimate}Let $B\subset X$ finite, $0<t_1<t_2\leq T$, $\chi$ as above,
$0\leq \phi\leq \Eins_{B^\circ}$, and $v$, $F$, $\epsilon$ as in Lemma~\ref{lem:ffunctions}. Then we have for any $t\in [t_2, T]$ 
\begin{multline*}
4
\Vert\eta_tF(u_t-\epsilon)\Vert^{2} +
\int_{t_1}^t
 \Vert \nabla(\eta_s F(u_s-\epsilon))\Vert^2\drm s
\leq 668(\sup F')\left(\frac{1}{t_2-t_1}+\Vert|\nabla\phi|\Vert_X^2\right) 
\int_{t_1}^t \|\Eins_{ B}(u_s-\epsilon)_+\|^{2}
\drm s.
\end{multline*}
\end{lemma}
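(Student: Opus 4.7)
The plan is to derive the energy inequality by integrating the differential maximal inequality of Lemma~\ref{lemma:maxandintgeneral} in time. Setting $v_s = F(u_s-\epsilon)$, that lemma applied with endpoints $t_1$ and $T$ gives, for every $s\in[t_1,T]$,
\[
\frac{d}{ds}\|\eta_sv_s\|^{2}+\frac{1}{4}\Vert\vert\nabla (\eta_s v_s)\vert\Vert^2 \leq (\sup F')\,\Vert\Eins_B(u_s-\epsilon)_+\Vert^2\left(2\eta_s\frac{d}{ds}\eta_s+167\|\vert\nabla \eta_s\vert\|_{X}^{2}\right).
\]
First I would integrate this from $s=t_1$ to $s=t$ for an arbitrary $t\in[t_2,T]$. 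Since $\chi(t_1)=0$, one has $\eta_{t_1}\equiv 0$, so the boundary contribution $\|\eta_{t_1}v_{t_1}\|^2$ at $s=t_1$ vanishes.

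Next, I would control the two error terms on the right using the product structure $\eta_s = \chi(s)\phi$. Because $0\leq \phi\leq 1$, one has the pointwise bounds $2\eta_s(x)\tfrac{d}{ds}\eta_s(x)=2\chi(s)\chi'(s)\phi(x)^2\leq 2\chi(s)\chi'(s)$ and $\|\vert\nabla\eta_s\vert\|_X^2=\chi(s)^2\|\vert\nabla\phi\vert\|_X^2\leq \|\vert\nabla\phi\vert\|_X^2$. Using $\chi'=(t_2-t_1)^{-1}\Eins_{(t_1,t_2)}$ together with $\chi\leq 1$, the first factor is dominated by $2(t_2-t_1)^{-1}\Eins_{(t_1,t_2)}(s)$. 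Substituting these $x$-uniform bounds into the integrated inequality, and enlarging the integration range on the right from $(t_1,t_2)$ to $(t_1,t)$ (which only enlarges a nonnegative integrand), yields
\[
\|\eta_tv_t\|^2 + \frac{1}{4}\int_{t_1}^t \|\vert\nabla(\eta_sv_s)\vert\|^2\drm s \leq (\sup F')\left(\frac{2}{t_2-t_1}+167\|\vert\nabla\phi\vert\|_X^2\right)\int_{t_1}^t\|\Eins_B(u_s-\epsilon)_+\|^2\drm s.
\]
Multiplying through by $4$ and using $4\cdot 2=8\leq 668$ and $4\cdot 167=668$ to unify both prefactors under the single constant $668$ then gives the stated estimate.

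The computation is essentially routine; the only subtle point is that although $\|\Eins_B(u_s-\epsilon)_+\|^2$ depends on $s$ and cannot be factored out, the remaining prefactor $2\chi(s)\chi'(s) + 167\|\vert\nabla\eta_s\vert\|_X^2$ admits a time-uniform upper bound $2(t_2-t_1)^{-1}+167\|\vert\nabla\phi\vert\|_X^2$ independent of $s$, which can be pulled out of the time integral without loss. No further estimate on $\int_{t_1}^t\|\Eins_B(u_s-\epsilon)_+\|^2\drm s$ is required; it is simply kept intact on the right-hand side.
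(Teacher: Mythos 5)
Your proposal is correct and follows essentially the same route as the paper: apply the differential maximal inequality of Lemma~\ref{lemma:maxandintgeneral} to $v=F(u-\epsilon)$, bound the prefactor $2\eta_s\tfrac{d}{ds}\eta_s+167\|\vert\nabla\eta_s\vert\|_X^2$ uniformly in $s$ using $\phi\leq 1$, $\chi\leq 1$, and $\chi'=(t_2-t_1)^{-1}\Eins_{(t_1,t_2)}$, then integrate over $[t_1,t]$ and use $\eta_{t_1}=0$ to kill the boundary term. The constant bookkeeping ($4\cdot 167=668$ and $4\cdot 2=8\leq 668$) is also handled correctly.
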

\begin{proof}Set $\eta_t=\chi(t)\phi$.
Since $\phi\leq 1$ we obtain
\[
\frac{d}{dt}\eta_t=\frac1{t_2-t_1} \Eins_{ (t_1,t_2)}\phi\leq \frac1{t_2-t_1} \Eins_{ (t_1,t_2)}.
\]
From $\chi\leq 1$ we infer
\[
\||\nabla\eta_t|\|_X=\chi(t)\|\nabla\phi\|_X\leq \||\nabla\phi|\|_X,
\]
such that, using $\eta\leq 1$ ,
\[
2\eta \frac{d}{dt}\eta_t+167\Vert\vert\nabla \eta_t\vert\Vert_{X}^{2}
\leq 
167\left(\frac1{t_2-t_1} \Eins_{ (t_1,t_2)}+\||\nabla\phi|\|_X^{2}\right).
\]
Moreover, $\supp \eta_t=\supp\phi\subset B$.
Apply Lemma~\ref{lemma:maxandintgeneral}
to $v=F(u-\epsilon)$ and the above choice for $\eta$ to obtain for all 
$t\in(0,T]\setminus\{t_1,t_2\}$
\begin{multline*}
4\frac{d}{dt}\Vert\eta_tF(u_t-\epsilon)\Vert^{2}+
\Vert\vert\nabla (\eta_t F(u_t-\epsilon))\vert\Vert^2
\leq 
 4(\sup F') \|\Eins_{B}(u_t-\epsilon)_+\|^{2}
\left(2\eta_t\frac{d}{d t}\eta_t+167\Vert\vert\nabla \eta_t\vert\Vert_{X}^{2}\right)
\\
\leq 688(\sup F')\|\Eins_{B}(u_t-\epsilon)_+\|^{2}
\left(\frac{1}{t_2-t_1}\Eins_{ (t_1,t_2)}+\Vert|\nabla\phi|\Vert_X^2
\right).
\end{multline*}
If we let $t\in[t_2,T]$ and integrate the inequality above over $[t_1,t]$, we obtain the claim by using $\eta_{t_1}=0$.
\end{proof}
\subsection{De Giorgi iteration steps}\label{subsection:deGiorgielementary}

In this section we provide the elementary de Giorgi iteration steps, i.e., based on the maximal inequality for positive parts of subsolutions, we bound the norm on space-time cylinders of a subsolution in terms of the norm an a larger cylinder. 
To fix notation, 
let $o\in X$, $0<b_1<b_2<\infty$, $0< r_2<r_1\leq R$, $0<t_1<t_2\leq T$, $u$ a subsolution in $X\times [t_1,t_2]$, and 
\[ 
a_i=\int_{t_i}^{T}\|\Eins_{B_o(r_i)}(u_s-b_i)_+\|^2\ \drm t=\int_{t_i}^{T}\sum_{B_o(r_i)}m(u_s-b_i)_+^2\ \drm t, \quad i\in\{1,2\}.
\]

First, we show that on any graph with intrinsic metric, the elementary step always holds if the smaller ball is just one vertex.
\begin{lemma}[Elementary estimate]\label{lem:elementarypoint} If $r_1-r_2>S$, then we have
\[
m(o)(u_T(o)-b_2)_+^2
\leq 
2^8
\left(
\frac{1}{T-t_1}+\frac{1}{(r_1-r_2-S)^2}\right)a_{1}.
\]
\end{lemma}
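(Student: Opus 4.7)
My plan is to deduce the pointwise bound at $o$ from the energy inequality of Lemma~\ref{lem:energyestimate} by choosing the time and space cutoffs, the level, and the function $F$ so that the left-hand side of that inequality already controls the quantity $m(o)(u_T(o)-b_2)_+^2$ while the right-hand side reduces to exactly $a_1$ times the two desired reciprocals.

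Concretely, I would apply Lemma~\ref{lem:energyestimate} with $B=B_o(r_1)$, with $t_2$ chosen equal to $T$ so that the piecewise-linear time-cutoff $\chi$ satisfies $\chi(T)=1$ (this replaces the factor $1/(t_2-t_1)$ by the desired $1/(T-t_1)$), with $\epsilon=b_1$, and with $F$ a $C^1$ approximation of $x\mapsto x_+$ such that $\sup F'\le1$; via monotone convergence this will effectively give $v = (u-b_1)_+$. For the spatial cutoff I would take $\phi=\phi_{\{o\},r_1-r_2-S}$ furnished by Proposition~\ref{prop:cutoff}, so $\phi(o)=1$ and $\||\nabla\phi|\|_X^2\le(r_1-r_2-S)^{-2}$; the assumption $r_1-r_2>S$ guarantees that the denominator is positive.

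Before invoking the energy inequality I need to verify the admissibility condition $\supp\phi\subset B_o(r_1)^\circ$. This is the only place the jump size enters: if $x\in\supp\phi\subset B_o(r_1-r_2-S)$ and $y\sim x$, then $\rho(o,y)\le\rho(o,x)+\rho(x,y)\le(r_1-r_2-S)+S\le r_1$, so every neighbour of $x$ lies in $B_o(r_1)$, i.e.\ $x\in B_o(r_1)^\circ$. With this in hand, Lemma~\ref{lem:energyestimate} at $t=T$ yields
\[
4\|\eta_T\,F(u_T-b_1)\|^2 \le 668\Bigl(\tfrac{1}{T-t_1}+\tfrac{1}{(r_1-r_2-S)^2}\Bigr)\int_{t_1}^{T}\|\mathbf{1}_{B_o(r_1)}(u_s-b_1)_+\|^2\,\mathrm{d}s.
\]
Since $\chi(T)\phi(o)=1$, the left side dominates $4m(o)F(u_T(o)-b_1)^2$; passing to the limit $F\nearrow(\cdot)_+$ and dividing by $4$ gives
\[
m(o)(u_T(o)-b_1)_+^2 \le 167\Bigl(\tfrac{1}{T-t_1}+\tfrac{1}{(r_1-r_2-S)^2}\Bigr)a_1.
\]
The final step is the trivial monotonicity $(u_T(o)-b_2)_+\le(u_T(o)-b_1)_+$ (using $b_2>b_1$) together with $167\le 2^8$, which closes the proof.

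I do not anticipate a real obstacle: the machinery of Lemma~\ref{lem:energyestimate} is already tailored to this application, and the only delicate point is justifying the limit $F\to(\cdot)_+$, which is standard via a smooth monotone approximation ($F_k$ with $F_k(x)=x$ for $x\ge1/k$, $F_k\equiv0$ on $(-\infty,0]$, interpolated with $\sup F_k'\le 1$) and dominated/monotone convergence in the time integral. The choice $t_2=T$ is what buys the clean constant $2^8$; any strictly interior $t_2$ would introduce a comparable but slightly weaker factor that is still absorbed into $2^8$.
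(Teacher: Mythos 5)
Your proposal is correct and follows essentially the same route as the paper: apply Lemma~\ref{lem:energyestimate} with $t_2=T$, $\epsilon=b_1$, a spatial cutoff with gradient bounded by $(r_1-r_2-S)^{-1}$, drop the energy term, let $F_n\to(\cdot)_+$, and use $b_1<b_2$ together with $668/4=167\le 2^8$. The only (immaterial) difference is that you center the cutoff at $\{o\}$ and read off the single term at $o$, whereas the paper takes $\phi$ equal to $1$ on all of $B_o(r_2)$ and first bounds $m(o)(u_T(o)-b_2)_+^2$ by the sum over that ball; your admissibility check $\supp\phi\subset B_o(r_1)^\circ$ via the jump size is exactly the point that needs verifying and is done correctly.
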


\begin{proof}
Let $\{F_n\}_{n\in\NN}$ be a sequence of functions such that each $F_n$ satisfies the conditions of Lemma~\ref{lem:ffunctions}, 
$F_n\to (\cdot)_+$ uniformly, and $\sup_{n\in\NN,\RR}F_n'\leq 1$. Such $F_n$ exists by integrating twice a hat function $\phi\in\cC_c([0,1])$ with $\int_0^1\phi(t)\drm t=1$ and setting $F_n(t)=n\phi(t/n)$, $n\in\NN$, cf.~\cite{GrigoryanHH}.
\\
Since $b_1<b_2$ and hence $(u_t-b_2)_+\leq (u_t-b_1)_+$, we clearly have
\begin{align*}
m(o)(u_T(o)-b_2)_+^2
\leq 
\sum_{B(r_2)}m (u_T-b_1)_+^2
=\lim_{n\to\infty} \|\Eins_{B(r_2)} F_n(u_T-b_1)\|^2.
\end{align*}
Set $\phi=\phi_{B(r_2),r_1-S}$ and $\eta=\chi\phi$, where $\chi$ is given before Lemma~\ref{lem:energyestimate}. Since $\chi_T=1$, Proposition~\ref{prop:cutoff} yields $\||\nabla\eta_T|\|_X=\chi_T\||\nabla\phi|\|_X\leq (r_1-r_2-S)^{-1} $. Dropping the second summand on the left-hand side in Lemma~\ref{lem:energyestimate} we obtain for $\epsilon=b_1$, since $r_2<r_1$, $t_1< t=t_2=T$, $\chi_{t_1}=0$, $\chi_T=1$,
\[
\|\Eins_{B(r_2)} F_n(u_T-b_1)\|^2
\leq 
\|\eta_T F_n(u_T-b_1)\|^2
\leq 
2^8(\sup F_n')
\left(
\frac{1}{T-t_1}+\frac{1}{(r_1-r_2-S)^2}\right)
\int_{t_1}^T
\|\Eins_{B(r_1)}(u_t-b_1)_+\|^2\ \drm t.
\]
Note $\sup_{n\in\NN,\RR} F_n'\leq 1$ to obtain the claim.
\end{proof}

The following lemma is an elaboration of the results in \cite{GrigoryanHH}, the difference being the purely discrete setting with an intrinsic metric and the restriction to centered balls with large enough difference on the radii.

\begin{lemma}\label{lem:elementarystep}
Assume $r_1-r_2>4S$ and that there are constants $a,n>0$ such that for all $U\subset B_o(r_1) $, we have
\[
\lambda(U)\geq a\ m(U)^{-{\frac{2}{n}}}.
\]
Then we have 
\begin{align*}
a_2
\leq 
\frac{2^{{\frac{28}{n}}}}{a(b_2-b_1)^{{\frac{4}{n}}}}
 \left(\frac{1}{t_2-t_1}+\frac{1}{(r_1-r_2-4S)^2}\right)^{1+{\frac{2}{n}}} 
a_1^{1+{\frac{2}{n}}}.
\end{align*} 
\end{lemma}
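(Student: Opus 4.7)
I would adapt the de Giorgi iteration step from \cite{GrigoryanHH} to the graph setting, using the energy estimate in Lemma~\ref{lem:energyestimate} as the engine and the Faber--Krahn hypothesis to convert the gradient integral back to an $\ell^{2}$-mass. The main subtlety is that the triangular cutoffs of Proposition~\ref{prop:cutoff} are strictly below $1$ off their base set, so a single spatial cutoff does not let Chebyshev relate the $\ell^{2}$-mass of $(u_t-b_1)_+$ on the support of the cutoff to the quantity bounded by the energy estimate; this forces a two-scale construction, which together with the intrinsic jump size $S>0$ yields the $4S$-loss in the denominator.

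Set $\delta:=(r_1-r_2-2S)/2$ and introduce the nested cutoffs $\phi_1:=\phi_{B_o(r_2),\delta}$ and $\phi_2:=\phi_{B_o(r_2+\delta),\delta}$ from Proposition~\ref{prop:cutoff}. Both are supported in $B_o(r_1-2S)\subseteq B_o(r_1)^\circ$ (the inclusion using that edges only connect vertices within distance $S$), $\phi_2\equiv 1$ on $B_o(r_2+\delta)\supseteq \supp\phi_1$, and $\||\nabla\phi_i|\|_\infty\leq 1/\delta$ for $i=1,2$. With $\chi$ the time cutoff defined just before Lemma~\ref{lem:energyestimate}, set $\eta_i=\chi\phi_i$. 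Two applications of Lemma~\ref{lem:energyestimate} with $B=B_o(r_1)$ and a sequence $F_n\to(\cdot)_+$ yield, in the limit $n\to\infty$, on the one hand (taking $\epsilon=b_1$, cutoff $\eta_2$, and the supremum over $t\in[t_2,T]$ of the first left-hand side term)
\[
\sup_{t\in[t_2,T]}\|\eta_2(u_t-b_1)_+\|^{2}\leq 167\left(\frac{1}{t_2-t_1}+\frac{1}{\delta^{2}}\right)a_{1},
\]
and on the other (taking $\epsilon=b_2$, cutoff $\eta_1$, dropping the first left-hand side term, and using $(u_s-b_2)_+\leq (u_s-b_1)_+$ on the right)
\[
\int_{t_2}^{T}\||\nabla(\eta_1(u_s-b_2)_+)|\|^{2}\,\drm s\leq 668\left(\frac{1}{t_2-t_1}+\frac{1}{\delta^{2}}\right)a_{1}.
\]

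For each $t\in[t_2,T]$ the function $f_t:=\eta_1(u_t-b_2)_+$ is supported in $U_t\subseteq\supp\phi_1\cap\{u_t>b_2\}\subset B_o(r_1)$, so the Faber--Krahn hypothesis yields $\|f_t\|^{2}\leq \frac{m(U_t)^{2/n}}{2a}\||\nabla f_t|\|^{2}$. Chebyshev, combined with $\phi_2\equiv 1$ on $\supp\phi_1$ and $\chi(t)=1$, gives
\[
m(U_t)\leq\frac{1}{(b_2-b_1)^{2}}\sum_{\supp\phi_1}m(u_t-b_1)_+^{2}\leq\frac{\|\eta_2(u_t-b_1)_+\|^{2}}{(b_2-b_1)^{2}}.
\]
Since $\mathbf{1}_{B_o(r_2)}\leq\eta_1$ on $[t_2,T]$, $a_2\leq\int_{t_2}^{T}\|f_t\|^{2}\,\drm t$; substituting the Faber--Krahn estimate, pulling $\sup_t m(U_t)^{2/n}$ outside the time integral, and inserting the two energy estimates gives the claim after using $1/\delta^{2}\leq 4/(r_1-r_2-4S)^{2}$ and collecting all numerical constants into $2^{28/n}$.

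The hard part is engineering the two-scale construction: a single triangular cutoff cannot simultaneously have a controlled gradient in the energy estimate and majorize $\mathbf{1}$ on its own support (which is what Chebyshev needs). Accounting for the two $S$-buffers required to keep $\supp\phi_2\subseteq B_o(r_1)^\circ$ and $\phi_2\equiv 1$ on $\supp\phi_1$ is precisely what forces the width $r_1-r_2-4S$ in the final estimate.
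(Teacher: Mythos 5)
Your proposal is correct and follows essentially the same route as the paper's proof: two applications of the energy estimate (Lemma~\ref{lem:energyestimate}) with nested intrinsic cutoffs, a Chebyshev bound on the measure of the superlevel set $\{u_t\ge b_2\}$ via the time-supremum of the cut-off $\ell^2$-mass, and the Faber--Krahn hypothesis to convert the gradient integral back into $a_2$; your only deviations are that you run Chebyshev at the level $b_1$ directly where the paper inserts the midpoint $\xi=(b_1+b_2)/2$, and you place the $S$-buffers slightly differently. The one caveat is that the numerical constant you actually collect (about $1336\cdot 668^{2/n}$) exceeds the stated $2^{28/n}$ once $n$ is not small, but the paper's own computation has the same defect (it yields $668\cdot 2^{28/n}$), and the extra absolute constant is harmless in the downstream iteration.
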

\begin{proof}
Let $B_i:=B_o(r_i)$,
\[
r:=r_1-r_2, \quad \xi=b_1+\frac{1}{2}(b_2-b_1),
\]
and
\[ U:=B\left(r_2+\frac{r}{2}\right),
\quad
\tilde B:=B\left(r_2+\frac{3}{4}r\right).
\]
Then $b_1<\xi<b_2$ and $B_2\subset U\subset \tilde B\subset B_1$.
Let $\{F_n\}_{n\in\NN}$ be a sequence of functions such that each $F_n$ satisfies the conditions of Lemma~\ref{lem:ffunctions}, 
$F_n\to (\cdot)_+$ uniformly, and $\sup_{n\in\NN,\RR}F_n'\leq 1$. Such $F_n$ exists by integrating twice a non-negative hat function $\phi\in\cC_c([0,1])$ with integral one on the interval $[0,1]$ and setting $F_n(t)=n\phi(t/n)$, $n\in\NN$, cf.~\cite{GrigoryanHH}.
\\

Set $\phi=\phi_{B_2,r/2-S}$ and $\eta=\chi\phi$, where $\chi$ is given before Lemma~\ref{lem:energyestimate}. Since $r_1-r_2> 4S>2S$, Proposition~\ref{prop:cutoff} yields $\Vert|\nabla\phi|\Vert_X\leq  2(r_1-r_2-2S)^{-1}<\infty$. Lemma~\ref{lem:energyestimate} applied to $v=F_n(u-b_2)$, $n\geq 1$, $\epsilon=b_2$, $B=B_1$, $t=T$, and $\phi$ yields for all $n\geq 1$ by dropping the non-negative first summand on the left-hand side of the resulting inequality
\begin{multline*}
\int_{t_1}^T 
 \Vert \nabla(\eta_s F_n(u_s-b_2))\Vert^2\
\drm s
\leq 
 668\left(\frac{1}{t_2-t_1}+\frac{1}{(r_1-r_2-2S)^2}\right) 
\int_{t_1}^T \|\Eins_{ B_1}(u_s-b_2)_+\|^{2}
\ \drm s
\\
\leq 
 668\left(\frac{1}{t_2-t_1}+\frac{1}{(r_1-r_2-2S)^2}\right) 
\int_{t_1}^T \|\Eins_{ B_1}(u_s-b_1)_+\|^{2}
\ \drm s =a_1,
\end{multline*}
where we used that $b_1<b_2$ implies $(u-b_2)_+\leq (u-b_1)_+$.
\\

Now, for fixed $s\in (0,T]$, consider
\[
E_s:=U\cap \{u_s\geq b_2\}\subset U.
\]
By $\supp \phi\subset U$ and $F_n(t)=0$ if $t\leq 0$,  we get $\supp\phi F_n(u_s-b_2)\subset E_s$. Since $\phi\restriction_{B_2}=1$ and $\chi(s)=1$ for $s\in[t_2,T]$,  we have $\eta_s(x)=\chi(s)\phi(x)=1$ for all $x\in B_2$ and $s\in [t_2,T]$. Therefore, the definition of the smallest Dirichlet eigenvalue $\lambda(E_s)$ of $E_s$  implies for all $s\in [t_2,T]$
\[
\| \Eins_{B_2}F_n(u_s-b_2)\|^2
\leq \|\Eins_{B_1}\chi(s)\phi F_n(u_s-b_2)\|^2
\\\leq \|\Eins_{E_s}(\eta_s F_n(u_s-b_2))\|^2
\leq \frac{1}{\lambda(E_s)} \Vert \nabla(\eta_s F_n(u_s-b_2))\Vert^2.
\]
Integrating the resulting inequality yields
\[
\int_{t_2}^T \| \Eins_{B_2}F_n(u_s-b_2)\|^2 \ \drm s
\leq 
\int_{t_2}^T 
\frac{1}{\lambda(E_s)} \Vert \nabla(\eta_s F_n(u_s-b_2))\Vert^2
\ \drm s
\\
\leq 
\sup_{s\in[t_2,T]}\frac{1}{\lambda(E_s)} 
\int_{t_2}^T 
 \Vert \nabla(\eta_s F_n(u_s-b_2))\Vert^2
\ \drm s.
\]
Since $t_1<t_2$, we have
\[
\int_{t_2}^T 
 \Vert \nabla(\eta_s F_n(u_s-b_2))\Vert^2
\ \drm s\leq 
\int_{t_1}^T 
 \Vert \nabla(\eta_s F_n(u_s-b_2))\Vert^2
\ \drm s
\\
\leq 
668
 \left(\frac{1}{t_2-t_1}+\frac{1}{(r_1-r_2-2S)^2}\right) 
a_1.
\]
Hence, the uniform convergence of $(F_n)$ implies
\[
a_2
=\lim_{n\to\infty}
\int_{t_2}^T \sum_{B_2} m F_n(u_s-b_2)^2 \drm s
\leq 
668\sup_{s\in[t_2,T]}\frac{1}{\lambda(E_s)} 
 \left(\frac{1}{t_2-t_1}+\frac{1}{(r_1-r_2-2S)^2}\right) 
a_1.
\]

Next, we estimate the supremum involving the Dirichlet eigenvalues.
For any $s\in(0,T]$ we have $E_s\subset U\subset B(r_1)$.
Since $B_1$ satisfies the Faber-Krahn inequality by assumption, we get
\[
\sup_{s\in[t_2,T]}\frac{1}{\lambda(E_s)}\leq 
\sup_{s\in[t_2,T]}
\frac{1}{a}m(E_s)^{\frac{2}{n}}.
\]
In order to bound $m(E_s)$, use $\xi=b_2+(b_2-b_1)/2$, $u_s\geq b_2$ on $E_s$, and $E_s\subset U$ to get
\begin{multline*}
m(E_s)
=
\frac{4}{(b_2-b_1)^2}\|\Eins_{E_s} (b_2-\xi)_+\|^2 \leq 
\frac{4}{(b_2-b_1)^2}\|\Eins_{E_s} (u_s-\xi)_+\|^2 
\leq
\frac{4}{(b_2-b_1)^2}\|\Eins_U (u_s-\xi)_+\|^2
.
\end{multline*} 

In order to bound the quantity $\|\Eins_U (u_s-\xi)_+\|^2$, we need some preparations.
Set $\varphi=\phi_{U,r/4-S}$. Since $r_1-r_2>4S$ we have  $\|\vert\nabla\varphi\vert\|_X^2\leq (r/4-S)^{-2}= 16(r_1-r_2-4S)^{-2}<\infty$.
Apply Lemma~\ref{lem:energyestimate} to $v=F(u-\xi)$ with $F=F_n$, $n\geq 1$, $\epsilon=\xi$, and $\eta=\chi\varphi$ with $\chi$ given before Lemma~\ref{lem:energyestimate}. Drop the non-negative energy term to obtain for $t\in[t_2,T]$ 
\[
4
\Vert\eta_tF_n(u_t-\xi)\Vert^{2} 
\leq 668(\sup F_n')\left(\frac{1}{t_2-t_1}+\frac{16}{(r_1-r_2-4S)^{2}}\right) 
\int_{t_1}^t \|\Eins_{ U}(u_s-\xi)_+\|^{2}
\drm s.
\]
Since $t_1<t_2\leq T$, $U\subset B_1$, and $b_1<\xi$, we have 
$$\int_{t_1}^t\|\Eins_{U}(u_s-\xi)_+\|^2 \ \drm s \leq \int_{t_1}^T\|\Eins_{B_1}(u_s-b_1)_+\|^2\ \drm s=a_1.$$
By definition, we have $\chi(t)=1$, $F_n((u_t-\xi)_+)=F_n(u_t-\xi)$, and $\sup F_n'\leq 1$. Hence, since $2672\leq 2^{12}$, we obtain
\[
\Vert\varphi F_n((u_t-\xi)_+)\Vert^{2}
=
\Vert\eta_tF_n(u_t-\xi)\Vert^{2}
\leq 2^{12}
\left(\frac{ 1}{t_2-t_1}+\frac{1}{(r_1-r_2-4S)^2}\right)
a_1.
\]
Using $\varphi\restriction_U=1$ and $F_n\to F$ uniformly, we finally get
\begin{multline*}
\|\Eins_U(u_t-\xi)_+\|^{2}
\leq \Vert \varphi (u_t-\xi)_+\Vert^2
=\lim_{n\to\infty}
 \Vert \varphi F_n((u_t-\xi)_+)\Vert^2
 \leq 
 2^{12}
\left(\frac{ 1}{t_2-t_1}+\frac{1}{(r_1-r_2-4S)^2}\right)
a_1.
\end{multline*}
Plug the result into the bound for the measure of $E_s$ to obtain the result.
\end{proof}

%
%
%
%
%
%
%
\subsection{The iteration steps and mean value inequality}\label{subsection:iteration}
%
%
%
%
%
%
%
%
%

%
%

In this section we carry out Grigor'yan's iteration procedure to obtain an $\ell^2$-mean value inequality for non-negative subsolutions of the heat equation, \cite{Grigoryan-94,GrigoryanHH}. The difficulty of the argument lies in the fact that the iteration procedure naturally stops after a certain amount of steps depending on the existence of good cut-off functions. We resolve this by incorporating the mean-value inequality from Lemma~\ref{lem:elementarypoint}. The error produced by this procedure depends on the Faber-Krahn constant and the inverted vertex measure of the ball's center, but becomes small if the argument is run on large balls.

\begin{theorem}\label{thm:mv}
Let $x\in X$, $a,n>0$, $R\geq 288S$ such that for all $U\subset B_x(R)$ 
\[
\lambda(U)\geq a  \ m(U)^{-{\frac{2}{n}}}.
\]
Then there exists  a constant $C=C_{n,S}>0$ such that for all non-negative subsolutions $u$ on $(0,T]\times B_x(R)$
\[
u_T(x)^2
\leq 
C
\left[1\vee \frac{a^{\frac{n}{2}}}{m(x)}\right]^\theta
\frac{1}{a^{\frac{n}{2}}(T\wedge R^2)^{\frac{n}{2}+1}}
 \int_0^T \sum_{B_x(R)} m u_t^2\ \drm t,
\]
where 
\[
\theta(n,R)=\frac{n+2}{n}\cdot 1\wedge \left(\frac{288S}{R}\right)^{\frac{1}{n+2}}.
\]
\end{theorem}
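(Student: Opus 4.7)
The plan is to execute the classical $\ell^{2}$ de Giorgi--Grigor'yan iteration, adapted to the discrete setting. The bulk of the iteration will use the Faber-Krahn driven elementary step of Lemma~\ref{lem:elementarystep}; since that step requires $r_{k}-r_{k+1}>4S$, it can only be performed finitely many times on a graph, and the iteration must be closed off by the purely discrete pointwise bound of Lemma~\ref{lem:elementarypoint}. Set $q=n/(n+2)\in(0,1)$ and introduce three monotone geometric sequences: radii $r_{k}$ decreasing from $R$ to $R/2$ with $r_{k}-r_{k+1}\sim R\,2^{-k}$, times $t_{k}$ increasing from $0$ to $T\wedge R^{2}$ with $t_{k+1}-t_{k}\sim (T\wedge R^{2})2^{-k}$, and levels $b_{k}$ increasing from $0$ to a free parameter $b_{\infty}>0$ with $b_{k+1}-b_{k}\sim b_{\infty}2^{-k}$. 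Define
\[
a_{k}=\int_{t_{k}}^{T}\sum_{B_{x}(r_{k})}m(u_{s}-b_{k})_{+}^{2}\,\drm s,
\]
so that $a_{0}\leq \int_{0}^{T}\sum_{B_{x}(R)}m u_{s}^{2}\,\drm s$, and let $k_{0}$ be the largest index for which $r_{k_{0}}-r_{k_{0}+1}>4S$. As in the proof of Theorem~\ref{thm:asympdoubling}, the hypothesis $R\geq 288S$ forces $k_{0}\sim\lfloor\log_{2}\sqrt{R/S}\rfloor$ with $q^{k_{0}}\leq\theta(n,R)$.

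Iterating Lemma~\ref{lem:elementarystep} for $k=0,\ldots,k_{0}-1$ yields a recursion of the form $a_{k+1}\leq C_{n}(b_{k+1}-b_{k})^{-4/n}(\text{geometric factor})\,a_{k}^{1+2/n}$. Since $1+2/n=1/q$, a standard induction using the geometric decay of the parameters and the Faber-Krahn constant $a$ produces, after $k_{0}$ steps, a bound of the shape
\[
a_{k_{0}}^{q^{k_{0}}}\leq C'_{n,S}\,b_{\infty}^{-4/n}\,\Bigl(\tfrac{1}{T\wedge R^{2}}\Bigr)^{1+2/n}\,a^{-1}\,a_{0}.
\]
For the closing step, $r_{k_{0}}>S$ still holds, so Lemma~\ref{lem:elementarypoint} applied to the pair $(B_{x}(r_{k_{0}}),\{x\})$ converts the $\ell^{2}$ information into a pointwise bound
\[
m(x)(u_{T}(x)-b_{\infty})_{+}^{2}\leq C''_{n,S}\,\Bigl(\tfrac{1}{T\wedge R^{2}}\Bigr)\,a_{k_{0}}.
\]
Combining the two estimates gives $m(x)(u_{T}(x)-b_{\infty})_{+}^{2}\leq\Psi(b_{\infty})\,a_{0}^{1/q^{k_{0}}}$ for an explicit $\Psi$. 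Optimizing $b_{\infty}$ by the standard Moser balancing trick then yields an inequality of the claimed form, with exponent $\theta=q^{k_{0}}$ inherited directly from the iteration.

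The main obstacle is the bookkeeping in the optimization. In the continuous setting one sends $k_{0}\to\infty$, so $q^{k_{0}}=0$ and no correction term appears; here $q^{k_{0}}>0$, so the exponent of $a_{0}$ in the intermediate bound is strictly greater than one. The surplus $a_{0}^{1/q^{k_{0}}-1}$, together with the $m(x)^{-1}$ factor produced by Lemma~\ref{lem:elementarypoint}, must collapse exactly into the correction $[1\vee a^{n/2}/m(x)]^{\theta(n,R)}$. The reason this collapse succeeds is that applying the Faber-Krahn inequality to the singleton $\{x\}$ gives $(a/R^{2})(m(B_{x}(R))/m(x))^{2/n}\leq 4\Deg_{x}$ (cf.~Lemma~\ref{lem:localreg}), i.e.\ a volume--degree bound of exactly the right shape to trade a missing power of $a_{0}$ for a power of $a^{n/2}/m(x)$. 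Tracking this collapse precisely, and choosing $b_{\infty}$ so that the resulting inequality has the correct homogeneity in $T\wedge R^{2}$ and $a$, is the delicate analytic content of the proof.
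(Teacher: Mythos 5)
Your architecture is exactly the paper's: iterate the Faber--Krahn step of Lemma~\ref{lem:elementarystep} along geometric sequences of radii, times and levels until the radius increments drop below $4S$, close the iteration with the purely discrete Lemma~\ref{lem:elementarypoint}, and remove the free level by Moser balancing, with $\theta$ arising as $q^{\kappa+1}$ for $\kappa\sim\lfloor\log_2\sqrt{R/S}\rfloor$. Two caveats on the sketch. First, the intermediate bound you display has the per-step exponents $b_\infty^{-4/n}$, $(T\wedge R^2)^{-(1+2/n)}$, $a^{-1}$ rather than the accumulated ones $b^{-2(1-q^{\kappa+1})}$, $(T\wedge R^2)^{-(\frac n2+1)(1-q^{\kappa+1})}$, $a^{-\frac n2(1-q^{\kappa+1})}$ coming from $\sum_{k\le\kappa}q^{k+1}=\frac n2(1-q^{\kappa+1})$; the deviation of these exponents from their ``ideal'' values by the factor $(1-q^{\kappa+1})$ is precisely where the correction term is born, so it cannot be suppressed even in a sketch.

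Second, and more seriously, the mechanism you propose for the final collapse is wrong. There is no surplus $a_0^{1/q^{k_0}-1}$ left after the balancing: raising the pointwise inequality $m(x)(u_T(x)-b)_+^2\le C\,a_{\kappa+1}$ to the power $q^{\kappa+1}$, multiplying by $b^{2(1-q^{\kappa+1})}$, inserting $a_{\kappa+1}^{q^{\kappa+1}}\le(\cdots)^{1-q^{\kappa+1}}a_0$ and choosing $b=u_T(x)/2$ yields a bound that is exactly linear in $a_0$; what remains is $m(x)^{-q^{\kappa+1}}$ paired with the leftover $(a^{\frac n2})^{q^{\kappa+1}}$ and $(T\wedge R^2)^{\frac n2 q^{\kappa+1}}$, the first two of which form $[a^{\frac n2}/m(x)]^{q^{\kappa+1}}\le[1\vee a^{\frac n2}/m(x)]^{\theta}$. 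Your proposal to ``trade a missing power of $a_0$ for a power of $a^{n/2}/m(x)$'' via the singleton Faber--Krahn inequality (Lemma~\ref{lem:localreg}) cannot work: $a_0=\int_0^T\sum_{B_x(R)}mu_t^2\,\drm t$ depends on the solution $u$ and is not controlled by any geometric quantity, so no volume--degree bound can absorb a surplus power of it. The paper's proof does not use Lemma~\ref{lem:localreg} here at all. You also omit the step of bounding $(T\wedge R^2)^{\frac n2 q^{\kappa+1}}$ by a constant $C_{n,S}$, which requires the quantitative decay $q^{\kappa+1}\lesssim(S/R)^{1/(n+2)}$ and the elementary bound on $x\mapsto x^{(1/x)^{\log_2\sqrt{1/q}}}$; this is needed for the stated form of the conclusion.
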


\begin{proof}The proof is divided into three steps. First, since the Faber-Krahn inequality still holds if we shrink the radius with the same constants $a$ and $n$, we are able to iterate Lemma~\ref{lem:elementarystep}. This is possible as long as radii are large enough. Second, when the iteration procedure stops, we use Lemma~\ref{lem:elementarypoint} to mimick the iteration procedure to radius zero. The third step then consists in gluing together the resulting inequalities and choosing the cut-off level appropriately.
\\

We start with the first step, i.e., iterating Lemma~\ref{lem:elementarystep}.
Let $T_0=T/2$, $b\geq 0$, $\delta\in(0,1]$ to be chosen later,
\[
R_k=(1+2^{-k})\frac{R}{2},\qquad b_k=(1-2^{-k})b, \qquad T_{k+1}=T_k +\delta(R_k-R_{k+1})^2,\qquad \kappa=\left\lfloor \log_2\left(\frac{1}{8}\sqrt{\frac{R}{S}}\right)\right\rfloor,
\]
and
\[
a_k=\int_{T_k}^T\| \Eins_{B_x(R_k)}  (u_s-b_k)_+ \|^2\drm s. 
\]
Since $R\geq 64S$ we have for all $k\leq \kappa=\left\lfloor \log_2\left(\frac{1}{8}\sqrt{\frac{R}{S}}\right)\right\rfloor\leq \log_2\left(\frac{1}{8}\sqrt{\frac{R}{S}}\right)$
\[
R_k-R_{k+1}=\frac{R}{4}2^{-k}
\geq \frac{R}{4}2^{-{\kappa}}\geq \frac{R}{4}2^{-\log_2\left(\frac{1}{8}\sqrt{\frac{R}{S}}\right)}
=2\sqrt {RS}
\geq 8S>4S.
\] 
Since the Faber-Krahn inequality holds in $B_x(R)$, it also holds in all balls $B_x(r)$, $r\in[0,R]$ with the same constants $a$ and $n$. Thus, setting $q:={1}/({1+{\frac{2}{n}}})={n}/({n+2})$,
 Lemma~\ref{lem:elementarystep} yields for all $k\leq \kappa$
\[
a_{k+1}\leq C_k a_k^{\frac1q},
\]
where 
\[
C_k=
\frac{2^{{\frac{28}{n}}}}{a(b_{k+1}-b_k)^{{\frac{4}{n}}}}
 \left(\frac{1}{T_{k+1}-T_k}+\frac{1}{(R_k-R_{k+1}-4S)^2}\right)^{\frac{1}{q}}.
\]
Iterating the above inequality from $k=0$ to $\kappa$, we obtain
\[
a_{\kappa+1}^{q^{\kappa+1}}
\leq \left(\prod_{k=0}^{\kappa} C_k^{(\frac{1}{q})^{\kappa-k}}\right)^{q^{\kappa+1}}
a_0
=
\left(\prod_{k=0}^{\kappa} C_k^{q^{k+1}}\right)
a_0
.
\]
We estimate the product on the right-hand side and start with $C_k$. Observe that since 
\[
b_{k+1}-b_k=b2^{-k-1},
\]
we have for the first factor of $C_k$, using $4/n\leq 2(1+2/n)=2/q$,
\[
\frac{2^{{\frac{28}{n}}}}{a(b_{k+1}-b_k)^{{\frac{4}{n}}}}
=
\frac{2^{{\frac{32}{n}}}2^{{\frac{4}{n}} k}}{ab^{{\frac{4}{n}}}}
\leq 
\frac{2^{{\frac{32}{n}}}2^{\frac{2}q k}}{ab^{{\frac{4}{n}}}}.
\]
Next, we bound the second factor. By the definition of $T_k$, we have
\[
T_{k+1}-T_k=\delta \frac{R^2}{16}4^{-k}.\]
Since $R\geq 16S$, we have $\sqrt{R/S}\leq R/(4S)$ and hence $\kappa=\log_2\left\lfloor\sqrt{{R}/{S}}/8\right\rfloor \leq \log_2\left\lfloor{R}/(32S)\right\rfloor$. This yields $16S/R\leq 2^{-(\kappa+1)}\leq 2^{-(k+1)}$ for all $k\leq \kappa$.
Thus, by the definition of $R_k$ and $\delta\leq 1$, we obtain
\[ 
R_k-R_{k+1}-4S =\frac{R}{4}\left(2^{-k}-\frac{16S}{R}\right)
\geq 
\frac{R}{4}(2^{-k}-2^{-(k+1)})
=
\frac{R}{8}2^{-k}
\geq \sqrt\delta \frac{R}{8} 4^{-k}.
\]

Hence, for the second factor of $C_k$, we get
\[
 \left(\frac{1}{T_{k+1}-T_k}+\frac{1}{(R_k-R_{k+1}-4S)^2}\right)^{\frac{1}{q}}
\\ \leq 
\left(\frac{16 \cdot 4^k}{\delta R^2}+\frac{64\cdot 16^k}{\delta R^2}\right)^{\frac{1}{q}}
\leq \left(\frac{2^{7}2^{4k}}{\delta R^{2}}\right)^{\frac{1}{q}}.
\]
Plugging in the above estimates for the factors of $C_k$ we obtain for the product
\[
\prod_{k=0}^{\kappa} C_k^{q^{k+1}}
\leq
\prod_{k=0}^{\kappa} \left(\frac{2^{{\frac{32}{n}}}2^{\frac{2}q k}}{ab^{{\frac{4}{n}}}}\left(\frac{2^{7}2^{4k}}{\delta R^{2}}\right)^{\frac1q}\right)^{{q^{k+1}}}
=
2^{6\sum_{k=0}^{\kappa}kq^{k}}
 \left(\frac{2^{7+{\frac{46}{n}}}}{ab^{{\frac{4}{n}}}(\delta R^2)^{\frac1q} }\right)^{\sum_{k=0}^{\kappa}q^{k+1}}.
\]
We have 
\[
\sum_{k=0}^{\kappa}q^{k+1}=q\frac{1-q^{\kappa+1}}{1-q}
=\frac{n}{2}(1-q^{\kappa+1}),
\qquad 
\sum_{k=0}^{\kappa}kq^{k}
=q\frac{(\kappa-1)q^{\kappa}-\kappa q^{\kappa-1}+1}{(1-q)^2}
=\frac{n^2}{4}\frac{1-({\frac{2}{n}} \kappa+1)q^{\kappa}}{q}
\leq \frac{n^2}{4}\frac{1}{q}.
\]
Together with $\frac{n}{2}(1-q^{\kappa+1})\leq \frac{n}{2}$ we obtain
\[
2^{6\sum_{k=0}^{\kappa}kq^{k}}\left(2^{7+{\frac{46}{n}}}\right)^{\sum_{k=0}^{\kappa}q^{k+1}}
\leq 2^{6\frac{n^2}{4}\frac{1}{q}}
\left(2^{7+{\frac{46}{n}}}\right)^{\frac{n}{2}}\leq 2^{n^2+5n+25}=:C_n'.
\]
Choose 
\[
\delta=\frac{T\wedge R^2}{R^2} \in (0,1]
\]
and use $nq/2=n/2+1$ to obtain
\begin{align*}
\prod_{k=0}^{\kappa} C_k^{q^{k+1}}
\leq 
C_{n}'
\left( \frac{1}{a^{\frac{n}{2}}b^{2}(\delta R^2)^{\frac{n}{2}+1}}\right)^{1-q^{\kappa+1}}
=
C_{n}'
\left(\frac{1}{
a^{\frac{n}{2}}b^2(T\wedge R^2)^{\frac{n}{2}+1}}\right)^{1-q^{\kappa+1}}
.
\end{align*}

Next, we provide the second step and bound $a_{\kappa+1}$ below by $(u_T-b)_+$. \\
Apply Lemma~\ref{lem:elementarypoint} to $b_{\kappa+1}<b$, $r_2=R/2<R_{\kappa+1}=r_1$, $t_1=T_{\kappa+1}<T$, and 
 $r_1-r_2=R_{\kappa+1}-R/2>S$, to obtain
\[
m(x)(u_T(x)-b)_+^2
\leq 
2^8
\left(
\frac{1}{T-T_{\kappa+1}}+\frac{1}{(R_{\kappa+1}-R/2-S)^2}\right)a_{\kappa+1}.
\]
In order to bound the term in parentheses from above, we estimate $T_{\kappa+1}$ and $R_{\kappa+1}-R/2-S$ and start with
$T_{\kappa+1}$. Iterating the identity
\[
T_{k}
=T_{k-1}+\delta\frac{R^2}{4}4^{-k}
\] for $k\in\NN$ and using $T_0=T/2$, $\delta=(T\wedge R^2)/R^2$, we obtain
\[
T_{k}
=T_0+\delta \frac{R^2}{4}\sum_{l=0}^{k-1}\left(\frac{1}{4}\right)^l
==T_0+\delta \frac{R^2}{4}\frac{1-4^{-(k-1)}}{1-4^{-1}}
=\frac{T}{2}+\delta \frac{R^2}{3}(1-4^{-(k-1)})
=
\frac{T}{2}+ \frac{T\wedge R^2}3(1-4^{-(k-1)}).
\]
Thus, we have $T_k\leq 2T/3$, $k\in\NN_0$, in particular $T_{\kappa+1}\leq 2T/3$.
\\
Next, we bound $R_{\kappa+1}-R/2-S$ from below.
Since $R\geq 288S$ we have $R/S\geq 1$ and thus $\sqrt{R/S}\leq R/S $. This yields $\kappa\leq \log_2(\sqrt{{R}{S}}/8)\leq\log_2\left({R}/({8S})\right)$, and therefore
$2^{-(\kappa+1)}{R}/{4}\geq S$.
Hence,
\[
R_{\kappa+1}-\frac{R}{2}-S
=
2^{-(\kappa+1)}\frac{R}{2}-S
\geq{2^{-(\kappa+1)}}\frac{R}{4}= {2^{-(\kappa+3)}}{R}.
\]
Together with the bound $T_{\kappa+1}\leq 2T/3$ from above and $3+2^{\kappa+3}\leq 2^{\kappa+4}$, we obtain
\[
(u_T(x)-b)_+^2
\leq 
\frac{2^8}{m(x)}
\left(
\frac{3}{T}+\frac{2^{\kappa+3}}{R^2}\right)a_{\kappa+1}\leq 
\frac{2^{\kappa+12}}{m(x)( T\wedge R^2)}a_{\kappa+1}.
\]

In the final third step, we put the relations above together. 
Raise the last inequality to the power $q^{\kappa+1}$, multiply by $b^{2(1-q^{\kappa+1})}$, and use $a_{\kappa+1}^{q^{\kappa+1}}\leq C_{n}'
\left( a^{\frac{n}{2}}\rho^2(T\wedge R^2)^{\frac{n}{2}+1}\right)^{-(1-{q^{\kappa+1}})} a_0$ from our first step to get
\begin{multline*}
b^{2(1-{q^{\kappa+1}})}(u_T(x)-b)_+^{2{q^{\kappa+1}}}
\leq 
b^{2(1-{q^{\kappa+1}})}\left(\frac{2^{\kappa+12}}{m(x) (T\wedge R^2)}
\right)^{{q^{\kappa+1}}} 
C_{n}'
\left( \frac{1}{a^{\frac{n}{2}}b^2(T\wedge R^2)^{\frac{n}{2}+1}}\right)^{1-{q^{\kappa+1}}} a_0
\\
= C_{n}'
2^{(\kappa+12){q^{\kappa+1}}}(T\wedge R^2)^{\frac{n}{2}{q^{\kappa+1}}}\left[\frac{a^{\frac{n}{2}}}{m(x)}\right]^{q^{\kappa+1}}
\frac{
1}{a^{\frac{n}{2}}(T\wedge R^2)^{\frac{n}{2}+1}} a_0.
\end{multline*}
Choose $b=u_T(x)/2$ to obtain
\[
u_T(x)^2
\leq 
4 C_{n}'2^{(\kappa+12){q^{\kappa+1}}} (T\wedge R^2)^{\frac{n}{2}{q^{\kappa+1}}}
\left[\frac{a^{\frac{n}{2}}}{m(x)}\right]^{q^{\kappa+1}}
\frac{
1}{a^{\frac{n}{2}}(T\wedge R^2)^{\frac{n}{2}+1}}
a_0.
\]

Finally, we estimate the terms involving ${q^{\kappa+1}}$ in the upper bound. To this end, observe 
\[
f(\kappa)=\kappa{q^{\kappa+1}} 
\]
attains its maximium at $\kappa_0=1/(\ln(1/q))$ with 
$f(\kappa_0)=1/[\euler(1+{\frac{2}{n}})\ln(1+{\frac{2}{n}})]^{-1}\leq 1/\ln(1+{\frac{2}{n}})\leq n+2$, where we used $\euler(1+{\frac{2}{n}})\geq 1$ and the mean value theorem in the last estimate. Since $q\leq 1$ we have $q^{\kappa+1}\leq 1$. Hence,
\[
(\kappa+12){q^{\kappa+1}} \leq n+2+12=n+14.
\]
Estimate the power of two in the estimate for $u_T$ above by this quantity. Now we turn to more precise estimates on $q^{\kappa+1}$. 
Since 
$\kappa= \left\lfloor\log_2\sqrt{{R}/{64S}}\right\rfloor\geq\log_2\sqrt{{R}/{64S}}-1$ and $q\leq 1$, we have 
\[
q^{\kappa+1}\leq 
q^{\log_2\sqrt{{R}/{64S}}}
=\left(\frac{64S}{R}\right)^{\log_2\sqrt{1/q}}
\leq \theta(n,R),
\]
where we use in the last step the same reasoning as in Theorem~\ref{thm:asympdoubling}.
Furthermore, by defining the function $f(x)=x^{\left(1/x\right)^{\log_2\sqrt{1/q}}} $, we obtain, using $n/q=n(n+2)/n=n+2$,
\[
(T\wedge R^2)^{\frac{n}{2}q^{\kappa+1}}
\leq 
1\vee R^{n\left(\frac{64S}{R}\right)^{\log_2\sqrt{1/q}}}
=
1\vee (64S)^{n\left(\frac{64S}{R}\right)^{\log_2\sqrt{1/q}}}f(R/64S)^n\leq (1\vee S)^n2^{n^2+2n},
\]
where we used that $R\geq S$ yields  $(64S)^{n\left({S}/{R}\right)^{\log_2\sqrt{1/q}}}\leq 2^{6n}(1\vee S)^{n}$ and $f\leq 2^{n+2}$, cf.~proof of Theorem~\ref{thm:asympdoubling}. Applying these estimates and setting $C_{n,S}=(1\vee S)^n2^{2n^2+27n+51}$ yields the claim.
\end{proof}

\section{Integral maximum principle}\label{section:intmaxprinciple}

In this section, we prove the integrated maximum principle for locally finite graphs supporting an intrinsic metric with finite jump size. First results in this direction go back to \cite{Delmotte-99, Folz-11, BauerHuaYau-15}, transferring Grigor'yan's original result from manifolds to graphs using Hamilton-Jacobi inequalities related to manifolds. However, these results are not optimal and do not incorporate the bottom of the spectrum of the Laplacian. The Davies-Gaffney-Grigor'yan lemma was then proved in  \cite{BauerHuaYau-17}. By a refined analysis of these results, we provide integral maximum principles for all functions satisfying an adapted Hamilton-Jacobi equation. 
\\

Define
\[
\mathrm d\Gamma(\phi,\psi)(x)
:=
\sum_{y\in X}\frac{b(x,y)}{m(x)}\vert \nabla_{xy}\phi\nabla_{xy}\psi\vert, \quad x\in X.
\]

The proofs of the next results we will make use of the following elementary lemma.
\begin{lemma}\label{lem:elementary}Let $u,\omega\in\cC(X)$ and $x,y\in X$. The following hold.
\begin{enumerate}[(i)]
\item 
$
\nabla_{xy}u\nabla_{xy}(u\euler^{\omega})
=\vert \nabla_{xy}(u\euler^{\frac{\omega}2})\vert^2-u(x)u(y)\vert\nabla_{xy}\euler^{\frac{\omega}2}\vert^2.$
\item 
$
\vert \nabla_{xy}\euler^{\frac{\omega}2}\nabla_{xy}\euler^{-\frac{\omega}2}\vert=2\left(\cosh\left(\frac{\omega(x)-\omega(y)}2\right)-1\right)
=
\euler^{-\frac{1}{2}(\omega(x)+\omega(y))}\vert\nabla_{xy}\euler^{\frac{\omega}2}\vert^2.
$
\item 
$
u(x)u(y)\vert\nabla_{xy}\euler^{\frac{\omega}2}\vert^2
\leq 
\frac{1}{2}u(x)^2\euler^{\omega(x)}\vert \nabla_{xy}\euler^{\frac{\omega}2}\nabla_{xy}\euler^{-\frac{\omega}2}\vert
+
\frac{1}{2}u(y)^2\euler^{\omega(y)}\vert \nabla_{xy}\euler^{\frac{\omega}2}\nabla_{xy}\euler^{-\frac{\omega}2}\vert.
$
\end{enumerate}
\end{lemma}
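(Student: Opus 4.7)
The plan is to verify each claim by direct algebraic expansion, since all three parts are purely pointwise identities (or elementary inequalities) in the four real numbers $u(x), u(y), \omega(x), \omega(y)$. No graph structure, no summation, and no analysis enter the argument, so the proof is essentially bookkeeping.

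For (i), I would expand the left-hand side as $(u(x)-u(y))(u(x)\euler^{\omega(x)} - u(y)\euler^{\omega(y)})$ and the two terms on the right-hand side via $(a-b)^2 = a^2 - 2ab + b^2$. The mixed contributions of the form $u(x)u(y)\euler^{(\omega(x)+\omega(y))/2}$ appear with opposite signs in $|\nabla_{xy}(u\euler^{\omega/2})|^2$ and in $u(x)u(y)|\nabla_{xy}\euler^{\omega/2}|^2$, so they cancel in the difference, leaving the same four monomials as on the left.

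For (ii), I would first compute $(\euler^{\omega(x)/2} - \euler^{\omega(y)/2})(\euler^{-\omega(x)/2} - \euler^{-\omega(y)/2}) = 2 - \euler^{(\omega(x)-\omega(y))/2} - \euler^{(\omega(y)-\omega(x))/2} = 2 - 2\cosh((\omega(x)-\omega(y))/2)$. Since $\cosh \geq 1$, this product is non-positive, so taking absolute values yields the first equality. The second equality then follows from factoring $\euler^{(\omega(x)+\omega(y))/2}$ out of $|\nabla_{xy}\euler^{\omega/2}|^2 = \euler^{\omega(x)} - 2\euler^{(\omega(x)+\omega(y))/2} + \euler^{\omega(y)}$, which reproduces the same hyperbolic expression up to the prefactor $\euler^{(\omega(x)+\omega(y))/2}$.

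For (iii), I would use the second equality in (ii) to rewrite $|\nabla_{xy}\euler^{\omega/2}|^2 = \euler^{(\omega(x)+\omega(y))/2}\,|\nabla_{xy}\euler^{\omega/2}\nabla_{xy}\euler^{-\omega/2}|$ and then apply Young's inequality $2ab \leq a^2 + b^2$ with $a = |u(x)|\euler^{\omega(x)/2}$, $b = |u(y)|\euler^{\omega(y)/2}$, combined with the trivial bound $u(x)u(y) \leq |u(x)u(y)|$, to split the product between $x$ and $y$. The only subtlety anywhere in the lemma is sign-tracking in (ii) and (iii): the gradient-product is non-positive by the $\cosh$-identity, while $u(x)u(y)$ may have either sign, and one has to keep the absolute values in the right places. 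Beyond that, the argument is entirely routine.
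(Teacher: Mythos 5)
Your proposal is correct and follows essentially the same route as the paper: parts (i) and (ii) by direct pointwise expansion, and part (iii) by rewriting $\vert\nabla_{xy}\euler^{\omega/2}\vert^2$ via the second identity in (ii) and then applying $ab\leq\tfrac12(a^2+b^2)$. Your extra care with the sign of the gradient product (non-positive by the $\cosh$ identity) and with $u(x)u(y)\leq\vert u(x)u(y)\vert$ is exactly the bookkeeping the paper leaves implicit.
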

\begin{proof}
Items (i) and (ii) follow by direct computation. For (iii), use (ii) to get 
\[u(x)u(y)\vert\nabla_{xy}\euler^{\omega/2}\vert^2
=
u(x)\euler^{\frac{1}{2}\omega(x)}u(y)\euler^{\frac{1}{2}\omega(y)}\euler^{-\frac{1}{2}(\omega(x)+\omega(y))}\vert\nabla_{xy}\euler^{\omega/2}\vert^2
\\
=
u(x)\euler^{\frac{1}{2}\omega(x)}u(y)\euler^{\frac{1}{2}\omega(y)}\vert \nabla_{xy}\euler^{\frac{\omega}2}\nabla_{xy}\euler^{-\frac{\omega}2}\vert
\]
and apply $ab\leq \frac{1}{2}(a^2+b^2)$, $a,b\in\RR$.
\end{proof}

First, we show the integral maximum principle on finite subsets. A standard exhaustion argument yields the integral maximum principle on the whole graph. 
\\
To this end, we consider the heat equation subject to Dirichlet boundary conditions on subsets of a graph.
Let $I\subset \RR$, $u\colon I\times X\to \RR$ be continuous, and $B\subset X$. We say that $u$ solves the Dirichlet problem on $I\times B$ if 
\[
\begin{cases}
\frac{d}{dt} u= -\Delta^B u &\text{on} \ I^\circ\times B,\\
u(0,\cdot)=u\restriction_B (0,\cdot),&\\
u=0 & \text{on} \  I\times (X\setminus B),
\end{cases}
\]
where $\Delta^B$ denotes the Dirichlet Laplacian associated to $B$. Solutions to the above problem have the following property.

\begin{lemma}Let $I\subset\RR$ be an interval, $B\subset X$ finite, and let $u\geq 0$ solve the Dirichlet problem on $I\times B$. Further, let $\omega\colon I\times B \to\RR$ be continuous such that $t\mapsto \omega_t(x):=\omega(t,x)$ is continuously differentiable on $I^\circ$ for all $x\in B$  and such that 
\[
\mathrm d\Gamma(\euler^{\frac{\omega}2},\euler^{-\frac{\omega}2})\leq -\frac{d}{dt}\omega \quad\text{on} \ I^\circ\times B.
\]
Then the function
\[
\Xi\colon I\to [0,\infty), \quad t\mapsto  \euler^{2\lambda(B)t}\Vert u_t\euler^{\omega_t/2}\Vert^2
\]
is non-increasing. 
\end{lemma}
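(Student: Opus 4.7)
The plan is to differentiate $\Xi$ directly, plug in the Dirichlet heat equation, and use the three identities of Lemma~\ref{lem:elementary} together with the Hamilton-Jacobi-type hypothesis and the Rayleigh-quotient characterisation of $\lambda(B)$ to force the derivative to be non-positive.

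First, since $B$ is finite the sum defining $\Xi$ has only finitely many terms, so one may differentiate under the sum and obtain
\[
\frac{d}{dt}\Xi(t)=\euler^{2\lambda(B)t}\Bigl[2\lambda(B)\|u_t\euler^{\omega_t/2}\|^2-2\langle u_t\euler^{\omega_t},\Delta^B u_t\rangle+\sum_{x\in B}m(x)u_t^2(x)\euler^{\omega_t(x)}\tfrac{d}{dt}\omega_t(x)\Bigr],
\]
using $\partial_t u_t=-\Delta^B u_t$. By symmetrisation (Green's formula) applied to the pairing $\langle u_t\euler^{\omega_t},\Delta^B u_t\rangle$ and the identity in Lemma~\ref{lem:elementary}(i), the cross term splits as
\[
2\langle u_t\euler^{\omega_t},\Delta^B u_t\rangle
=\||\nabla(u_t\euler^{\omega_t/2})|\|^2
-\sum_{x,y}b(x,y)u_t(x)u_t(y)|\nabla_{xy}\euler^{\omega_t/2}|^2.
\]

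Next I would handle the remaining "bad" term $\sum_{x,y}b(x,y)u_t(x)u_t(y)|\nabla_{xy}\euler^{\omega_t/2}|^2$ by applying Lemma~\ref{lem:elementary}(iii) vertex-wise and exploiting the symmetry $b(x,y)=b(y,x)$ to reduce it to
\[
\sum_{x,y}b(x,y)u_t(x)u_t(y)|\nabla_{xy}\euler^{\omega_t/2}|^2
\leq\sum_{x\in B}m(x)u_t^2(x)\euler^{\omega_t(x)}\mathrm d\Gamma(\euler^{\omega_t/2},\euler^{-\omega_t/2})(x),
\]
at which point the assumed differential inequality $\mathrm d\Gamma(\euler^{\omega_t/2},\euler^{-\omega_t/2})\leq -\tfrac{d}{dt}\omega_t$ kicks in and exactly cancels the $\partial_t\omega_t$ term in the formula for $\tfrac{d}{dt}\Xi$.

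After this cancellation only
\[
\frac{d}{dt}\Xi(t)\leq\euler^{2\lambda(B)t}\Bigl[2\lambda(B)\|u_t\euler^{\omega_t/2}\|^2-\||\nabla(u_t\euler^{\omega_t/2})|\|^2\Bigr]
\]
remains. Since $u_t$ vanishes off $B$, the function $u_t\euler^{\omega_t/2}$ is supported in $B$, so the variational definition of $\lambda(B)$ yields $\|\,|\nabla(u_t\euler^{\omega_t/2})|\,\|^2\geq 2\lambda(B)\|u_t\euler^{\omega_t/2}\|^2$, and the bracket is non-positive. The main obstacle is bookkeeping: making sure the factors of $2$ from symmetrisation of $\langle\cdot,\Delta\cdot\rangle$, the factor in $\lambda(U)=\tfrac12\||\nabla f|\|^2/\|f\|^2$, and the two applications of Lemma~\ref{lem:elementary}(ii)--(iii) all align so that the spectral term is precisely absorbed; everything else is a direct calculation. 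A standard approximation then lets one drop the regularity assumption on $\omega$ beyond what is stated, but in this finite-$B$ statement it is not needed.
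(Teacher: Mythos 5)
Your proposal is correct and follows essentially the same route as the paper's proof: differentiate the weighted norm, apply Green's formula together with Lemma~\ref{lem:elementary}(i) to split the cross term, control the mixed term via Lemma~\ref{lem:elementary}(iii) and the eikonal hypothesis, and absorb the energy term using the Rayleigh-quotient definition of $\lambda(B)$ (whose factor $\tfrac12$ indeed matches the factor $2$ from symmetrisation, as you anticipated). The only cosmetic difference is that the paper first proves $\xi'\leq-2\lambda(B)\xi$ for $\xi(t)=\Vert u_t\euler^{\omega_t/2}\Vert^2$ and then multiplies by $\euler^{2\lambda(B)t}$, whereas you differentiate $\Xi$ directly.
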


\begin{proof}
Set
\[\xi(t)= \Vert u_t\euler^{\omega_t/2}\Vert^2, \quad t\in I.
\]
First, we show $\xi'\leq -\lambda(B)\xi$. Then we use $\Xi(t)=\euler^{2\lambda(B)t}\xi(t)$, $t\in I$, to conclude.
\\
Since $u$ solves the Dirichlet problem on $B$, we obtain
\[
\xi'(t)=\frac{d}{dt}\sum_X m \ u^2_t\euler^{\omega_t}
= 2\sum_X m \ u_t\frac{d}{dt}u_t\euler^{\omega_t}+\sum_X m \ u_t^2\euler^{\omega_t}\frac{d}{dt} \omega_t
=
-2\sum_X m \ u_t\Delta u_t\euler^{\omega_t}+
\sum_X m \ u_t^2\euler^{\omega_t}\frac{d}{dt} \omega_t.
\]
Now, we show that the first sum can be bounded by the negative of the second sum, what leads to the claim. To this end, apply Green's formula and 
Lemma~\ref{lem:elementary} (i)
to the first sum to obtain
\begin{multline*}
-2\sum_X m \ u_t\Delta u_t\euler^{\omega_t}
=-\sum_{x,y\in X} b(x,y) \nabla_{xy}u_t\nabla_{xy}(u_t\euler^{\omega_t})
\\
=-\sum_{x,y\in X} b(x,y) \vert \nabla_{xy}(u_t\euler^{\omega_t/2})\vert^2
+
\sum_{x,y\in X} b(x,y)u_t(x)u_t(y)\vert\nabla_{xy}\euler^{\omega_t/2}\vert^2.
\end{multline*}
In order to treat the first summand, note that the first  Dirichlet eigenvalue of $B$ is given by the infimum of the Rayleigh quotient. Hence, we obtain
\[
-\sum_{x,y\in X} b(x,y) \vert \nabla_{xy}(u_t\euler^{\omega_t/2})\vert^2
\leq -2\lambda(B)\Vert u_t\euler^{\omega_t/2}\Vert_2^2=-2\lambda(B)\xi(t).
\] 
Note that the 2 appears due to Green's formula.
\\
Next, we turn to the second sum. 
Use Lemma~\ref{lem:elementary} (iii) 
to obtain by
multiplying by $b(x,y)$ and summing up 
\begin{multline*}
\sum_{x,y\in X} b(x,y)u_t(x)u_t(y)\vert\nabla_{xy}\euler^{\omega_t/2}\vert^2
\\
\leq 
\frac{1}{2}\sum_{x,y\in X} b(x,y)u_t(x)^2\euler^{\omega_t(x)}\vert \nabla_{xy}\euler^{\frac{\omega}2}\nabla_{xy}\euler^{-\frac{\omega}2}\vert
+
\frac{1}{2}\sum_{x,y\in X} b(x,y)u_t(y)^2\euler^{\omega_t(y)}\vert \nabla_{xy}\euler^{\frac{\omega}2}\nabla_{xy}\euler^{-\frac{\omega}2}\vert
\\
=
\sum_{X} m u_t^2\euler^{\omega_t}
\drm \Gamma(\euler^{\frac{\omega}2},\euler^{-\frac{\omega}2})\leq -\sum_X m \ u_t^2\euler^{\omega_t}\frac{d}{dt}\omega_t,
\end{multline*}
where we used the eikonal inequality for $\omega$ and the non-negativity of $u$. Hence, we obtain the inequality $\xi'(t)\leq -2\lambda(B) \xi(t)$, $t\in I^\circ$. 
\\
Since
$\Xi(t)=\euler^{2\lambda(B)t}\xi(t)$, $t\in I$, the above estimate on $\xi'$ yields
\[
\Xi'(t)=\euler^{2\lambda(B)t}(2\lambda(B)\xi(t)+\xi'(t))\leq 0,
\]
whence the claim.
\end{proof}

A simple exhaustion argument yields the following integral maximum principle, cf.~\cite[Section~3]{BauerHuaYau-15}.

\begin{theorem}[Integral maximum principle]\label{thm:intmaxprinciple}Assume that $u\geq 0$ solves the heat equation on $[0,\infty)\times X$  with $u(0,\cdot)\in\ell^2(X,m)$.
Let $\omega\colon [0,\infty)\times X\to\RR$ continuous such that $t\mapsto \omega_t(x):=\omega(t,x)$ is continuously differentiable on $(0,\infty)$ with continuous extension at zero for all $x\in X$ and such that  
\[
\mathrm d\Gamma(\euler^{\frac\omega2},\euler^{-\frac\omega2})\leq -\frac{d}{dt}\omega \quad\text{on}\ (0,\infty)\times X.
\]
Then the function
\[
\Xi\colon I\to [0,\infty), \quad t\mapsto  \euler^{2\Lambda t}\Vert u_t\euler^{\omega_t/2}\Vert_2^2
\]
is non-increasing. 
\end{theorem}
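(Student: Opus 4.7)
The plan is a standard exhaustion argument building on the preceding lemma. Take an increasing sequence of finite sets $B_n \subset X$ with $\bigcup_n B_n = X$, and for each $n$ let $u^{(n)}$ be the solution of the Dirichlet problem on $[0,\infty)\times B_n$ with initial data $u_0\restriction_{B_n}$, extended by zero to $X$. Writing $u^{(n)}_t = \euler^{-t\Delta^{B_n}}u_0\restriction_{B_n}$, standard monotonicity of Dirichlet heat kernels in the domain, combined with non-negativity of $u_0$ and uniqueness in $\ell^2(X,m)$, yields $u^{(n)}_t \uparrow u_t$ pointwise as $n\to\infty$, where $u_t = \euler^{-t\Delta}u_0$ is the hypothesised solution.

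Next I use that the Hamilton-Jacobi inequality $\mathrm d\Gamma(\euler^{\omega/2},\euler^{-\omega/2}) \leq -\tfrac{d}{dt}\omega$ is a pointwise condition on $(0,\infty)\times X$, so it certainly holds as a condition on $(0,\infty)\times B_n$ (the operator $\mathrm d\Gamma$ uses the full graph structure, but is only evaluated at points of $B_n$, where the requisite hypotheses for the preceding lemma are satisfied). Applying that lemma to $u^{(n)}$ and (the restriction of) $\omega$ shows that
\[
t \longmapsto \euler^{2\lambda(B_n)t}\|u^{(n)}_t\euler^{\omega_t/2}\|^2
\]
is non-increasing, so that for $0 \leq s \leq t$
\[
\euler^{2\lambda(B_n)t}\|u^{(n)}_t\euler^{\omega_t/2}\|^2 \;\leq\; \euler^{2\lambda(B_n)s}\|u^{(n)}_s\euler^{\omega_s/2}\|^2.
\]

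Finally I let $n\to\infty$. By monotonicity of $\lambda$ under set inclusion and the variational identity $\Lambda = \inf_{U\subset X}\lambda(U)$, we have $\lambda(B_n)\downarrow \Lambda$. Since $u^{(n)}_\tau \uparrow u_\tau$ pointwise and $\euler^{\omega_\tau/2} \geq 0$, monotone convergence gives $\|u^{(n)}_\tau\euler^{\omega_\tau/2}\|^2 \uparrow \|u_\tau\euler^{\omega_\tau/2}\|^2$ in $[0,\infty]$ for every $\tau$. Passing to the limit in the displayed inequality gives $\euler^{2\Lambda t}\|u_t\euler^{\omega_t/2}\|^2 \leq \euler^{2\Lambda s}\|u_s\euler^{\omega_s/2}\|^2$, which is the claimed monotonicity of $\Xi$.

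The only subtle point is the passage to the limit when the $\ell^2$-weighted norms may be infinite: if $\|u_s\euler^{\omega_s/2}\|^2 = \infty$ the bound is vacuous, while if it is finite the increasing sequence on the left is dominated by a finite limit on the right and the non-strict inequality is preserved. A secondary bookkeeping task is verifying that the monotone convergence $u^{(n)}_t \uparrow u_t$ and the extension-by-zero indeed produce the free semigroup in the limit under local finiteness; this is standard and contained, e.g., in the treatment of Dirichlet Laplacians in \cite{KellerLW-21}.
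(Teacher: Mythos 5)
Your proposal is correct and is precisely the argument the paper intends: the paper only remarks that ``a simple exhaustion argument yields the integral maximum principle'' from the preceding finite-set lemma, and you have filled in exactly that exhaustion — monotone approximation by Dirichlet solutions $u^{(n)}_t\uparrow u_t$, the lemma on each $B_n$, $\lambda(B_n)\downarrow\Lambda$, and monotone convergence in the weighted norms, with the correct handling of possibly infinite limits. No discrepancy with the paper's route.
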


\section{Weighted $\ell^2$-norm of the heat kernel}\label{section:weightedl2norm}
In this paragraph, we bound certain weighted $\ell^2$-norms of the heat kernel provided Faber-Krahn inequalities hold in a ball. The original idea 
goes back to Grigor'yan on manifolds, cf.~\cite{Grigoryan-09}. However, due to the structure of the solutions of the graph version of the Hamilton-Jacobi equation in Section~\ref{section:intmaxprinciple},  the weights originally appear on manifolds do not lead to the sharp Gaussian expected by Davies' and Pang's results. The situation however is not completely hopeless since the Davies-Gaffney-Grigor'yan lemma was proven for graphs in \cite{BauerHuaYau-17}. Here, we will modify the weighted $\ell^2$-norm of the heat kernel such that they fit our new version of the Hamilton-Jacobi equation.

Another obstacle which appears is that the Faber-Krahn inequality yields bounds on weighted $\ell^2$-norms only for  large times. The argument for obtaining heat kernel bounds however requires bounds on these norms for all times. We solve this issue by incorporating a bound on weighted $\ell^2$-norms which always hold for all times on graphs. This can be found in Theorem~\ref{thm:integratedheat}, the final result of this section. 
\\

For $\eta>0$, we let
\[
h(\eta)=S^{-2}\big(\cosh(\eta S)-1\big),
\]
where $S$ is the jump size of the intrinsic metric $\rho$, 
and moreover for  $x\in X$, $t>0$, we define the weighted $\ell^2$-norm of the heat kernel by
\[
E_\eta(x,t)=\sum_{z\in X}m(z)
p_t(x,z)^2
\euler^{
2\eta\rho(x,z)-2th(\eta)}.
\]

Our estimates on these norms are based on the following observations, which are elaborations on corresponding results in \cite{Grigoryan-09}.
\begin{lemma}\label{lem:eikonalheatkernelpreliminary}
Let  $R,T,\nu\geq 0$, $o\in X$, and $f\colon X\to (0,\infty)$. 
Assume that for all non-negative subsolutions $u$ on $[0,T]\times B_o(R)$ we have
\[
u_T(o)^2\leq \nu\sum_X m \ u_0^2f.
\]
Then we have
\[
\sum_X m \ p_T(o,\cdot)^2f^{-1}\leq \nu.
\]
\end{lemma}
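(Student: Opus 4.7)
The natural strategy is a duality argument: apply the hypothesis to the heat flow generated by compactly supported non-negative test functions and optimize the choice of test function to recover the weighted $\ell^{2}$-norm of $p_T(o,\cdot)$.

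\textbf{Step 1: Test functions produce valid subsolutions.} For any non-negative $g\in\cC_{c}(X)$, set $u_t:=\euler^{-t\Delta}g$. Since $g$ is compactly supported and the graph is locally finite, $g\in\ell^2(X,m)$ and a combination of spectral calculus for $\Delta$ on $\ell^2(X,m)$ with local finiteness shows that $t\mapsto u_t(x)$ is smooth on $[0,\infty)$ for each $x\in X$ and that $u$ solves the heat equation pointwise on $[0,\infty)\times X$; in particular, $u$ is a non-negative subsolution on $[0,T]\times B_o(R)$. Moreover, $u_0=g$ and, using symmetry of the heat kernel,
\[
u_T(o)=\sum_{z\in X}m(z)\,p_T(o,z)\,g(z).
\]
The hypothesis therefore reduces to the bilinear inequality
\[
\Bigl(\sum_{z\in X}m(z)\,p_T(o,z)\,g(z)\Bigr)^{\!2}
\;\leq\;\nu\sum_{z\in X}m(z)\,g(z)^2 f(z),\qquad g\in\cC_c(X),\ g\geq 0.
\]

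\textbf{Step 2: Optimal test function.} Choose $g=g_n:=p_T(o,\cdot)\,f^{-1}\,\Eins_{B_o(n)}$, which is compactly supported and non-negative since $p_T\geq 0$ and $f>0$. Writing
\[
A_n:=\sum_{z\in B_o(n)}m(z)\,p_T(o,z)^2 f(z)^{-1},
\]
a direct computation shows that both sides of the bilinear inequality evaluate to $A_n^2$ and $\nu A_n$ respectively. Hence $A_n^2\leq\nu A_n$, so $A_n\leq\nu$. Monotone convergence in $n$ yields
\[
\sum_{z\in X}m(z)\,p_T(o,z)^2 f(z)^{-1}=\lim_{n\to\infty}A_n\leq\nu,
\]
which is the claim.

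\textbf{Main obstacle.} The only delicate point is checking that $u_t=\euler^{-t\Delta}g$ really qualifies as a subsolution in the pointwise sense demanded before the lemma — continuous differentiability of $t\mapsto u_t(x)$ on $(0,T)$ with continuous extension to $[0,T]$, and the pointwise identity $\partial_t u_t(x)+\Delta u_t(x)=0$. For compactly supported $g$ this is standard on locally finite graphs: $\euler^{-t\Delta}g\in\mathrm{dom}(\Delta^k)$ for every $k$, $\ell^2$-differentiability transfers to pointwise differentiability because pointwise evaluation is continuous on $\ell^2(X,m)$ (since $m(x)>0$), and $\Delta$ acts as a pointwise finite sum by local finiteness. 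With this verification in hand, the duality computation above is entirely routine.
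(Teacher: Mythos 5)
Your proof is correct and follows essentially the same route as the paper: the paper also plugs in the initial datum $u_0=p_T(o,\cdot)f^{-1}\phi$ for a cut-off function $\phi$, obtains $A^2\leq \nu A$ with $A=\sum_X m\,p_T(o,\cdot)^2f^{-1}\phi$ via $\phi^2\leq\phi$, and then lets $\phi\to 1$; your choice $\phi=\Eins_{B_o(n)}$ is just a particular instance of this. Your additional verification that $\euler^{-t\Delta}g$ is a genuine pointwise subsolution is a point the paper leaves implicit, and it is handled correctly.
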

\begin{proof}
Let $\phi$ be a cut-off function and 
\[
u_0=p_T(o,\cdot)f^{-1}\phi.
\]
Then the assumption applied to the heat equation with initial datum $u_0$ yields
\[
\left(\sum_{X}m \ p_T(o,\cdot)^2f^{-1}\phi\right)^2
=u_T(o)^2
\\
\leq 
\nu
 \sum_{X} \ m\ u_0^2f
=
\nu
 \sum_{X} \ m\ p_T(o,\cdot)^2f^{-1}\phi^2.
\]
Since $\phi$ is an arbitrary cut-off function and hence $\phi^2\leq \phi$, cancelling the sum on both sides yields the claim.
\end{proof}

On graphs with an intrinsic metric, we have a pointwise mean-value inequality which holds for all times and vertices. A weighted $\ell^2$-norm estimate follows  directly from the integrated maximum principle. If there is an $\ell^2$-mean value inequality for subsolutions of the heat equation, a slightly different argument leads to a different estimate.
\begin{lemma}\label{lem:eikonalheatkernel}
Let  $R,T\geq 0$, $o\in X$, and
$\omega\colon [0,T]\times X\to\RR$ be continuous such that $t\mapsto \omega_t(x):=\omega(t,x)$ is continuously differentiable on $(0,T)$ with continuous extension on $[0,T]$, $x\in X$, $\omega_t\restriction_{B_x(R)}\geq 0$
for any $t\in[0,T]$, 
and 
\[
\mathrm d\Gamma(\euler^{\frac{\omega}2},\euler^{-\frac{\omega}2})\leq -\frac{d}{dt}\omega \quad\text{on}\ (0,T)\times X.
\] 
\begin{enumerate}[(i)]
\item We have
\[
\sum_{X}m \ p_T(o,\cdot)^2\euler^{-\omega_0}
\leq 
\frac{1}{m(o)}
 .
\]
\item If for all non-negative subsolutions $u$ on $[0,T]\times B_o(R)$ we have
\[
u_T(o)^2
\leq 
\sigma
\int_{0}^T \sum_{B_o(R)} \ m\ u_t^2 \ \drm t, 
\]
then
\[
\sum_{X}m \ p_T(o,\cdot)^2\euler^{-\omega_0}
\leq 
\sigma T.
\]
\end{enumerate}
\end{lemma}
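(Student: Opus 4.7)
The plan is to reduce both parts to the integral maximum principle (Theorem~\ref{thm:intmaxprinciple}), invoking the abstract Lemma~\ref{lem:eikonalheatkernelpreliminary} only for part~(ii). A key auxiliary observation I would establish first is that if $\omega$ satisfies the Hamilton--Jacobi inequality on $[0,T]\times X$, then so does the time-reversed function $\tilde\omega_s := -\omega_{T-s}$. Indeed, $\drm\Gamma(\euler^{\phi/2},\euler^{-\phi/2})$ is symmetric under $\phi\leftrightarrow -\phi$, so its left-hand side is unchanged when $\omega$ is replaced by $-\omega$, while the chain rule gives $\tfrac{\drm}{\drm s}\tilde\omega_s = \tfrac{\drm\omega_t}{\drm t}\big|_{t=T-s}$, so the inequality matches term-by-term.

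For part~(i), I would apply Theorem~\ref{thm:intmaxprinciple} with $\tilde\omega$ to the non-negative solution $v_t(x):=p_t(o,x)$, whose initial datum $v_0 = m(o)^{-1}\Eins_{\{o\}}$ lies in $\ell^2(X,m)$. The monotonicity of the associated $\Xi$ together with $\Lambda\geq 0$ yields
\[
\sum_{x\in X} m(x)\,p_T(o,x)^2\,\euler^{-\omega_0(x)} \;\leq\; \sum_{x\in X} m(x)\,v_0(x)^2\,\euler^{-\omega_T(x)} \;=\; \frac{\euler^{-\omega_T(o)}}{m(o)}.
\]
Since $o\in B_o(R)$, the hypothesis $\omega_T\restriction_{B_o(R)}\geq 0$ forces $\euler^{-\omega_T(o)}\leq 1$, which completes~(i).

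For part~(ii), I would invoke Lemma~\ref{lem:eikonalheatkernelpreliminary} with $f=\euler^{\omega_0}$ and $\nu=\sigma T$. It suffices to check that the specific non-negative $\ell^2$-solution $u$ produced in the proof of that lemma (a solution on $X$, hence a subsolution on $B_o(R)\times[0,T]$) satisfies $u_T(o)^2 \leq \sigma T\sum_X m\,u_0^2\,\euler^{\omega_0}$. Combining the assumed mean-value inequality with the integral maximum principle applied directly to $\omega$ gives, for every $t\in[0,T]$,
\[
\sum_{B_o(R)} m\,u_t^2 \;\leq\; \sum_{X} m\,u_t^2\,\euler^{\omega_t} \;\leq\; \sum_{X} m\,u_0^2\,\euler^{\omega_0},
\]
where the first step uses $\omega_t\restriction_{B_o(R)}\geq 0$. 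Integrating in $t$ over $[0,T]$ and inserting into the mean-value inequality supplies the hypothesis of Lemma~\ref{lem:eikonalheatkernelpreliminary}, and its conclusion is precisely the desired bound.

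The conceptually most delicate step is the time-reversal of $\omega$ used in~(i); once the symmetry of $\drm\Gamma$ is exploited, both parts are short bookkeeping on top of Theorem~\ref{thm:intmaxprinciple} and Lemma~\ref{lem:eikonalheatkernelpreliminary}. A minor technical point to record is that the localized subsolution hypothesis in~(ii) is indeed satisfied by the globally defined $\ell^2$-solution $u$ chosen inside the proof of Lemma~\ref{lem:eikonalheatkernelpreliminary}, which is immediate from the definition of subsolution.
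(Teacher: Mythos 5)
Your proposal is correct. Part (ii) is essentially the paper's argument: chain the assumed mean-value inequality with the integral maximum principle for $\omega$ (using $\omega_t\restriction_{B_o(R)}\geq 0$ to pass from the ball to the weighted global sum), integrate in time, and feed the resulting bound with $\nu=\sigma T$, $f=\euler^{\omega_0}$ into Lemma~\ref{lem:eikonalheatkernelpreliminary}; your remark that only the specific global solution constructed inside that lemma's proof needs to satisfy the hypothesis is a fair reading. Part (i), however, takes a genuinely different route. The paper proves for \emph{every} non-negative solution $u$ the bound $m(o)u_T(o)^2\leq\sum_X m\,u_0^2\euler^{\omega_0}$ (via $m(o)u_T(o)^2\leq\sum_{B_o(R)}m\,u_T^2\euler^{\omega_T}$ and the maximum principle) and then again invokes the duality Lemma~\ref{lem:eikonalheatkernelpreliminary}, now with $\nu=1/m(o)$. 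You instead observe that the eikonal inequality is preserved under the time reversal $\tilde\omega_s=-\omega_{T-s}$ — correct, because by Lemma~\ref{lem:elementary}(ii) the quantity $\drm\Gamma(\euler^{\phi/2},\euler^{-\phi/2})$ is even in $\phi$, while the sign flips in the time derivative cancel — and apply the maximum principle directly to $v_t=p_t(o,\cdot)$, which has $\ell^2$ initial datum $m(o)^{-1}\Eins_{\{o\}}$. This bypasses the cut-off/duality argument of Lemma~\ref{lem:eikonalheatkernelpreliminary} entirely for part (i) and makes transparent where the weighted $\ell^2$-norm of the kernel comes from; the price is the extra (but easily verified) symmetry observation. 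One formality worth recording: Theorem~\ref{thm:intmaxprinciple} is stated for weights on $[0,\infty)\times X$ while your $\tilde\omega$ (and indeed the paper's own $\omega$, $\tilde\omega$, $\bar\omega$ in this section) lives on $[0,T]\times X$; the finite-interval version follows from the finite-subset lemma and exhaustion exactly as in the paper, so this is harmless.
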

\begin{proof}\begin{enumerate}[(i)]
\item
Let $u\geq 0$ be a solution on the cylinder $[0,T]\times X$. Since $\omega_t\restriction_{B_o(R)}\geq 0$ for any $t\in[0,T]$ and $\omega$ satisfies the eikonal inequality, the integral maximum principle in Theorem~\ref{thm:intmaxprinciple} yields 
\begin{align*}
m(o)u_T(o)^2
\leq 
 \sum_{B_o(R)} \ m\ u_T^2 
\leq 
\sum_{B_o(R)} \ m\ u_T^2\euler^{\omega_T} 
\leq 
\sum_{X} \ m\ u_0^2\euler^{\omega_0}.
\end{align*}
Hence, Lemma~\ref{lem:eikonalheatkernelpreliminary} applied with $\nu=1/m(o)$ and $f=\euler^{\omega_0}$ yields the claim.
\item By the same reasoning, we have 
\begin{align*}
u_T(o)^2
\leq 
\sigma
\int_{0}^T \sum_{B_o(R)} \ m\ u_t^2 \ \drm t
\leq 
\sigma \int_{0}^T \sum_{B_o(R)} \ m\ u_t^2\euler^{\omega_t} \ \drm t
\leq 
\sigma T \sum_{X} \ m\ u_0^2\euler^{\omega_0}.
\end{align*}
Hence, Lemma~\ref{lem:eikonalheatkernelpreliminary} applied with $\nu=\sigma T$ and $f=\euler^{\omega_0}$ yields the claim.\qedhere
\end{enumerate}
\end{proof}

Lemma~\ref{lem:eikonalheatkernel} will be applied to different choices of solutions of the Hamilton-Jacobi equation. Specifically, 
for  $o\in X$ and $R\geq 0$ let
\[
\rho_0:=(\rho(o,\cdot)-R)_+
\]
and for $T\geq0$ introduce the following functions:
\begin{align*}
\omega&\colon [0,T]\times X\to [0,\infty),\qquad (t,x)\longmapsto\omega_t(x):=-2\eta\rho_0(x)-2(t-T)h(\eta),
\\[1ex]
\tilde\omega&\colon [0,T]\times X\to [0,\infty),\qquad (t,x)\longmapsto\tilde\omega_t(x):=2\eta\rho_0(x)-2 t h(\eta),
\\[1ex]
\bar\omega&\colon [0,T]\times X \to [0,\infty),\qquad (t,x)\longmapsto\bar\omega_t(x):=2\eta\rho(o,x)-2th(\eta).
\end{align*}
\begin{lemma}\label{lem:omegaeikonal}The functions $\omega$, $\tilde\omega$, $\bar \omega$ satisfy the eikonal inequality.
\end{lemma}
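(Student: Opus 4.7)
The three functions differ only in their spatial part; in each case the time part is $-2th(\eta)$ (up to a constant shift that drops out when differentiating), so $\tfrac{d}{dt}\omega=\tfrac{d}{dt}\tilde\omega=\tfrac{d}{dt}\bar\omega=-2h(\eta)$ and the right-hand side of the eikonal inequality equals $2h(\eta)=2S^{-2}(\cosh(\eta S)-1)$ throughout. Thus I would reduce the claim to a single uniform spatial estimate: for any $1$-Lipschitz function $\psi\colon X\to\RR$ (with respect to $\rho$) and any $\eta\geq 0$, I would show
\[
\mathrm d\Gamma\bigl(\euler^{\eta\psi},\euler^{-\eta\psi}\bigr)(x)\leq 2h(\eta),\qquad x\in X,
\]
and then apply it to $\psi=-\rho_0$, $\psi=\rho_0$, and $\psi=\rho(o,\cdot)$ respectively (each of these is $1$-Lipschitz: $\rho(o,\cdot)$ by the triangle inequality, and $\rho_0=(\rho(o,\cdot)-R)_+$ as the composition of a $1$-Lipschitz function with a $1$-Lipschitz truncation).

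To prove the uniform estimate, I would first invoke Lemma~\ref{lem:elementary}(ii) with $\omega$ replaced by $2\eta\psi$ to rewrite
\[
\bigl|\nabla_{xy}\euler^{\eta\psi}\nabla_{xy}\euler^{-\eta\psi}\bigr|=2\bigl(\cosh\bigl(\eta(\psi(x)-\psi(y))\bigr)-1\bigr).
\]
Since $\psi$ is $1$-Lipschitz and $b(x,y)>0$ forces $\rho(x,y)\leq S$, the argument $\eta(\psi(x)-\psi(y))$ lies in $[-\eta S,\eta S]$.  The elementary fact that $a\mapsto (\cosh(a)-1)/a^{2}$ is non-decreasing on $[0,\infty)$ (immediate from the Taylor series $\cosh a-1=\sum_{k\geq 1}a^{2k}/(2k)!$) then gives
\[
\cosh\bigl(\eta(\psi(x)-\psi(y))\bigr)-1\leq \frac{(\psi(x)-\psi(y))^{2}}{S^{2}}\bigl(\cosh(\eta S)-1\bigr)\leq \frac{\rho(x,y)^{2}}{S^{2}}\bigl(\cosh(\eta S)-1\bigr).
\]
Multiplying by $b(x,y)/m(x)$ and summing in $y$, the intrinsic metric condition $\sum_{y}b(x,y)\rho(x,y)^{2}\leq m(x)$ yields exactly $\mathrm d\Gamma(\euler^{\eta\psi},\euler^{-\eta\psi})(x)\leq 2S^{-2}(\cosh(\eta S)-1)=2h(\eta)$, matching $-\tfrac{d}{dt}\omega$.

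There is no real obstacle here beyond careful bookkeeping: the only nontrivial ingredient is the monotonicity of $(\cosh(a)-1)/a^{2}$, which converts the pointwise jump bound $|\psi(x)-\psi(y)|\leq\rho(x,y)\leq S$ into a quadratic control that matches the intrinsic metric inequality. Sign choices in $\omega$ and the truncation used in $\rho_0$ are irrelevant because $\cosh$ is even, which is why all three cases follow from the same Lipschitz-plus-intrinsic argument.
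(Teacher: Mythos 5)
Your proposal is correct and follows essentially the same route as the paper: compute $\mathrm d\Gamma(\euler^{\omega/2},\euler^{-\omega/2})$ via the $\cosh$ identity, use the monotonicity of $a\mapsto(\cosh a-1)/a^2$ together with the $1$-Lipschitz property of the spatial part and the jump-size bound $\rho(x,y)\leq S$, and conclude with the intrinsic metric condition. The only cosmetic difference is that you isolate the estimate as a single lemma for arbitrary $1$-Lipschitz $\psi$ and apply it three times, whereas the paper writes out the case of $\omega$ and notes the other two are identical.
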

\begin{proof}First, we consider $\omega$. 
For the first summand of the eikonal inequality, we obtain
\[
\frac{d}{dt}\omega=-2h(\eta).
\]
The second summand of the eikonal inequality is given by
\[
\drm  \Gamma(\euler^{\frac{\omega_t}2},\euler^{-\frac{\omega_t}2})(x)
=2\sum_{y\in X}\frac{b(x,y)}{m(x)}\left(\cosh\left(\frac{\omega_t(x)-\omega_t(y)}2\right)-1\right)
\\
=2\sum_{y\in X}\frac{b(x,y)}{m(x)}\left(\cosh(\eta(\rho_0(y)-\rho_0(x)))-1\right).\]
In order to bound the sum above, we look at non-trivial summand separately. Let $x,y$ with $b(x,y)>0$. Since 
\[
(0,\infty)\to(0,\infty), \quad t\mapsto
\frac{1}{t^2}(\cosh (\eta t)-1)
\]
is non-decreasing, we obtain by definition of $h(\eta)=S^{-2}\big(\cosh(\eta S)-1\big)$, the $1$-Lipschitz continuity of $\rho_0$ with respect to $\rho$, and $\rho(x,y)\leq S$ 
\[
\cosh( \eta(\rho_0(y)-\rho_0(x)))-1\leq \cosh(\eta\rho(x,y))-1
\leq\frac{\rho(x,y)^2}{S^2} \left(\cosh(\eta S )-1\right)
=\rho(x,y)^2h(\eta).
\]
Hence, since $\rho$ is intrinsic, we get
\[
\drm  \Gamma(\euler^{\frac{\omega_t}2},\euler^{-\frac{\omega_t}2})(x)
\leq 
2\sum_{y\in X}\frac{b(x,y)}{m(x)}\rho(x,y)^2h(\eta)
\leq 2h(\eta)\]
and hence
\[
\frac{d}{dt}\omega_t(x)+
\drm  \Gamma(\euler^{\frac{\omega_t}2},\euler^{-\frac{\omega_t}2})(x) \leq 0,
\]
what is the claim for $\omega$. Replacing $\omega$ by $\tilde\omega$ or $\bar\omega$, all the intermediate steps above hold as well and the claim for this function follows immediately.
\end{proof}

With the first of the three solutions of the Hamilton-Jacobi equation, we can prove the following.

\begin{lemma}\label{lem:kappabound}Let $T,R,\eta,\sigma\geq 0$, and
$o\in X$.
\begin{enumerate}[(i)]
\item We have 
\[
\sum_{X}m \ p_T(o,\cdot)^2\euler^{2\eta\rho_0-2Th(\eta)}
\leq 
\frac{1}{m(o)}
 .
\]
\item
If all non-negative subsolutions $u$ on $[0,T]\times B_o(R)$ 
\[
u_T(o)^2
\leq \sigma
\int_{0}^T \sum_{B_o(R)} \ m\ u_t^2 \ \drm t,
\]
then we have 
\[
\sum_{X}m \ p_T(o,\cdot)^2\euler^{2\eta\rho_0-2Th(\eta)}
\leq 
\sigma T
 .
\]
\end{enumerate}
\end{lemma}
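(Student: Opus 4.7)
The plan is to invoke Lemma~\ref{lem:eikonalheatkernel} with the specific choice $\omega$ introduced just before the statement, namely
\[
\omega_t(x) = -2\eta\rho_0(x) - 2(t-T)h(\eta) = -2\eta\rho_0(x) + 2(T-t)h(\eta).
\]
Two hypotheses of Lemma~\ref{lem:eikonalheatkernel} must be checked. First, the eikonal inequality
$\drm\Gamma(\euler^{\omega/2},\euler^{-\omega/2}) \leq -\tfrac{d}{dt}\omega$ is already established in Lemma~\ref{lem:omegaeikonal}. Second, one needs the pointwise nonnegativity $\omega_t|_{B_o(R)} \geq 0$ for $t \in [0,T]$, which follows immediately from the definition $\rho_0 = (\rho(o,\cdot)-R)_+$: on $B_o(R)$ we have $\rho(o,x) \leq R$ and hence $\rho_0(x) = 0$, so $\omega_t(x) = 2(T-t)h(\eta) \geq 0$ since $T-t \geq 0$ and $h(\eta) \geq 0$.

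Next, one computes the initial weight appearing in the conclusion of Lemma~\ref{lem:eikonalheatkernel}:
\[
-\omega_0(x) = 2\eta\rho_0(x) - 2T h(\eta),
\]
which matches exactly the exponent in the weighted sum on the left-hand side of both (i) and (ii). For part (i), Lemma~\ref{lem:eikonalheatkernel}(i) then yields
\[
\sum_X m\, p_T(o,\cdot)^2 \euler^{2\eta\rho_0 - 2Th(\eta)} = \sum_X m\, p_T(o,\cdot)^2 \euler^{-\omega_0} \leq \frac{1}{m(o)}.
\]
For part (ii), the $\ell^2$-mean value inequality assumed in the hypothesis is precisely the hypothesis of Lemma~\ref{lem:eikonalheatkernel}(ii), so the same lemma applied with constant $\sigma$ gives the bound $\sigma T$ in place of $1/m(o)$.

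There is essentially no obstacle: the work has already been done in Lemma~\ref{lem:omegaeikonal} (eikonal inequality for $\omega$) and in Lemma~\ref{lem:eikonalheatkernel} (the abstract weighted heat-kernel bound). The only substantive remark is the verification that the cut-off distance $\rho_0$ makes $\omega_t$ nonnegative on the ball $B_o(R)$ throughout $[0,T]$, which is the whole reason for introducing $\rho_0$ rather than $\rho(o,\cdot)$ itself. The proof is therefore a clean two-line application.
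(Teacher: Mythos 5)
Your proposal is correct and matches the paper's own proof, which likewise verifies that $\omega$ satisfies the eikonal inequality (Lemma~\ref{lem:omegaeikonal}) and is nonnegative on $B_o(R)$, then applies Lemma~\ref{lem:eikonalheatkernel}(i) and (ii) with $\euler^{-\omega_0}=\euler^{2\eta\rho_0-2Th(\eta)}$. Your explicit check that $\rho_0$ vanishes on $B_o(R)$, which forces $\omega_t\geq 0$ there, is exactly the point the paper relies on.
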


\begin{proof}
The function $\omega$ satisfies the eikonal inequality by Lemma~\ref{lem:omegaeikonal} and $\omega\restriction_{B_o(R)}\geq 0$, whence (i) follows from Lemma~\ref{lem:eikonalheatkernel}(i) applied to $\omega$. Analogously, (ii) follows from Lemma~\ref{lem:eikonalheatkernel}(ii) .
\end{proof}

\begin{lemma}\label{lem:FKintheat}
Let $a,n>0$ and $R\geq 288S$ such that for all $U\subset B_o(R)$ 
\[
\lambda(U)\geq a \ m (U)^{-{\frac{2}{n}}}.
\]
Then there is a constant $C=C_{n,S}>0$ such that for all $T,\eta>0$ 
\[
\sum_{X}m \ p_T(o,\cdot)^2\euler^{2\eta\rho_0-2Th(\eta)}
\leq C
\left[1\vee \frac{a^{\frac{n}{2}}}{m(o)}\right]^\theta
\frac{a^{-\frac{n}{2}}}{(T\wedge R^2)^{\frac{n}{2}}},
\]
where  
\[
\theta=\theta(n,R)=\frac{n+2}{n}\cdot 1\wedge \left(\frac{288S}{R}\right)^{\frac{1}{n+2}}.
\]
\end{lemma}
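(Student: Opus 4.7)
The plan is to combine the $\ell^2$-mean value inequality (Theorem~\ref{thm:mv}) with the integral maximum principle (Theorem~\ref{thm:intmaxprinciple}) via the general scheme of Lemma~\ref{lem:eikonalheatkernelpreliminary}, but applied to a \emph{time-shifted} subsolution. The shift is the key idea: it avoids splitting into cases $T\leq R^2$ and $T>R^2$, because a naive application of Lemma~\ref{lem:kappabound}(ii) only yields the bound $\sigma T$ from Theorem~\ref{thm:mv}, which is sharp for $T\leq R^2$ but suboptimal by a factor $T/R^2$ when $T>R^2$.

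Set $t^*=T\wedge R^2$. For a non-negative solution $u$ of the heat equation on $[0,T]\times X$ (with $u_0\in\ell^2(X,m)$), consider $v_s:=u_{s+T-t^*}$ for $s\in[0,t^*]$; this is a non-negative subsolution on $(0,t^*]\times B_o(R)$. Apply Theorem~\ref{thm:mv} with $T$ replaced by $t^*$; since $t^*\wedge R^2=t^*$, we get
\[
u_T(o)^2 \;=\; v_{t^*}(o)^2 \;\leq\; C_{n,S}\left[1\vee\tfrac{a^{n/2}}{m(o)}\right]^{\theta}\frac{1}{a^{n/2}(t^*)^{n/2+1}}\int_{T-t^*}^{T}\sum_{B_o(R)} m\, u_s^2\,\drm s.
\]

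Next, take the weight $\omega_t(x)=-2\eta\rho_0(x)-2(t-T)h(\eta)$ from Section~\ref{section:intmaxprinciple}, which satisfies the Hamilton-Jacobi inequality on $[0,T]\times X$ by Lemma~\ref{lem:omegaeikonal}, and satisfies $\omega_t\!\restriction_{B_o(R)}=2(T-t)h(\eta)\geq0$ for $t\in[0,T]$. Theorem~\ref{thm:intmaxprinciple} (using $\Lambda\geq0$) gives $\|u_s\euler^{\omega_s/2}\|^2\leq\|u_0\euler^{\omega_0/2}\|^2$ for every $s\in[0,T]$, so integration over $[T-t^*,T]$ yields
\[
\int_{T-t^*}^T\sum_{B_o(R)} m\,u_s^2\,\drm s \;\leq\; \int_{T-t^*}^T\sum_X m\,u_s^2\,\euler^{\omega_s}\drm s \;\leq\; t^*\sum_X m\,u_0^2\,\euler^{\omega_0}.
\]
Combining the two displays, the non-negative solution $u$ satisfies the mean value inequality $u_T(o)^2\leq \nu\sum_X m\,u_0^2\,\euler^{\omega_0}$ with $\nu=C_{n,S}[1\vee a^{n/2}/m(o)]^{\theta}a^{-n/2}(T\wedge R^2)^{-n/2}$.

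Finally, apply Lemma~\ref{lem:eikonalheatkernelpreliminary} with $f=\euler^{\omega_0}=\euler^{-2\eta\rho_0+2Th(\eta)}$, which only requires the above mean value inequality for non-negative (sub)solutions of the form $u_t=\euler^{-t\Delta}u_0$ with $u_0=p_T(o,\cdot)f^{-1}\phi\geq0$. This yields $\sum_X m\,p_T(o,\cdot)^2\euler^{2\eta\rho_0-2Th(\eta)}\leq\nu$, which is the claim. The only mildly delicate point is the time-shift step: one must verify that $v$ inherits the subsolution property on $(0,t^*]\times B_o(R)$ and that Theorem~\ref{thm:mv} can be invoked with this shorter time horizon, but this is immediate since the Faber-Krahn hypothesis is uniform in time and the map $s\mapsto u_{s+T-t^*}$ solves the same heat equation.
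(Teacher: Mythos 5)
Your proof is correct and uses the same ingredients as the paper's: Theorem~\ref{thm:mv}, the eikonal weight $\omega$, the integral maximum principle, and Lemma~\ref{lem:eikonalheatkernelpreliminary}. The only difference is organizational — the paper first reduces to the case $T\leq R^2$ via the monotonicity in $T$ of the weighted norm (itself a consequence of the integral maximum principle applied to $\tilde\omega$) and then invokes Lemma~\ref{lem:kappabound}(ii) verbatim, whereas you keep $T$ general and handle $T>R^2$ by applying the mean value inequality on the time window $[T-T\wedge R^2,\,T]$ and propagating the weighted norm from time $0$ with the maximum principle; these are two equivalent ways of exploiting the same monotonicity.
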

\begin{proof}
Since $\tilde\omega_T=2\eta\rho_0-2Th(\eta)$ satisfies the eikonal inequality by Lemma~\ref{lem:omegaeikonal}, the left-hand side is non-increasing in $T$. Thus, it suffices to consider the case $T\leq R^2$.
By Theorem~\ref{thm:mv} we obtain for all subsolutions $u\geq 0$
\[
u_T(o)^2
\leq 
\sigma \int_0^T \sum_{B_o(R)} m u_t^2\ \drm t,
\]
where $
\sigma=C_{n,S}
\left[1\vee\frac{a^{\frac{n}{2}}}{m(o)}\right]^\theta
\frac{1}{a^{\frac{n}{2}}T^{\frac{n}{2}+1}}
$, $C_{n,S}>0$.
Thus, Lemma~\ref{lem:kappabound}(ii) yields
\[
\sum_{X}m \ p_T(o,\cdot)^2\euler^{2\eta\rho_0-2Th(\eta)}
\leq 
\sigma T
=C_n
\left[1\vee \frac{a^{\frac{n}{2}}}{m(o)}\right]^\theta
\frac{1}{a^{\frac{n}{2}}T^{\frac{n}{2}}}
\]
and whence the claim.
\end{proof}

The above preparations lead to the following estimate on the weighted $\ell^2$-norms of the heat kernel.
\begin{theorem}\label{thm:integratedheat}
Let $a,n>0$ be constants, $R\geq 288S$ such that for all $U\subset B_o(R)$ 
\[
\lambda(U)\geq a \ m (U)^{-{\frac{2}{n}}}.
\]
Then there is a constant $C=C_{n,S}>0$ such that for all $T,\eta>0$, $\delta\in(0,1]$, we have
\[
E_\eta(o,T)
\leq 
C\euler^{2\eta\sqrt {\delta (T\wedge R^2)}}
\frac{\tilde \Gamma\left(\sqrt{\delta (T\wedge R^2)}\right)^2}{(a\delta (T\wedge R^2))^{\frac{n}{2}}},
\]
where 
\[
\tilde  \Gamma^{n}(r)^2=\left[1\vee\frac{a^{\frac{n}{2}}}{m(o)}\right]^{\theta(r)}.
\]
\end{theorem}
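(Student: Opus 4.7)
The plan is to apply Lemma~\ref{lem:FKintheat} with the smaller radius $R':=\sqrt{\delta(T\wedge R^2)}$ in place of $R$, and then absorb the cost of replacing the truncated distance $\rho_0':=(\rho(o,\cdot)-R')_+$ by the full distance $\rho(o,\cdot)$ into the prefactor $\euler^{2\eta R'}$.

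First, I would note that $\delta\leq 1$ gives $R'\leq R$ and $R'^2\leq T$; in particular $T\wedge R'^2=R'^2=\delta(T\wedge R^2)$, which is what will produce the desired denominator. Since the Faber-Krahn inequality is assumed on all $U\subset B_o(R)$, it automatically holds on all $U\subset B_o(R')$ with the same constants $a$ and $n$. Next, from the pointwise inequality $\rho(o,z)\leq \rho_0'(z)+R'$ one gets
\[
E_\eta(o,T)\;\leq\;\euler^{2\eta R'}\sum_{z\in X}m(z)\,p_T(o,z)^2\,\euler^{2\eta\rho_0'(z)-2Th(\eta)}.
\]
Provided $R'\geq 288S$, I would apply Lemma~\ref{lem:FKintheat} with $R$ replaced by $R'$ to bound the right-hand sum by
\[
C_{n,S}\Bigl[1\vee\tfrac{a^{n/2}}{m(o)}\Bigr]^{\theta(R')}\frac{a^{-n/2}}{(T\wedge R'^2)^{n/2}}
\;=\;C_{n,S}\,\frac{\tilde\Gamma(R')^2}{(a\delta(T\wedge R^2))^{n/2}},
\]
and multiplying by $\euler^{2\eta R'}=\euler^{2\eta\sqrt{\delta(T\wedge R^2)}}$ yields exactly the claimed inequality in this regime.

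The main obstacle is the complementary regime $R'<288S$, where Lemma~\ref{lem:FKintheat} is not directly available because Theorem~\ref{thm:mv} cannot be invoked on such a small ball. Here I would fall back on the unconditional estimate in Lemma~\ref{lem:kappabound}(i), which gives
\[
\sum_{z\in X}m(z)\,p_T(o,z)^2\,\euler^{2\eta\rho_0'(z)-2Th(\eta)}\;\leq\;\frac{1}{m(o)}
\]
since $\omega_t=-2\eta\rho_0'-2(t-T)h(\eta)$ satisfies the eikonal inequality by Lemma~\ref{lem:omegaeikonal} and is non-negative on $B_o(R')$ for $t\in[0,T]$. It then remains to verify that $1/m(o)$ is dominated by $C\,\tilde\Gamma(R')^2/(aR'^2)^{n/2}$ with a constant depending only on $n$ and $S$. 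In this regime $\theta(R')=(n+2)/n$, and I would split into the two subcases $a^{n/2}\leq m(o)$ and $a^{n/2}>m(o)$: in the first the inequality reduces to $a^{n/2}R'^{n}/m(o)\leq C$, which follows from $a^{n/2}/m(o)\leq 1$ and $R'\leq 288S$; in the second it reduces to $R'^n m(o)^{2/n}\leq Ca$, which follows from $m(o)^{2/n}<a$ and $R'\leq 288S$. In both subcases a constant of the form $(288S)^n$ suffices.

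Combining both regimes and taking $C=C_{n,S}$ as the maximum of the constant from Lemma~\ref{lem:FKintheat} and $(288S)^n$ yields the claim. The only subtle step is the small-radius case, and the crucial observation is that the exponent $\theta$ saturates at $(n+2)/n$ on scales $R'\leq 288S$, which is what makes the trivial bound $1/m(o)$ comparable to the stated right-hand side up to constants depending only on $n$ and $S$.
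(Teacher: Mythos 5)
Your proposal is correct and follows essentially the same route as the paper: apply Lemma~\ref{lem:FKintheat} at the reduced radius $R'=\sqrt{\delta(T\wedge R^2)}$ when $R'\geq 288S$, fall back on the unconditional bound $1/m(o)$ from Lemma~\ref{lem:kappabound}(i) when $R'<288S$ (using that $\theta$ saturates at $(n+2)/n\geq 1$ there), and in both regimes pay $\euler^{2\eta R'}$ for replacing the truncated distance by $\rho(o,\cdot)$. Your two-subcase verification in the small-radius regime is just an unpacking of the paper's one-line estimate $a^{n/2}/m(o)\leq[1\vee a^{n/2}/m(o)]^{\theta(R')}$, so no substantive difference.
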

\begin{proof}
Since $\bar \omega_T=2\eta\rho(o,\cdot)-2Th(\eta)$ satisfies the eikonal inequality by Lemma~\ref{lem:omegaeikonal}, the left-hand side is non-increasing in $T$. Hence, we can restrict to the case $T\leq R^2$. Our assumption implies the Faber-Krahn inequality with the same constants $a$ and $n$ for all  $r\in[0,R]$, i.e., for all $U\subset B(r)$,
\[
\lambda(U)\geq a \ m(U)^{-{\frac{2}{n}}}.
\]
Let $\delta\in(0,1]$ and $R'=\sqrt{\delta T}$. Clearly, $R'\leq \sqrt T\leq R$. For $R'\geq 288S$, apply
Lemma~\ref{lem:FKintheat} to $R'$ to get 
\[
\sum_{X}m \ p_T(o,\cdot)^2\euler^{2\eta(\rho(o,\cdot)-R')_+-2Th(\eta)}
\leq C_{n,S}'
\left[1\vee\frac{a^{\frac{n}{2}}}{m(o)}\right]^{\theta(R')}
\frac{1}{(a(T\wedge (R')^2))^{\frac{n}{2}}}
=C_{n,S}'
\frac{\tilde \Gamma(\sqrt{\delta T})^2}{(a\delta T)^{\frac{n}{2}}}
\]
with $C_{n,S}'=(1\vee S)^n2^{2n^2+27n+51}$. 
This implies the claim in the first case.
\\
If $R'=\sqrt{\delta T}<288S$, since $\theta(R')=(n+2)/n\geq1$, we get by Lemma~\ref{lem:kappabound}(i) 
\[
\sum_{X}m \ p_T(o,\cdot)^2\euler^{2\eta(\rho(o,\cdot)-R')_+-2Th(\eta)}
\leq 
\frac{1}{m(o)}
=(\delta T)^{\frac{n}{2}}\frac{a^{\frac{n}{2}}}{m(o)}
\frac{1}{(a\delta T)^{\frac{n}{2}}}
\leq (288S)^\frac{n}{2} 
\frac{\tilde \Gamma(\sqrt{\delta T})^2}{(a\delta T)^{\frac{n}{2}}}.
\]
In order to bound the exponential inside the sum on the left-hand side, use
\[2\eta( \rho(o,\cdot)-R')_+-2Th(\eta)\geq 2\eta\rho(o,\cdot)-2Th(\eta)-2\eta R'.
\]
The claim follows by setting $C_{n,S}=(288S)^\frac{n}{2}\vee C_{n,S}'$.
\end{proof}

\section{Relative Faber-Krahn and heat kernel  bounds}\label{section:FKhkb}

In this section, we use bounds on the weighted $\ell^2$-norms of the heat kernel from Section~\ref{section:weightedl2norm} to obtain Gaussian upper heat kernel estimates with the optimal Gaussian for graphs with an intrinsic metric. 
\\
For $x\in X$, $\eta, t>0$, recall the weighted $\ell^2$-norm of the heat kernel
\[
E_\eta(x,t)=\sum_{z\in X}m(z)
p_t(x,z)^2
\euler^{
2\eta\rho(x,z)-2th(\eta)},
\]
where 
\[
h(\eta)=S^{-2}\big(\cosh(\eta S)-1\big).
\]
The use of variants of  $E_\eta$ to bound heat kernels has been used in the past, e.g., in \cite{ChengLiYau-81}, and this technique has been refined by Grigor'yan \cite{Grigoryan-94}.
In contrast to earlier works, we combine it with Davies' trick to obtain new a priori heat kernel estimates involving the sharp Gaussian in terms of $E_\eta$ and appropriate $\eta>0$.

\begin{theorem}\label{thm:split}For all $x,y\in X$, $t, t_0>0$, we have 
\begin{align*}
p_{t}(x,y)\leq
\sqrt{ E_{\eta_0}(x,(t\wedge t_0)/2)
E_{\eta_0}(y,(t\wedge t_0)/2)}
 \euler^{-\Lambda(t-t\wedge t_0)
-\zeta(\rho(x,y),t)},
\end{align*}
where $\eta_0=\frac{1}{S}\arsinh\left(\frac{\rho(x,y) S}{{t}}\right)$.
\end{theorem}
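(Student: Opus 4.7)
The plan is to combine a Davies-type Cauchy--Schwarz decomposition of the heat kernel, which produces the Gaussian factor $\euler^{-\zeta(\rho(x,y),t)}$, with the integrated maximum principle of Theorem~\ref{thm:intmaxprinciple}, which produces the spectral factor $\euler^{-\Lambda(t-t\wedge t_0)}$ and allows us to pass from time $t/2$ to time $(t\wedge t_0)/2$ inside the weighted norms $E_{\eta_0}$.

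\emph{Step 1: pointwise Cauchy--Schwarz with a one-sided exponential weight.} Using the semigroup property and the symmetry of $p$, I write
\[
p_t(x,y)=\sum_z m(z)\,p_{t/2}(x,z)\,p_{t/2}(y,z),
\]
insert the trivial factor $\euler^{\eta\rho(x,z)}\cdot\euler^{-\eta\rho(x,z)}=1$, and apply Cauchy--Schwarz in $\ell^2(X,m)$. The first resulting sum equals $E_\eta(x,t/2)\,\euler^{th(\eta)}$ by the very definition of $E_\eta$; for the second I use the triangle inequality $\rho(x,z)\geq\rho(x,y)-\rho(y,z)$ to replace $\euler^{-2\eta\rho(x,z)}$ by $\euler^{-2\eta\rho(x,y)}\euler^{2\eta\rho(y,z)}$ and recognise again the definition of $E_\eta(y,t/2)\,\euler^{th(\eta)}$. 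Combining the two square roots yields, for every $\eta>0$,
\[
p_t(x,y)\leq \euler^{-\eta\rho(x,y)+th(\eta)}\sqrt{E_\eta(x,t/2)\,E_\eta(y,t/2)}.
\]
Now I optimise $\eta$: with $\eta_0=S^{-1}\arsinh(\rho(x,y)S/t)$ and $\cosh(\arsinh u)=\sqrt{1+u^2}$, a direct computation gives $th(\eta_0)=S^{-2}\bigl(\sqrt{t^2+\rho(x,y)^2S^2}-t\bigr)$, so that $\eta_0\rho(x,y)-th(\eta_0)=\zeta(\rho(x,y),t)$ by the definition of $\zeta$. In the regime $t\leq t_0$ we have $(t\wedge t_0)/2=t/2$ and $t-t\wedge t_0=0$, and the theorem follows at once from this inequality.

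\emph{Step 2: shrinking the time in $E_{\eta_0}$ via the integrated maximum principle.} For $t>t_0$ I apply Theorem~\ref{thm:intmaxprinciple} to the heat solution $u_s=p_{s+t_0/2}(y,\cdot)\in\ell^2(X,m)$ with the time-dependent weight $\omega_s(z)=2\eta_0\rho(y,z)-2(s+t_0/2)h(\eta_0)$. This $\omega$ coincides (up to a harmless shift in time) with the function $\bar\omega$ of Lemma~\ref{lem:omegaeikonal}, and so satisfies the eikonal inequality $\mathrm d\Gamma(\euler^{\omega/2},\euler^{-\omega/2})\leq -\frac{d}{ds}\omega$. Since $\|u_s\euler^{\omega_s/2}\|^2=E_{\eta_0}(y,s+t_0/2)$, evaluating the monotone function $s\mapsto\euler^{2\Lambda s}E_{\eta_0}(y,s+t_0/2)$ at $s=0$ and at $s=(t-t_0)/2$ yields
\[
E_{\eta_0}(y,t/2)\leq \euler^{-\Lambda(t-t_0)}\,E_{\eta_0}(y,t_0/2),
\]
and symmetrically for $x$. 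Plugging these two inequalities into the bound from Step~1 and using $(t\wedge t_0)/2=t_0/2$ gives the claim in the remaining case $t>t_0$.

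\emph{Main obstacle.} The essential subtlety is the choice of weight in the Cauchy--Schwarz step: one needs the same weight $\eta_0$ to appear symmetrically in $E_{\eta_0}(x,\cdot)$ and $E_{\eta_0}(y,\cdot)$, which is why the asymmetric weight $\euler^{\pm\eta\rho(x,z)}$ is combined with the triangle inequality to generate the factor $\euler^{\eta\rho(y,z)}$ on the $y$-side; the price is exactly the scalar factor $\euler^{-\eta\rho(x,y)}$ that gets optimised to the sharp exponent $\zeta$ of Davies--Pang. Once this symmetrisation is in place, the integrated maximum principle plays its standard role of pushing the time parameter in the weighted $\ell^2$-norms down from $t/2$ to $t_0/2$ at the price of a $\euler^{-\Lambda(t-t_0)}$ factor, and nothing else is needed.
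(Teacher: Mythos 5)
Your proposal is correct and follows essentially the same route as the paper: semigroup identity plus a weighted Cauchy--Schwarz combined with the triangle inequality to extract the factor $\euler^{-\eta\rho(x,y)+th(\eta)}$, optimization at $\eta_0=S^{-1}\arsinh(\rho(x,y)S/t)$ to produce $\euler^{-\zeta}$, and the integrated maximum principle applied to $\bar\omega$ to push the time in $E_{\eta_0}$ down to $(t\wedge t_0)/2$ at the cost of $\euler^{-\Lambda(t-t\wedge t_0)}$. The only (cosmetic) difference is that you insert the weight $\euler^{\pm\eta\rho(x,\cdot)}$ asymmetrically and symmetrize via the triangle inequality after Cauchy--Schwarz, whereas the paper inserts both weights up front and bounds the compensating factor before applying Cauchy--Schwarz.
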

\begin{proof}For any $\eta>0$, the semigroup property yields
\begin{multline*}
p_{t}(x,y)
=\sum_{z\in X}m(z)
p_{t/2}(x,z)
\euler^{
\eta\rho(x,z)-({t/2})h(\eta)}
p_{t/2}(y,z)
\euler^{\eta\rho(y,z)-({t/2})h(\eta)
}
 \euler^{
-\eta(\rho(x,z)+\rho(y,z))+th(\eta)}
\\
\leq  \euler^{
-\eta\rho(x,y)+th(\eta)}
\sum_{z\in X}m(z)
p_{t/2}(x,z)
\euler^{
\eta\rho(x,z)-({t/2})h(\eta)}
p_{t/2}(y,z)
\euler^{\eta\rho(y,z)-({t/2})h(\eta)
},
\end{multline*}
where we used the triangle inequality to obtain $
-\eta(\rho(x,z)+\rho(y,z))+th(\eta)\leq 
-\eta\rho(x,y)+th(\eta)$.
Further,
the H\"older inequality yields for the sum
\begin{multline*}
\sum_{z\in X}m(z)
p_{t/2}(x,z)
\euler^{
\eta\rho(x,z)-({t/2})h(\eta)}
p_{t/2}(y,z)
\euler^{\eta\rho(y,z)-({t/2})h(\eta)
}
\\
\leq \left(\sum_{z\in X}m(z)
p_{t/2}(x,z)^2
\euler^{
2\eta\rho(x,z)-th(\eta)}\right)^{\frac{1}{2}}
\left(\sum_{z\in X}m(z)p_{t/2}(y,z)^2
\euler^{2\eta\rho(y,z)-th(\eta)}\right)^{\frac{1}{2}}
=\sqrt{E_\eta(x,{t/2})E_\eta(y,{t/2})}.
\end{multline*}
Since $\bar\omega_t=2\eta\rho(z,\cdot)-2th(\eta)$ satisfies the eikonal inequality by Lemma~\ref{lem:omegaeikonal} for $z\in\{x,y\}$, it follows from the integral maximum principle that for $z\in\{x,y\}$ and all $\eta>0$, and $t,t_0\geq0$
\[
E_\eta(z,{t/2})\leq E_\eta(z,(t\wedge t_0)/2)\euler^{-\Lambda(t-t\wedge t_0)},
\]
where we used $t\geq t\wedge t_0$.
Next, we choose $$\eta=\eta_0=\frac{1}{S}\arsinh\left(\frac{\rho(x,y) S}{{t}}\right),$$ 
which is the minimizer of the function $\eta\mapsto -\eta\rho(x,y)+th(\eta)$.
Since $h(\eta_0)=\tfrac{1}{S^2}\big(\cosh(\eta_0 S)-1\big)$ and $\cosh(\arsinh(x))=\sqrt{x^2+1}$,  we get
\[
{t} h(\eta_0)
=\frac{{t}}{S^2}
\left[
\cosh
\left(
\arsinh
\left(
\frac{\rho(x,y) S}{{t}}
\right)
\right)-1
\right]
\\
=\frac{1}{S^2}\left(\sqrt{\rho(x,y)^2S^2+t^2} -{t}
\right)
\]
and hence 
\[
-\eta_0 \rho(x,y)+{t} h(\eta_0)
=-\frac{\rho(x,y)}{S}\arsinh\left(\frac{\rho(x,y) S}{{t}}\right)+\frac{1}{S^2}\left(\sqrt{\rho(x,y)^2S^2+t^2} -{t}
\right)
=
-\zeta\big(\rho(x,y),{t}\big).
\]
Plugging in this estimate and using the monotonicity of $E$ described above yields the claim.
\end{proof}

Next, we turn to the deriviation of Gaussian upper bounds for the heat kernel given Faber-Krahn inequalities in balls. 
In order to formulate the next theorem, recall
\[
\theta(n,r)=\frac{n+2}{n}\cdot 1\wedge \left(\frac{288S}{r}\right)^{\frac{1}{n+2}}.
\]

\begin{theorem}\label{thm:upperboundgeneral}
Let $x,y\in X$, $a_x,a_y$, $R_2\geq 2\cdot 288S$,  such that for $z\in\{x,y\}$ and all $U\subset B_z(\sqrt t\wedge R_2)$ 
\[
\lambda(U)\geq a_z m (U)^{-{\frac{2}{n_z}}}.
\] 
Then we have for all $t\geq 2\cdot (288S)^2$
\[
p_{{t}}(x,y)
\leq
\Gamma^n_{x}(\sqrt{\tau_\rho})\Gamma^n_{y}(\sqrt{\tau_\rho})
\left[\frac{1\vee \frac{{t}}{S^2}\arsinh^2\left(\frac{\rho(x,y) S}{{t}}\right)}{\sqrt{a_xa_y}({t}\wedge R_2^2)}\right]^{\frac{n_{xy}}{2}}
\euler^{-\Lambda({t}-{t}\wedge R_2^2)-\zeta(\rho(x,y),{t})},
\]
where 
\[
n_{xy}:= \tfrac{n_x+n_y}2,\qquad 
\Gamma^{n}_z(\sqrt{\tau_\rho})=
C_{n_z,S}
\left[1\vee\frac{a_z^{\frac{n_z}{2}}}{m(z)}\right]^\frac{\theta\left(n_z,\sqrt{\tau_\rho}\right)}{2},
\qquad
\tau_\rho=\frac{\tau}{2}\wedge \frac{S^2}{2\arsinh^2(\rho(x,y) S/\tau)},
\]
and $C_{n_z,S}>0$, $z\in\{x,y\}$.
\end{theorem}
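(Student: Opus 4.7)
The plan is to combine Theorem~\ref{thm:split} with Theorem~\ref{thm:integratedheat} by a carefully tuned choice of the free parameter in the latter, so that the sharp Gaussian factor survives the estimation of the weighted $\ell^2$-norms.

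First, I apply Theorem~\ref{thm:split} with $t_0 := R_2^2$. Writing $\tau := t\wedge R_2^2$ and $\eta_0 := \tfrac{1}{S}\arsinh(\rho(x,y)S/t)$, this gives
\[
p_t(x,y)\leq \sqrt{E_{\eta_0}(x,\tau/2)\, E_{\eta_0}(y,\tau/2)}\ \euler^{-\Lambda(t-\tau)-\zeta(\rho(x,y),t)},
\]
so the Gaussian and spectral factors are already in place. It remains to bound each of the two weighted $\ell^2$-norms.

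Next, I apply Theorem~\ref{thm:integratedheat} to $E_{\eta_0}(z,\tau/2)$ for $z\in\{x,y\}$, with ball radius $R:=\sqrt t\wedge R_2$ and time $T:=\tau/2$. The hypotheses $t\geq 2(288S)^2$ and $R_2\geq 2\cdot 288S$ ensure that $R\geq 288S$ in both cases $\sqrt t\lessgtr R_2$, so the Faber-Krahn assumption in $B_z(\sqrt t\wedge R_2)$ activates the theorem, and one checks $T\wedge R^2=\tau/2$. The key step is the selection
\[
\delta:=\frac{2\tau_\rho}{\tau}\in(0,1],
\]
which is admissible since $\tau_\rho\leq \tau/2$ by definition, and for which $\delta(T\wedge R^2)=\tau_\rho$. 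This yields
\[
E_{\eta_0}(z,\tau/2)\leq C_{n_z,S}\,\euler^{2\eta_0\sqrt{\tau_\rho}}\,\frac{\bigl[1\vee a_z^{n_z/2}/m(z)\bigr]^{\theta(n_z,\sqrt{\tau_\rho})}}{(a_z\tau_\rho)^{n_z/2}}.
\]
The decisive observation is that $\euler^{2\eta_0\sqrt{\tau_\rho}}$ is bounded by an absolute constant: since $\tau\leq t$, we have $\arsinh(\rho S/t)\leq \arsinh(\rho S/\tau)$, while by definition $\tau_\rho\leq S^2/(2\arsinh^2(\rho S/\tau))$; multiplying these bounds gives $\eta_0\sqrt{\tau_\rho}\leq 1/\sqrt 2$, hence $\euler^{2\eta_0\sqrt{\tau_\rho}}\leq \euler^{\sqrt 2}$. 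This cancellation is possible precisely because $\tau_\rho$ is the minimum of $\tau/2$ and the inverse-$\arsinh$ term, and it is the core novelty compared to the manifold argument: a naive choice such as $\delta=1$ would leave an exponential of order $\euler^{2\eta_0\sqrt{\tau/2}}$ that would compete with and destroy the sharp Davies--Pang Gaussian.

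To conclude, I multiply the bounds for $z=x$ and $z=y$, take square roots, and absorb $\euler^{\sqrt 2}$ into the constants $C_{n_z,S}$; this produces the prefactor $\Gamma^n_x(\sqrt{\tau_\rho})\Gamma^n_y(\sqrt{\tau_\rho})$ together with the algebraic factor $(a_x\tau_\rho)^{-n_x/4}(a_y\tau_\rho)^{-n_y/4}$. Using the identity $\tau_\rho^{-1}=(2/\tau)\bigl(1\vee(\tau/S^2)\arsinh^2(\rho S/\tau)\bigr)$ together with $\tau\leq t$, the powers of $\tau_\rho$ rearrange into $(t\wedge R_2^2)^{-n_{xy}/2}[1\vee (t/S^2)\arsinh^2(\rho(x,y)S/t)]^{n_{xy}/2}$ up to tame constants absorbed into $C_{n_z,S}$. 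I expect the principal technical obstacle to be exactly this last bookkeeping step: matching $(a_x\tau_\rho)^{-n_x/4}(a_y\tau_\rho)^{-n_y/4}$ with the symmetric expression $(\sqrt{a_xa_y}(t\wedge R_2^2))^{-n_{xy}/2}$ and converting the $\arsinh$ argument from $\tau$ to $t$, both of which are handled by carefully tracking the minimum structure of $\tau_\rho$ and adjusting the multiplicative constants in $C_{n_z,S}$.
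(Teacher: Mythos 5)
Your overall architecture is exactly the paper's: split via Theorem~\ref{thm:split} with $t_0=R_2^2$, then bound each $E_{\eta_0}(z,\tau/2)$ by Theorem~\ref{thm:integratedheat} with a tuned $\delta$ so that $\euler^{2\eta_0\sqrt{\delta(T\wedge R^2)}}$ stays bounded. However, your specific choice $\delta=2\tau_\rho/\tau$, i.e.\ $\delta=1\wedge S^2/(\tau\arsinh^2(\rho S/\tau))$, creates a genuine gap in the final bookkeeping step. With that choice the denominator becomes $(a_z\tau_\rho)^{n_z/2}$ and the identity $\tau_\rho^{-1}=(2/\tau)\bigl(1\vee(\tau/S^2)\arsinh^2(\rho S/\tau)\bigr)$ produces the polynomial factor with argument $\rho S/\tau$, not $\rho S/t$. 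The function $s\mapsto s\arsinh^2(\rho S/s)$ is not monotone, and the needed comparison
\[
1\vee\tfrac{\tau}{S^2}\arsinh^2\bigl(\tfrac{\rho S}{\tau}\bigr)\ \leq\ C\Bigl(1\vee\tfrac{t}{S^2}\arsinh^2\bigl(\tfrac{\rho S}{t}\bigr)\Bigr)
\]
fails with any constant $C=C_{n,S}$: take $t\gg R_2^2$ (so $\tau=R_2^2$) and $\rho\gg R_2^2/S$ with $t\geq\rho^2S^2$; then the right-hand side equals $1$ while the left-hand side is of order $(R_2^2/S^2)\ln^2(\rho S/R_2^2)$, which is unbounded. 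So the excess cannot be absorbed into $C_{n_z,S}$ as you claim, and your intermediate bound is strictly weaker than the asserted one in this regime.

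The paper avoids this by choosing $\delta=1\wedge(\tau\eta_0^2)^{-1}=1\wedge S^2/(\tau\arsinh^2(\rho S/t))$, i.e.\ the $\arsinh$ is evaluated at $t$ (matching $\eta_0$), not at $\tau$. Then $1/(\delta\tau)=\tau^{-1}\bigl(1\vee(\tau/S^2)\arsinh^2(\rho S/t)\bigr)$, and passing to the stated factor only requires the trivial monotone step $\tau\leq t$ inside the outer prefactor. The price is that $\delta\tau/2$ is no longer equal to $\tau_\rho$ but only satisfies $\delta\tau/2\geq\tau_\rho$ (since $\arsinh(\rho S/t)\leq\arsinh(\rho S/\tau)$); this is enough because $\theta(n,\cdot)$ is decreasing, so $\tilde\Gamma^{n_z}(\sqrt{\delta\tau/2})^2\leq\tilde\Gamma^{n_z}(\sqrt{\tau_\rho})^2$, and the exponential is still controlled by $\eta_0^2\delta\tau\leq1$. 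Your observation about why $\delta=1$ would destroy the sharp Gaussian is correct, and the rest of your argument (the application of Theorems~\ref{thm:split} and~\ref{thm:integratedheat}, the verification $R=\sqrt t\wedge R_2\geq 288S$, and the bound $\eta_0\sqrt{\tau_\rho}\leq 1/\sqrt2$) is sound; only the placement of $t$ versus $\tau$ inside the $\arsinh$ in the choice of $\delta$ needs to be corrected.
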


\begin{proof} For simplicity abbreviate $\tau:=t\wedge R^2_2$ and $\rho:=\rho(x,y)$. 
Theorem~\ref{thm:split} yields with $\tau=t\wedge R_2^2$ and $t_0=R_2^2$
\[
p_{{t}}(x,y)
\leq \sqrt{E_{\eta_0}(x,\tau/2)E_{\eta_0}(y,\tau/2)}
\ \euler^{-\Lambda({t}-\tau)
-\zeta(\rho,t)},
\]
where $\eta_0={S}^{-1}\arsinh\left({\rho S}/{{t}}\right)$.
Next, we bound $E_{\eta_0}(z,\tau/2)$ for $z\in\{x,y\}$.
We have $\sqrt\tau=\sqrt t\wedge R_2\geq 288S$. Hence,
Theorem~\ref{thm:integratedheat} applied to $a=a_z$, $n=n_z$, $R=\sqrt \tau$, $T=\tau/2$, yields for all $z\in\{x,y\}$, $\delta\in(0,1]$, and $\eta>0$
the estimate
\[
E_{\eta_0}(z,\tau/2)
=E_{\eta_0}(z,(t\wedge R^2_2)/2)
\leq
C_{n_z,S}\euler^{2{\eta_0}\sqrt {\delta (T\wedge R^2)}}
\frac{\tilde \Gamma^{n_z}(\sqrt{\delta (T\wedge R^2)})^2}{(a_z\delta (T\wedge R^2))^{\frac{n_z}{2}}}
=C_{n_z,S}'\euler^{2{\eta_0}\sqrt {\delta \tau/2}}
\frac{\tilde \Gamma^{n_z}(\sqrt{\delta \tau/2})^2}{(a_z\delta \tau)^{\frac{n_z}{2}}}
\]
where we we set $C_{n_z,S}'=2^{n_z/2}C_{n_z,S}$ with $C_{n_z,S}$ given in Theorem~\ref{thm:integratedheat}.
Choose 
\[
\delta=1\wedge \frac{1}{\tau\eta_0^2}=1\wedge \frac{S^2}{\tau\arsinh^2(\rho S/t)}.
\]
The estimate $\arsinh(\rho S/t)\leq \arsinh(\rho S/\tau)$ yields
\[
\sqrt{\delta \tau/2}=\sqrt{\frac{\tau}2\wedge \frac{S^2}{2(\arsinh^2(\rho S/t))} }
\geq 
\sqrt{\frac{\tau}2\wedge \frac{S^2}{2(\arsinh^2(\rho S/\tau))} }
= \sqrt{\tau_\rho}.
\] 
Since $\euler^{2\eta_0\sqrt {\delta \tau/2}}\leq \euler^2\le 2^4$ and $\theta(n,r)$ is monotone decreasing in $r$, we get $\tilde \Gamma^{n_z}(\sqrt{\delta \tau/2})^2\leq \tilde \Gamma^{n_z}(\sqrt{\tau_\rho})^2$ and thus
\[
E_{\eta_0}(z,\tau/2)
\leq 2^4C_{n_z,S}'
\tilde \Gamma^{n}_z(\sqrt{\tau_\rho})^2\frac{\left[1\vee \frac{\tau\arsinh^2(\rho S/t)}{S^2}\right]^{\frac{n_z}{2}}}{
(a_z\tau)^{\frac{n_z}{2}}}
=\Gamma^{n_z}(\sqrt{\tau_\rho})^2\frac{\left[1\vee \frac{\tau\arsinh^2(\rho S/t)}{S^2}\right]^{\frac{n_z}{2}}}{
(a_z\tau)^{\frac{n_z}{2}}},
\]
where we put in the definition of $\Gamma$ the constant $C_{n_z,S}:=
\sqrt{2^4C_{n_z,S}'}$.
Plugging this estimate for $z\in \{x,y\}$ into the above bound on $p_t(x,y)$ and 
using $\tau/S^2\leq t/S^2$ in the parentheses yields the claim.
\end{proof}

\begin{remark}In contrast to the upper heat kernel bound in Theorem~\ref{thm:upperboundgeneral}, the polynomial correction term in \cite{KellerRose-22a} is given by 
\[
1\vee S^{-2}\left(\sqrt{t^2+\rho^2S^2}-t\right).
\]
A simple application of l'Hospital's rule shows that the function 
\[
x\longmapsto \frac{\arsinh^2\left(x\right)}{\sqrt{1+x^2}-1}
\]
is bounded by $2$. Choosing $x=\rho S/t$ shows that the our polynomial correction term is bounded above by the one obtained in \cite{KellerRose-22a}.
\end{remark}

Finally, we show that the relative Faber-Krahn inequality (FK) yields the Gaussian upper bounds (G) with correction function $\Psi$, which we recall to be given by
\[
\Psi_{xy}(\sqrt \tau)^2=\Phi_y^{n(\sqrt\tau)}(\sqrt{\tau_\rho})\Phi_y^{n(\sqrt\tau)}(\sqrt{\tau_\rho}),
\qquad 
\Phi_z^{n}(t)=\nu_{z,n}^{\theta\left(n,t\right)}, \qquad \nu_{z,n}=1\vee \Deg_z^{\frac{n}{2}},
\]
where $\tau=t\wedge R_2^2$, $\tau_\rho=({\tau}/{2})\wedge {S^2}/({2\arsinh^2(\rho(x,y) S/\tau)})^{-1}$.

\begin{theorem}\label{thm:gaussianboundclean}
Let $x,y\in X$ and $R_2\geq 2R_1\geq 2\cdot 288S$. If there is a function $a\colon \{x,y\}\to(0,1]$ such that 
$FK(R_1,R_2,a,n)$ holds in $\{x,y\}$, then there exists a function $C=C_{a_x,a_y,n_x,n_y,S}\geq 1$
such that for all $t\geq R_1^2$ we have
\[
p_t(x,y)\leq C\Psi_{xy}(\sqrt t\wedge R_2)
\frac{\left(1\vee \frac{t}{S^2}\arsinh^2\left(\frac{\rho(x,y) S}{t}\right)\right)^{\frac{n_{xy}(\sqrt t\wedge R_2)}{2}}}{\sqrt{m(B_x(\sqrt t\wedge R_2))m(B_y(\sqrt t\wedge R_2))}}
\euler^{-\Lambda(t-t\wedge R_2^2)-\zeta(\rho(x,y),t)}
\]
where $n_{xy}(\sqrt t\wedge R_2):= \tfrac{n_x(\sqrt t\wedge R_2)+n_y(\sqrt t\wedge R_2)}2$.
\end{theorem}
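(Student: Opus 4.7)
The plan is to combine Theorem~\ref{thm:upperboundgeneral} with the local regularity from Lemma~\ref{lem:localreg}, both direct consequences of the Faber-Krahn hypothesis. Setting $R := \sqrt t \wedge R_2 \in [R_1, R_2]$, which is valid since $t \geq R_1^2$, and abbreviating $a_z = a_z(R)$, $n_z = n_z(R)$, the relative FK hypothesis rearranges into the absolute form required by Theorem~\ref{thm:upperboundgeneral}:
\begin{align*}
\lambda(U) \geq A_z\, m(U)^{-2/n_z},\qquad A_z := \frac{a_z\, m(B_z(R))^{2/n_z}}{R^2},
\end{align*}
valid for any $U \subset B_z(R)$, $z \in \{x,y\}$.

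Feeding $A_z$ and $n_z$ into Theorem~\ref{thm:upperboundgeneral} and exploiting the algebraic identity $\sqrt{A_x A_y}\,R^2 = \sqrt{a_x a_y}\,m(B_x(R))^{1/n_x}m(B_y(R))^{1/n_y}$ directly produces the desired volume factors in the denominator of the polynomial term; in the symmetric case $n_x = n_y$ this is exactly $(a_x a_y)^{1/2}\sqrt{m(B_x(R))m(B_y(R))}$, matching the target up to the constant $(a_x a_y)^{-n_{xy}/4}$ absorbed into $C$. For the $\Gamma$-correction, which takes the form $\Gamma^{n_z}_z = C_{n_z,S}[1 \vee A_z^{n_z/2}/m(z)]^{\theta_z/2}$ with $\theta_z := \theta(n_z, \sqrt{\tau_\rho})$, the key computation is that $A_z^{n_z/2}/m(z) = a_z^{n_z/2}\,m(B_z(R))/(R^{n_z}\,m(z))$, and the local regularity bound $m(B_z(R))/m(z) \leq (4/a_z)^{n_z/2}(1 \vee \Deg_z)^{n_z/2} R^{n_z}$ from Lemma~\ref{lem:localreg} (itself derived from the FK hypothesis by testing on $U = \{z\}$) collapses this to the striking $R$-independent estimate $A_z^{n_z/2}/m(z) \leq 4^{n_z/2}(1 \vee \Deg_z)^{n_z/2}$. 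Raising to the power $\theta_z/2$ reproduces precisely the factor $(1 \vee \Deg_z)^{n_z\theta_z/4}$, so the product $\Gamma^{n_x}_x \Gamma^{n_y}_y$ matches $\Psi_{xy}(\sqrt t \wedge R_2) = [\Phi_x^{n_x}(\sqrt{\tau_\rho})\Phi_y^{n_y}(\sqrt{\tau_\rho})]^{1/2}$ up to a constant depending only on $n_x, n_y, S$.

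The main technical obstacle I expect is the truly asymmetric case $n_x \neq n_y$. The polynomial factor then carries $m(B_x(R))^{n_{xy}/(2n_x)} m(B_y(R))^{n_{xy}/(2n_y)}$ in the denominator rather than the symmetric $\sqrt{m(B_x(R))m(B_y(R))}$, leaving an imbalance of the form $[m(B_x(R))^{1/n_x}/m(B_y(R))^{1/n_y}]^{(n_x - n_y)/4}$ to be controlled. I plan to tame this by a second application of local regularity to dominate $m(B_z(R))^{1/n_z}$ by $R\,(1 \vee \Deg_z)^{1/2}\,m(z)^{1/n_z}$ up to constants, and then verify that all surviving powers of $R$ and $\Deg_z$ are either compatible with the $\Psi$-correction or absorbable into the constant $C = C_{a_x,a_y,n_x,n_y,S}$. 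The exponential factor $e^{-\Lambda(t - t \wedge R_2^2) - \zeta(\rho(x,y),t)}$ is carried through unchanged from Theorem~\ref{thm:upperboundgeneral}, completing the proof.
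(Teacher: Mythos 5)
Your proposal follows essentially the same route as the paper: rescale the relative Faber--Krahn inequality into the absolute form $\lambda(U)\geq a_z'\,m(U)^{-2/n_z}$ with $a_z'=a_z m(B_z(\sqrt\tau))^{2/n_z}/\tau$, feed it into Theorem~\ref{thm:upperboundgeneral}, and then use Lemma~\ref{lem:localreg} to collapse the $\Gamma$-factors into the degree-dependent correction $\Psi$, absorbing $(a_xa_y)^{-n_{xy}/4}$ into $C$. Your observation that the volume factors only recombine into $\sqrt{m(B_x)m(B_y)}$ when $n_x=n_y$ is a real subtlety that the paper's own computation passes over silently, and your plan to control the resulting imbalance via a second application of local regularity is a reasonable repair.
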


\begin{proof}Fix $t\geq R_1^2\geq (2\cdot 288S)^2$, let $\sqrt\tau=\sqrt t\wedge R_2\in[R_1,R_2]$, and abbreviate $\rho:=\rho(x,y)$, $n_z:=n_z(\sqrt\tau)$, $a_z:=a_z(\sqrt \tau) $. Apply Theorem~\ref{thm:upperboundgeneral} with  \[a_z'=a_z'(\sqrt\tau)=\frac{a_z}{\tau}m(B_z(\sqrt \tau))^{{\frac{2}{n_z}}}, \quad z\in\{x,y\},\] to obtain
for all $t\geq (288S)^2$
\begin{multline*}
p_{t}(x,y)
\leq 
\Gamma^{n}_{x}(\sqrt{\tau_\rho})\Gamma^{n}_{y}(\sqrt{\tau_\rho})
\left[\frac{1\vee \frac{t}{S^2}\arsinh^2\left(\frac{\rho S}{t}\right)}{\sqrt{a_x'a_y'}\tau}\right]^{\frac{n_{xy}}{2}}
\euler^{-\Lambda(t-\tau)-\zeta(\rho,t)}
\\
=
\frac{\Gamma^{n}_{x}(\sqrt{\tau_\rho})\Gamma^{n}_{y}(\sqrt{\tau_\rho})}{(a_xa_y)^{\frac{n_{xy}}{4}}}
\frac{\left(1\vee \frac{t}{S^2}\arsinh^2\left(\frac{\rho S}{t}\right)\right)^{\frac{n_{xy}}{2}}}{\sqrt{m(B_x(\sqrt \tau))m(B_y(\sqrt \tau))}}
\euler^{-\Lambda(t-\tau)-\zeta(\rho,t)},
\end{multline*}
where $\Gamma^n_z$ contains the term $a_z'$. 
We are left with estimating $\Gamma^{n}_{z}(\sqrt{\tau_\rho})$ from above, $z\in\{x,y\}$. We start with some preparations. By Lemma~\ref{lem:localreg}, $FK(R_1,R_2,a,n)$ in $\{x,y\}$ yields the local regularity property $L(R_1,R_2,C\nu,n)$ in $\{x,y\}$, where $C=
\left(4/a_z(R)\right)^{\frac{n_z(R)}{2}}$, $z\in\{x,y\}$. Abbreviate further  $\theta_z:=\theta\left(n_z(\sqrt \tau),\sqrt {\tau_\rho}\right)$, $z\in\{x,y\}$. By the definition of $\Gamma^{n_z}_z$, we obtain
for $t\geq R_1^2$ and $z\in\{x,y\}$
\[
\Gamma^{n}_{z}(\sqrt{\tau_\rho})=C_{n_z,S}a_z^{\frac{n_z\theta_z }{4}}\left[\frac{m(B_z(\sqrt \tau))}{m(z)(\sqrt \tau)^{n_z}}\right]^{\frac{\theta_z}{2}}
\leq C_{n_z,S}a_z^{\frac{n_z\theta_z }{4}}\left(\frac{4}{a_z}\right)^{{\frac{n_z\theta_z}{4}}}\nu_z(R)^{\frac{\theta_z}{2}}
\leq 
2^{n_z}C_{n_z,S}\nu_z(R)^{\frac{\theta_z}{2}},
\]
where we used that  $\theta\leq 1$ implies $ a_z^{\frac{n_z\theta_z }{4}}\left(\frac{4}{a_z}\right)^{{\frac{n_z\theta_z}{4}}}=4^{{\frac{n_z\theta_z}{4}}}\leq 2^{n_z}$ and $C_{n_z,S}$ given by Theorem~\ref{thm:upperboundgeneral}.
Therefore,
\[
\frac{\Gamma^{n}_{x}(\sqrt{\tau_\rho})\Gamma^{n}_{y}(\sqrt{\tau_\rho})}{(a_xa_y)^{\frac{n_{xy}}{4}}}
\leq 2^{n_x}C_{n_x,S}2^{n_y}C_{n_y,S}\frac{\Psi_{xy}(\sqrt \tau)}{(a_xa_y)^{\frac{n_{xy}}{4}}}.
\]
Denoting the function $C=2^{n_x}C_{n_x,S}2^{n_y}C_{n_y,S}(a_xa_y)^{-\frac{n_{xy}}{4}}$ yields the claim.
\end{proof}

The following corollary for the normalizing measure is an immediate consequence.

\begin{corollary}[Gaussian bounds normalizing measure]\label{cor:gaussianboundcleannormalized}
Let $m=\deg$, $R_2\geq 2R_1\geq 600$, $a\in(0,1]$, $n>0$. If
$FK(R_1,R_2,a,n)$ holds in $\{x,y\}$, then there exists $C=C_{a,n}>0$ such that $G(R_1,R_2,C,n)$ holds in $\{x,y\}$.
\end{corollary}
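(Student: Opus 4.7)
The plan is to obtain this as a direct specialization of Theorem~\ref{thm:gaussianboundclean} to the normalizing measure. Three simplifications occur in this setting that collapse the general correction $\Psi$ and the parameters to constants.

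First, I would recall that for $m=\deg$ the combinatorial distance is intrinsic with jump size $S=1$, so the hypothesis $R_2\geq 2R_1\geq 600>2\cdot 288S$ of Theorem~\ref{thm:gaussianboundclean} is satisfied. Moreover, since $m(x)=\deg_x$, one has $\Deg_x=\deg_x/m(x)=1$ for every $x\in X$, hence
\[
\nu_{x,n}=[1\vee\Deg_x]^{n/2}=1 \qquad\text{for all } x\in X.
\]

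Second, with $a$ and $n$ now constants, the function $n_{xy}(\sqrt t\wedge R_2)=n$ is constant, and the definition of the correction factor gives, for all $\sqrt t\geq R_1$,
\[
\Phi_x^{n}(\sqrt{\tau_\rho})=\nu_{x,n}^{\theta(n,\sqrt{\tau_\rho})}=1,\qquad \Psi_{xy}(\sqrt t\wedge R_2)^2=\Phi_x^{n}(\sqrt{\tau_\rho})\Phi_y^{n}(\sqrt{\tau_\rho})=1,
\]
so the correction $\Psi_{xy}$ in Theorem~\ref{thm:gaussianboundclean} disappears. Therefore applying that theorem to the pair $\{x,y\}$ with the given $FK(R_1,R_2,a,n)$ hypothesis produces a constant $C=C_{a,n,a,n,1}=C_{a,n}$ such that, for all $t\geq R_1^2$,
\[
p_t(x,y)\leq C\,\frac{\left(1\vee \tfrac{t}{S^2}\arsinh^2\!\left(\tfrac{\rho(x,y)S}{t}\right)\right)^{n/2}}{\sqrt{m(B_x(\sqrt t\wedge R_2))\,m(B_y(\sqrt t\wedge R_2))}}\,\euler^{-\Lambda(t-t\wedge R_2^2)-\zeta(\rho(x,y),t)},
\]
which is exactly the statement $G(R_1,R_2,C,n)$ in $\{x,y\}$.

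There is no real obstacle here; the entire content is to verify that $\Psi\equiv 1$ under $m=\deg$, and to check that the threshold $R_1\geq 300$ exceeds the $288S=288$ threshold required by Theorem~\ref{thm:gaussianboundclean}. The constant $C_{a,n}$ inherits the structural form from the proof of that theorem (essentially $2^{2n}C_{n,1}^{2}/a^{n/2}$), with no dependence on the vertex degree because $\nu\equiv 1$.
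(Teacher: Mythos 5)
Your proposal is correct and follows essentially the same route as the paper: the paper's own proof is exactly the observation that for $m=\deg$ one has $\Deg\equiv 1$ and $S=1$, so $\nu\equiv 1$, the correction $\Psi$ collapses to $1$, and Theorem~\ref{thm:gaussianboundclean} applies directly since $2R_1\geq 600>2\cdot 288$. Your write-up just makes these reductions explicit.
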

\begin{proof}
If $R_0\geq 2\cdot 288$, since $\Deg=1$ and $S=1$, we infer from Theorem~\ref{thm:gaussianboundclean} the bounds $G(R_1,R_2,\tilde C,n)$ in $\{x,y\}$ with $\tilde C=C_{a,n,1}>0$.
\end{proof}

\section{From heat kernel bounds, regularity and  doubling to Faber-Krahn}\label{section:gausstoFK}

In this paragraph, we prove  (G) \ \& \ (V)  \ $\Rightarrow$ \ (FK) for the normalizing measure and fixed dimension as well as  (G) \ \& \ (V) \ \& \ (L) \ $\Rightarrow$ \ (FK) for arbitrary measures and variable dimensions, both for large balls. At the end of this section, we will prove the theorems in the introduction.
\\

The general strategy follows \cite{Grigoryan-09}. However, since we assume (G) \ \& \ (V) only on large balls, the resulting property (FK) contains an unpleasant error depending on the smallest measure in the considered balls. While this is good enough in case of the normalizing measure, a new idea is needed in order to obtain non-trivial bounds in (FK) for arbitrary measures. This is resolved by making use of  (L) and the introduction of a variable dimension in (FK) which absorbs the error and becomes controllable on large scales.
\\

In order to formulate the results of this section, we introduce the following notation for on-diagonal heat kernel bounds in balls.
\begin{definition}[Diagonal upper bound]Let $0\leq r_1\leq r_2$, $\Psi\colon X\times [r_1,r_2]\to[1,\infty)$ and $B\subset X$. The on-diagonal bound $O(r_1,r_2,\Psi)$ holds in $B$ if for all $x\in B$ we have
\[
p_t(x,x)\leq \frac{\Psi_x(\sqrt t)}{m(B_x(\sqrt t))},\quad t\in[r_1^2,r_2^2].
\]
\end{definition}
\begin{remark}
Clearly, $G(R_1,R_2,\Psi,n)$ in $B_o(R_2)$ implies $O(R_1,R_2,\tilde\Psi)$ in $B_o(R_2)$ with $\tilde\Psi_x(\sqrt t)=\Psi_{xx}(\sqrt t)$.
\end{remark}

First, we derive lower bounds on Dirichlet eigenvalues of subsets of balls assuming (G) \& (V) on large balls. 
Therefore, we make use of the proposition below. 
 \begin{proposition}[Comparing balls, {\cite[Lemma~5.1]{KellerRose-22a}}]\label{lem:ballcomparison}
 	Let $r\ge 8S$, $ n>0$, $\alpha,\Phi \geq 1$  be constants, 
 	and assume $V (r/4,r,\alpha\Phi,n)$ in $B_{o}(r)$. Then there for all $x,y\in B_o(r)$, we have 
 	\[
 	m(B_x(r))
 	\leq
 	2^{18n}\alpha^9\Phi ^{9}
 	\ m(B_{y}(r)).
 	\]
 \end{proposition}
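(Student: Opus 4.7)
The plan is to prove the comparison estimate by a chaining argument that goes from $x$ to $y$ through the center $o$, applying the volume doubling property along a sequence of nearby ball centers. The key structural fact I would exploit is that since $\rho$ is an intrinsic path metric with compact balls (Assumption~\ref{assumption2}), the space $(X,\rho)$ is geodesic, and any geodesic from $x$ to $o$ stays inside $B_o(r)$: for any vertex $p$ on such a geodesic, $\rho(o,p) + \rho(p,x) = \rho(x,o) \leq r$, so $\rho(o,p) \leq r$. The same holds for a geodesic from $o$ to $y$. Concatenating the two gives a path of total length at most $2r$ along which every intermediate vertex lies in $B_o(r)$, which is exactly where the doubling hypothesis is available.

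Along this concatenated path I would select discrete intermediate points $x=p_0,p_1,\dots,p_N=y$ with $\rho(p_i,p_{i+1}) \leq r/4$. Since the total path length is bounded by $2r$, I can arrange $N \leq 9$ such points, and the assumption $r \geq 8S$ ensures $r/4 \geq 2S$, so a vertex spacing of this order is consistent with the graph structure (the jump size does not obstruct the discretization). For each consecutive pair, the triangle inequality yields the inclusion
\[
B_{p_{i+1}}(r - \rho(p_i,p_{i+1})) \subset B_{p_i}(r).
\]
Since $r - \rho(p_i,p_{i+1}) \geq 3r/4$ lies in the doubling range $[r/4, r]$, and $p_{i+1} \in B_o(r)$, I may apply $V(r/4,r,\alpha\Phi,n)$ at $p_{i+1}$ between the radii $r - \rho(p_i,p_{i+1})$ and $r$ to conclude
\[
m(B_{p_{i+1}}(r)) \leq \alpha\Phi \left(\frac{r}{r-\rho(p_i,p_{i+1})}\right)^n m(B_{p_{i+1}}(r - \rho(p_i,p_{i+1}))) \leq \alpha\Phi\left(\frac{4}{3}\right)^n m(B_{p_i}(r)).
\]

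Iterating this local comparison across all $N \leq 9$ steps produces
\[
m(B_y(r)) \leq \left(\alpha\Phi\right)^{9}\left(\frac{4}{3}\right)^{9n} m(B_x(r)) \leq 2^{18n}\alpha^{9}\Phi^{9}\, m(B_x(r)),
\]
using $(4/3)^{9n} \leq 4^{9n} = 2^{18n}$. Exchanging the roles of $x$ and $y$ (which is symmetric in the argument) gives the reversed inequality.

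I expect the main technical obstacle to be the discretization step: on a graph one cannot freely place intermediate points at arbitrary distances, so the spacing $r/4$ must be realized by actual vertices on the geodesic, and care is required to ensure each consecutive step is at most $r/4$ while also not being so small that the chain length exceeds $9$. This is precisely what the condition $r \geq 8S$ controls, by ensuring the geodesic is long enough relative to the jump size for such a $9$-step net to exist. The remaining bookkeeping --- tracking factors of $\alpha$, $\Phi$, and $(4/3)^n$ across the chain and using $r-\rho(p_i,p_{i+1}) \geq 3r/4$ uniformly --- is routine once the path and its discretization are in place.
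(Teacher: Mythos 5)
This proposition is not proved in the paper at all --- it is quoted verbatim from \cite[Lemma~5.1]{KellerRose-22a} --- so there is no internal proof to compare against. Your chaining strategy (concatenate geodesics $x\to o\to y$, note that every vertex on either geodesic stays in $B_o(r)$, and iterate the inclusion $B_{p_{i+1}}(r-\rho(p_i,p_{i+1}))\subset B_{p_i}(r)$ together with one application of doubling per link) is the standard and correct route, and the shape of the constant $2^{18n}\alpha^9\Phi^9=(4^{2n}\alpha\Phi)^{9}\cdot 4^{-9n}\cdot 4^{9n}$ strongly suggests the cited lemma is proved the same way with nine doubling steps of radius ratio at most $4$.

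The one step that does not survive scrutiny as written is the discretization count. If you insist on spacing $\rho(p_i,p_{i+1})\le r/4$, then on a graph a maximal (greedy) choice of the next vertex along the geodesic only guarantees an advance of more than $r/4-S$, and since $S$ may be as large as $r/8$ this is only $r/8$ per step; covering the two geodesic segments of total length up to $2r$ can then require up to $16$ links, and $(\alpha\Phi)^{16}$ is \emph{not} dominated by $\alpha^9\Phi^9$. So the assertion ``I can arrange $N\le 9$ points with spacing $\le r/4$'' is false in the worst case, and this is exactly where the claimed constant would be lost. The repair is to relax the spacing rather than the step count: place targets at distances $ir/4$ along each geodesic and snap each target to the last geodesic vertex at distance at most $ir/4$ from the start of that segment. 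Since consecutive geodesic vertices are at most $S$ apart, consecutive selected vertices satisfy $\rho(p_i,p_{i+1})\le r/4+S\le 3r/8$, which still keeps the smaller radius $r-\rho(p_i,p_{i+1})\ge 5r/8\ge r/4$ inside the doubling window, while the number of links is at most $4$ per segment, hence $8$ in total. Each link then costs $\alpha\Phi(8/5)^n$ instead of $\alpha\Phi(4/3)^n$, and $(\alpha\Phi)^8(8/5)^{8n}\le \alpha^9\Phi^9\,2^{18n}$ with room to spare. With this modification your argument is complete.
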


The following lemma gives a general lower bound for the first eigenvalue of a subset of a ball given heat kernel bounds and doubling on large scales. A variable dimension is already introduced here to deal with the correction terms $\Psi$ and $\Phi$ coming from (G) and (V), respectively. Our argument is an elaboration on  \cite{Grigoryan-09} and presented to get precise dependence on the constants for later use.
\begin{lemma}\label{lem:FKapriori}Let  $0\leq \hat r\leq r$, $n>0$, $\alpha,\Phi,\Psi\geq 1,$ $o\in X$, 
and assume $O(\hat r,r,\alpha\Psi)$
and $V(\hat r,r,\alpha\Phi,n)$ in $B_o(r)$.
Then, for all $U\in B_o(r)$ we have 
\[
\lambda(U)\geq \sup_{t\in [\hat r^2, r^2]}\frac{1}{t}\ln\left(\frac{1}{2^{18n}\alpha^{11}\Phi^{10}\Psi}\frac{m(B_o(r))}{m(U)}\left(\frac{\sqrt t}{r}\right)^{n}\right).
\]
\end{lemma}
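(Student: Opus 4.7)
The plan is to follow Grigor'yan's strategy \cite{Grigoryan-09}: bound the diagonal heat-kernel trace on $U$ from below by $\euler^{-\lambda(U)t}$, bound it from above using the on-diagonal estimate transplanted to the large ball via volume doubling and ball comparison, and then take the logarithm.

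\emph{Step 1 (spectral trace bound).} First assume $U$ is finite; the general case will follow by the monotone approximation $U_n\nearrow U$, since $\lambda(U)=\lim_n\lambda(U_n)$ and the claimed right-hand side is monotone in $m(U)$. For finite $U$, the Dirichlet Laplacian $\Delta^U$ has discrete spectrum $\lambda(U)=\lambda_0\leq\lambda_1\leq\cdots$ with an orthonormal eigenbasis $\{\varphi_k\}$, so Parseval gives $\sum_{x\in U}m(x)p^U_t(x,x)=\sum_k\euler^{-\lambda_k t}\geq\euler^{-\lambda(U)t}$. Combined with domain monotonicity $p^U_t\leq p_t$, this yields
\[
\euler^{-\lambda(U)t}\leq \sum_{x\in U}m(x)\,p_t(x,x).
\]

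\emph{Step 2 (pointwise diagonal bound).} For $x\in U\subset B_o(r)$ and $t\in[\hat r^2,r^2]$, the hypothesis $O(\hat r,r,\alpha\Psi)$ gives $p_t(x,x)\leq \alpha\Psi/m(B_x(\sqrt t))$. Since $\sqrt t\in[\hat r,r]$, the volume doubling $V(\hat r,r,\alpha\Phi,n)$ at $x$ yields $m(B_x(r))\leq \alpha\Phi(r/\sqrt t)^n m(B_x(\sqrt t))$, and Proposition~\ref{lem:ballcomparison} with $y=o$ gives $m(B_o(r))\leq 2^{18n}\alpha^9\Phi^9 m(B_x(r))$. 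Chaining these three inequalities produces
\[
p_t(x,x)\leq \frac{2^{18n}\alpha^{11}\Phi^{10}\Psi}{m(B_o(r))}\left(\frac{r}{\sqrt t}\right)^n.
\]

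\emph{Step 3 (conclusion).} Summing Step~2 over $x\in U$ with weights $m(x)$, inserting into Step~1, taking $-\ln$, dividing by $t$, and finally taking the supremum over $t\in[\hat r^2,r^2]$ yields exactly the claimed inequality. The argument is essentially routine once Proposition~\ref{lem:ballcomparison} is in hand; the only mild points of care are the monotone approximation for infinite $U$ in Step~1 and the fact that the applicability of Proposition~\ref{lem:ballcomparison} presupposes the range condition on $r$ versus $S$ that is implicit in the lemma's hypotheses. No new ideas beyond the classical spectral-trace template are required.
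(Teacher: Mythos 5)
Your proposal is correct and follows essentially the same route as the paper: the spectral trace bound $\euler^{-\lambda(U)t}\leq\sum_U m\,p_t^U(\cdot,\cdot)\leq\sum_U m\,p_t(\cdot,\cdot)$, the on-diagonal bound, and then Proposition~\ref{lem:ballcomparison} plus volume doubling to compare $m(B_x(\sqrt t))$ with $m(B_o(r))$, yielding exactly the constant $2^{18n}\alpha^{11}\Phi^{10}\Psi$. The only cosmetic difference is your monotone-approximation caveat for infinite $U$, which is unnecessary here since distance balls are compact (hence finite) by Assumption~\ref{assumption2}.
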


\begin{proof}
Let $U\subset B_o(r)$, $p^U$ the Dirichlet heat kernel of $U$, $\lambda_i$, $i\in\{1,\ldots, \vert U\vert\}$ the Dirichlet eigenvalues of $U$ and denote $\lambda_1=\lambda$. Since $p^U\leq p$ and $O(\hat r,r,\alpha\Psi)$ in $B_o(r)$, we have for all $t\in[\hat r^2,r^2]$
\[
e^{-\lambda t}\leq \sum_{i=1}^n \euler^{-\lambda_i t}=\sum_U m\ p_t^U(\cdot,\cdot)\leq \sum_Um\ p_t(\cdot,\cdot)\le \alpha\Psi\sum_{x\in U}\frac{m(x)}{ m(B_x(\sqrt t))}.
\]
In order to bound the sum on the right-hand side from above, we employ the doubling property. We obtain for all $t\in [\hat r^2, r^2]$ by Proposition~\ref{lem:ballcomparison} and the volume doubling property for all $x\in U\subset B_o(r)$
\[
\frac{m(B_o(r))}{m(B_x(\sqrt t))}
\leq 2^{18n}\alpha^9\Phi ^{9}\frac{m(B_x(r))}{m(B_x(\sqrt t))}\leq \alpha\Phi 2^{18n}\alpha^9\Phi ^{9} \left(\frac{r}{\sqrt t}\right)^{n}=2^{18n}\alpha^{10}\Phi ^{10} \left(\frac{r}{\sqrt t}\right)^{n}.
\]
Hence, we obtain 
\[
\alpha\Psi\sum_{x\in U}\frac{m(x)}{ m(B_x(\sqrt t))}
\leq 
\frac{2^{18n}\alpha^{11}\Phi ^{10}\Psi}{m(B_o(r))} \ m(U)\left(\frac{r}{\sqrt t}\right)^{n}.
\]
Plugging in this estimate for the exponential of $\lambda(U)$ and rearranging yield the claim.
\end{proof}

The following lemma shows that doubling on large scales implies reverse doubling on large scales. We follow \cite{Grigoryan-09} and track the constants for late use.

\begin{lemma}[reverse doubling]\label{lem:reversedoubling}
$32S\leq 8\hat r\leq r\leq \diam X/2$, $o\in X$, constants $n>0$, $\alpha,\Phi\geq 1$, and assume 
$V(\hat r,r,\alpha\Phi,n) $ in $B_o(r)$.
Then, we have 
\[
m(B_o(r_2))\geq \frac12\left(\frac{r_2}{r_1}\right)^{\eta} m(B_o(r_1)),\quad 4\hat r\leq r_1\leq r_2\leq r/2,
\]
where
$
\eta=2^{-21n}\alpha^{-10}\Phi ^{-10}.
$
\end{lemma}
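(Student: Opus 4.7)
The plan is to carry out the standard chaining argument of Grigor'yan in the discrete path-metric setting, using the path-metric property (from Assumption~\ref{assumption2}) to find an intermediate point in a prescribed annulus and then iterating a single-step volume growth estimate.

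First, I would prove the following one-step inequality: for any $r_1$ with $4\hat r \leq r_1 \leq r/2$, one has $m(B_o(2r_1)) \geq (1 + 1/K)\, m(B_o(r_1))$ with $K = 2^{21n}\alpha^{10}\Phi^{10}$. Because $r \leq \diam X/2$, there exists $z \in X$ with $\rho(o,z) \geq 2r_1$. Using that $\rho$ is a path metric with jump size $S \leq \hat r \leq r_1/4$, pick a geodesic $o = x_0 \sim \cdots \sim x_N = z$ and choose $y = x_j$ to be the first vertex with $\rho(o,y) > 5r_1/4$; then $5r_1/4 < \rho(o,y) \leq 5r_1/4 + S \leq 3r_1/2$. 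Consequently $B_y(r_1/4)$ is disjoint from $B_o(r_1)$ and contained in $B_o(2r_1)$.

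Next I would estimate $m(B_y(r_1/4))$ from below in terms of $m(B_o(2r_1))$ using the two volume tools at hand. Applying the volume doubling $V(\hat r, r, \alpha\Phi, n)$ at $y$ (legal since $y \in B_o(r)$, and $\hat r \leq r_1/4 \leq 2r_1 \leq r$) yields
\[
m(B_y(r_1/4)) \geq \frac{m(B_y(2r_1))}{\alpha\Phi\, 8^n}.
\]
Then Proposition~\ref{lem:ballcomparison}, applied in $B_o(2r_1)$ (its hypothesis $V(r_1/2, 2r_1, \alpha\Phi, n)$ follows from the assumed doubling since $r_1/2 \geq 2\hat r$ and $2r_1 \leq r$), gives $m(B_y(2r_1)) \geq m(B_o(2r_1))/(2^{18n}\alpha^9\Phi^9)$. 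Combining the two yields $m(B_y(r_1/4)) \geq m(B_o(2r_1))/K$. Since $B_y(r_1/4) \subset B_o(2r_1) \setminus B_o(r_1)$, additivity of the measure gives $m(B_o(2r_1)) \geq m(B_o(r_1)) + m(B_o(2r_1))/K$, which rearranges to the one-step inequality.

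Finally, I iterate. For $4\hat r \leq r_1 \leq r_2 \leq r/2$, set $k = \lfloor \log_2(r_2/r_1)\rfloor$, so that $2^k r_1 \leq r_2 < 2^{k+1} r_1$ and $2^i r_1 \leq r/2$ for every $i \leq k$; the one-step inequality therefore applies at each doubling step and gives $m(B_o(2^k r_1)) \geq (1+1/K)^k m(B_o(r_1))$. Choosing $\eta$ so that $2^\eta \leq 1 + 1/K$ (i.e.\ $\eta \leq \log_2(1+1/K)$), the bound $\log_2(1+x) \geq x/2$ on $(0,1]$ combined with $K = 2^{21n}\alpha^{10}\Phi^{10}$ shows that $\eta = 2^{-21n}\alpha^{-10}\Phi^{-10}$ is an admissible choice. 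Then
\[
(r_2/r_1)^\eta \leq 2^{(k+1)\eta} \leq 2 (1+1/K)^k \leq 2\, \frac{m(B_o(r_2))}{m(B_o(r_1))},
\]
which is the claim.

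The main obstacle is purely discrete: ensuring the existence of an intermediate vertex $y$ at a prescribed distance. This is where the path-metric assumption is essential (continuous geodesics would give exact distances for free), and it is the reason $r_1 \geq 4\hat r \geq 4S$ is needed so that the jump size is small enough to land in the half-open target window $(5r_1/4,\, 3r_1/2]$. The remaining work is bookkeeping of the constants to pin the exponent down to exactly $\eta = 2^{-21n}\alpha^{-10}\Phi^{-10}$.
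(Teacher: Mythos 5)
Your argument is essentially the paper's proof: use the path-metric property to place an intermediate vertex $y$ on a geodesic inside the annulus $B_o(2r_1)\setminus B_o(r_1)$, bound the volume of a small ball around $y$ from below via Proposition~\ref{lem:ballcomparison} together with the doubling hypothesis, deduce the one-step growth $m(B_o(2r_1))\geq(1+1/K)\,m(B_o(r_1))$, and iterate over dyadic radii; the only (cosmetic) difference is that you compare $m(B_y(r_1/4))$ to the larger ball $m(B_o(2r_1))$ where the paper compares its small ball to $m(B_o(r_1))$. One small constant slip at the very end: the bound $\log_2(1+x)\geq x/2$ only certifies $\eta=2^{-21n-1}\alpha^{-10}\Phi^{-10}$, i.e.\ half the stated exponent; to land exactly on $\eta=1/K$ use instead $2^x\leq 1+x$ on $[0,1]$ (equivalently $\log_2(1+x)\geq x$ there, by convexity of $x\mapsto 2^x$), which makes $2^{\eta}\leq 1+1/K$ immediate.
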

\begin{remark}Note that Proposition~\ref{lem:ballcomparison} and Lemma~\ref{lem:reversedoubling} are the only places in the proof of the main theorems where we use that the intrinsic metric is a path degree metric. Instead of assuming the intrinsic metric is a path degree metric, we could also assume that the reverse volume doubling condition holds for the measure.
\end{remark}
\begin{proof} We mimick the classical proof and show volume comparison for balls where the radii differ only by factor two and iterate the resulting inequality.
\\
Let $r_1\geq4 \hat r$. Since $r\leq \diam X/2$ and $X$ is connected, there exists $y\in X\setminus B_o(r)$. Since $(X,\rho)$ is geodesic, we can take a a path $x=x_0\sim\dots\sim x_n=y$ from $o$ to $y$ realizing $\rho(o,y)$ and $N\in\{0,\ldots,n\}$ the minimal $N$ with the property $\rho(o,x_N)\geq 3r_1/2$. Then clearly $\rho(o,x_N)\leq S+3r_1/2$. Together with the triangle inequality, this yields $B_{x_N}(\tfrac{r_1}2-S)\subset B_o(2r_1)\setminus B_o(r_1)$.
Hence, we obtain 
\[
m(B_o(2r_1))\geq m(B_o(r_1))+m(B_{x_N}(\tfrac{r_1}2-S)).
\]
Since $S\leq \hat r/4$, $\hat r\leq r_1/4$, and $r_1\leq r/2$, we have 
\[\hat r=\hat r+S-S\leq 5\hat r/4-S\leq 5r_1/16-S\leq r_1/2-S\leq 2r_1\leq r.\]
Further, since $r_1\geq 4S$, we get   $\frac{r_1}{r_1/2-S}=\frac{2r_1}{r_1-2S} \leq \frac{2r_1}{r_1/2}=4$. Therefore, since $r/2\geq r_1\geq 4\hat r\geq 32S$,  by applying Proposition~\ref{lem:ballcomparison} to $B_{o}(2r_1)$ we obtain 
\[
\frac{m(B_x(r_1))}{m(B_{x_N}(r_1/2-S))}\leq 2^{18n}\alpha^9\Phi ^{9}
\frac{m(B_{x_N}(r_1))}{m(B_{x_N}(r_1/2-S))}
\leq 
2^{18n}\alpha^9\Phi ^{9}\alpha\Phi \left(
\frac{r_1}{r_1/2-S}\right)^{n}
\\\leq 
2^{20n}\alpha^{10}\Phi ^{10} .
\]
Hence, denoting $\tilde \alpha:= \frac{2^{20n}\alpha^{10}\Phi ^{10}+1}{2^{20n}\alpha^{10}\Phi ^{10}}$, we get
\[
m(B_o(2r_1))\geq m(B_o(r_1))+m(B_{x_N}(\tfrac{r_1}2-S))\geq \tilde \alpha\
m(B_o(r_1)).
\]

Now let $\hat r\leq r_1\leq r_2\leq r$ and $k\in\NN_0$ maximal with the property
\[
2^k\leq \frac{r_2}{r_1}<2^{k+1}.
\]
Then we obtain
\[
m(B_o(r_2))\geq m(B_o(2^kr_1))\geq \tilde \alpha\ 
m(B_o(2^{k-1}r_1))\geq \ldots \geq 
\tilde \alpha^{k}m(B_o(r_1)),
\]
and, since $k\geq \log_2\tfrac{r_2}{2r_1}$,
\[
\tilde \alpha^k
\geq  
\tilde \alpha^{\log_2\tfrac{r_2}{2r_1}}
=\tilde \alpha^{-1}
\left(\frac{r_2}{r_1}\right)^{\log_2 \tilde \alpha
}.
\]
We have
\[
\tilde \alpha^{-1}
=\frac{2^{20n}\alpha^{10}\Phi^{10} }{2^{20n}\alpha^{10}\Phi^{10} +1}\geq \frac{2^{20n}\alpha^{10}\Phi^{10} }{2\cdot 2^{20n}\alpha^{10}\Phi^{10}}=\frac12.
\]
By the mean value theorem and $\ln2\leq 1$ we have $\log_2\frac{x+1}x\geq {1}/({x+1})$
and thus, since $\alpha\geq 1$,
\[
\log_2\tilde \alpha
\geq \frac{1}{2^{20n}\alpha^{10}\Phi ^{10}+1}\geq \frac{1}{2^{21n}\alpha^{10}\Phi ^{10}}=\eta.\qedhere
\]
\end{proof}

Next, we derive (FK) from (O) and (V). At the heart is the following lemma which adds an additional assumption to Grigor'yan's argument in \cite{Grigoryan-09}. Later, we will use the new assumption for two different purposes; a fixed dimension and a variable dimension. If the dimension is assumed to be fixed, the condition will depend on the worst vertex degree and inverted measure inside the considered ball, such that all estimates will depend on these notions. If variable dimensions are allowed, together with the local regularity condition, the new condition can be used to absorb bad vertex degrees on large scales.

\begin{lemma}\label{lem:choicegammanprime}
Let $32S\leq 8\hat r\leq r\leq \diam M/2$, $o\in X$, and  constants $A,\alpha,\gamma,\Phi ,\Psi \geq 1$, and $n'\geq n>0$ satisfying
\[
A\geq \exp(2^{19n}\euler\Phi^{10}\Psi),\qquad 
\gamma\left(\frac{r}{\hat r}\right)^{n'}
 \geq m(B_o(r))\|\tfrac1m\|_{B_o(r)},
 \qquad
 \gamma\left(\frac{Ar}{\hat r}\right)^{n'}
 \geq m(B_o(Ar))\|\tfrac1m\|_{B_o(Ar)}.
\]
If $O(\hat r,2Ar,\alpha\Psi)$ 
and $ V(\hat r,2Ar,\alpha\Phi, n)$ hold in $B_o(Ar)$, then
$FK(r,r,a,n')$  holds in $o$, where
\[
a=\left[\frac{1}{\gamma \alpha^{11}\ln A}\right]^{\frac{2}{n}}\frac{1}{A^2}.
\]
\end{lemma}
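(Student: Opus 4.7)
The plan is to apply Lemma~\ref{lem:FKapriori} on the enlarged ball $B_o(Ar)$ (with inner scale $\hat r$) and optimise the time parameter. The hypotheses of that lemma on $B_o(Ar)$ follow from $O(\hat r, 2Ar, \alpha\Psi)$ and $V(\hat r, 2Ar, \alpha\Phi, n)$ in $B_o(Ar)$; the internal use of Proposition~\ref{lem:ballcomparison} needs $\hat r \leq Ar/4$, which is guaranteed by $\hat r \leq r/8$ and $A \geq \exp(2^{19n}\euler\Phi^{10}\Psi) \geq 4$. Writing $C_0 := 2^{18n}\alpha^{11}\Phi^{10}\Psi$ and $X := m(B_o(Ar))/m(U)$ for an arbitrary $U \subset B_o(r)$, the lemma gives
\[
\lambda(U) \geq \sup_{t \in [\hat r^2, (Ar)^2]} \frac{1}{t} \ln\!\left(\frac{X}{C_0}\left(\frac{\sqrt t}{Ar}\right)^{n}\right).
\]

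Next I optimise in $t$. With $s := \sqrt t/(Ar) \in [\hat r/(Ar),1]$, the function $s \mapsto s^{-2}\ln(Xs^n/C_0)$ has its unique interior critical point at $s^{\ast} = (\euler^{n/2}C_0/X)^{1/n}$, where it takes the value $nX^{2/n}/(2\euler C_0^{2/n})$. In the main range where $s^{\ast}$ is admissible one obtains
\[
\lambda(U) \geq \frac{n}{2\euler A^{2}C_0^{2/n}} \cdot \frac{X^{2/n}}{r^{2}}.
\]
To reach the claimed form, I use measure monotonicity $X \geq X_r := m(B_o(r))/m(U) \geq 1$ together with $n' \geq n$ to conclude $X^{2/n} \geq X_r^{2/n'}$. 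The calibration of $a$ in the statement then reduces the remaining numerical task to $\gamma\alpha^{11}\ln A \geq (2\euler/n)^{n/2}C_0$, which follows from $\ln A \geq 2^{19n}\euler\Phi^{10}\Psi$ and the elementary estimate $\euler(2n/\euler)^{n/2} \geq 1$ valid for every $n > 0$.

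The two boundary regimes $s^{\ast} \notin [\hat r/(Ar),1]$ need separate treatment and are where the bulk of the bookkeeping lies. When $X > \euler^{n/2}C_0(Ar/\hat r)^n$ (very small $U$), the boundary choice $s = \hat r/(Ar)$ yields $\lambda(U) \geq n/(2\hat r^2)$; combined with $X_r^{2/n'} \leq \gamma^{2/n'}(r/\hat r)^2$ from the hypothesis on $\gamma$, the target bound is at most $a\gamma^{2/n'}/\hat r^2$, which the large-$A$ assumption absorbs. The small-$X$ regime is the more delicate one: there I would use eigenvalue monotonicity $\lambda(U) \geq \lambda(B_o(r))$ and apply the argument to $U' = B_o(r)$, invoking the reverse volume doubling (Lemma~\ref{lem:reversedoubling}, applicable since $r \leq \diam X/2$ and $V$ holds up to scale $2Ar$) to lower bound $m(B_o(Ar))/m(B_o(r))$ by $A^{\eta}/2$ with $\eta = 2^{-21n}\alpha^{-10}\Phi^{-10}$. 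The precise form $A \geq \exp(2^{19n}\euler\Phi^{10}\Psi)$ is tuned so that $\eta\ln A$ dominates the residual constants. The principal obstacle throughout is carrying out this three-regime bookkeeping cleanly so that it closes to the single Faber-Krahn bound with exponent $2/n'$ and the stated $a$.
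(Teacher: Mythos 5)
Your overall architecture matches the paper's: apply Lemma~\ref{lem:FKapriori} (via Proposition~\ref{lem:ballcomparison}), choose the time parameter, and split into cases according to the size of $m(U)$, with Lemma~\ref{lem:reversedoubling} handling the large-$m(U)$ case. Your "main range" and "very small $U$" computations are sound — indeed the genuine optimisation in $t$ and the verification $\euler(2n/\euler)^{n/2}\geq 1$ are cleaner than the paper's fixed choice of $t$ calibrated by $\epsilon=2/(\gamma\alpha^{11}\ln A)$, and your use of $X^{2/n}\geq X_r^{2/n'}$ (valid since $X\geq X_r\geq 1$ and $n'\geq n$) is exactly the role the inequality $(\sqrt t/r)^{n}\geq(\sqrt t/r)^{n'}$ plays in the paper.

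The gap is in your small-$X$ regime, and it is not mere bookkeeping. There you propose $\lambda(U)\geq\lambda(B_o(r))$ and then run the optimisation for $U'=B_o(r)$ inside $B_o(Ar)$, using reverse doubling to get $X'=m(B_o(Ar))/m(B_o(r))\geq \tfrac12 A^{\eta}$ with $\eta=2^{-21n}\alpha^{-10}\Phi^{-10}$, and you assert that "$\eta\ln A$ dominates the residual constants." It does not: the hypothesis only forces $\ln A\geq 2^{19n}\euler\Phi^{10}\Psi$, which contains no power of $\alpha$, so $\eta\ln A\leq \euler\Psi\,2^{-2n}\alpha^{-10}\to 0$ as $\alpha\to\infty$, i.e.\ $A^{\eta}$ can be arbitrarily close to $1$. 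Meanwhile the constant it must beat, $C_0=2^{18n}\alpha^{11}\Phi^{10}\Psi$, grows like $\alpha^{11}$. Since in this regime the optimisation lands at the endpoint $s=1$, your lower bound degenerates to $\tfrac{1}{(Ar)^2}\ln\bigl(A^{\eta}/(2C_0)\bigr)$, which is negative for large $\alpha$ — so the case does not close. The paper's route here is different and is where the statement's precise hypotheses are consumed: it sets $\epsilon=2/(\gamma\alpha^{11}\ln A)$, shows in the complementary case that $m(U)\leq \epsilon\, m(B_o(Ar))$ (via reverse doubling applied to $A^{\eta}$ against $2/\epsilon$), and then reruns the \emph{small}-$U$ argument verbatim at scale $Ar$, where the logarithm is calibrated to the fixed value $\ln(\euler\gamma)\geq 1$ and the comparison $m(B_o(Ar))\geq m(B_o(r))$ together with $a=\epsilon^{2/n}2^{-2/n}A^{-2}$ closes the estimate. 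To repair your argument you would need to reproduce that mechanism — i.e.\ show that $U$ is an $\epsilon$-small subset of the \emph{enlarged} ball and re-enter your regime-1/main-range analysis at radius $Ar$ — rather than lower-bounding $\lambda(B_o(r))$ directly.
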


\begin{proof}We adapt the proof from \cite{Grigoryan-09} to our setting.
Set $\epsilon=  {2}/(\gamma\alpha^{11}\ln A)\in(0,\gamma^{-1}]$ and
let  $U\subset B_o(r)$ be arbitrary but fixed. 

First, consider the case 
\[
m(U)\leq 
\epsilon
\ m(B_o(r)).
\] 
Since  $A\geq 1$ 
we have $O(\hat r,2r,\alpha\Psi)$ 
and $ V(\hat r,2r,\alpha\Phi, n)$ in $B_o(r)$. By Lemma~\ref{lem:FKapriori}, we get a lower bound on $\lambda(U)$ involving a supremum over the interval $[\hat r^2,r^2]$. We choose $t\in[\hat r^2,r^2]$ and adapt the constants accordingly.

\eat{
Set $\epsilon=  {2}/(\gamma\alpha^{11}\ln A)\in(0,\gamma^{-1}]$. 

First, consider the case 
\[
m(U)\leq 
\epsilon
\ m(B_o(r))
\]
and 
}
Let
\[
 t=\left(\frac1\epsilon 
\frac{m(U)}{m(B_o(r))}\right)^{\frac{2}{{n'}}}r^2.
\]
We have $t\leq r^2$ and infer from the three bounds 
\[
\frac1{\epsilon\gamma}\geq 1,\qquad m(U)\geq \inf_{B_o(r)}m=\frac1{\|\tfrac1m\|_{B_o(r)}},
\qquad\text{and}\qquad \gamma\left(\frac{r}{\hat r}\right)^{n'}
\geq \|\tfrac1m\|_{B_o(r)}m(B_o(r)),
\]
the estimate
\[
t=\left(\frac1{\gamma\epsilon}
\frac{\gamma m(U)}{m(B_o(r))}\left(\frac{r}{\hat r}\right)^{n'}\right)^{\frac{2}{{n'}}} \hat r^2
\geq \left(
\frac{\gamma}{\|\tfrac1m\|_{B_o(r)}m(B_o(r))}\left(\frac{r}{\hat r}\right)^{n'}\right)^{\frac{2}{{n'}}} \hat r^2
\geq \hat r^2,
\]
such that we have $t\in[\hat r^2,r^2]$. The aformentioned lower bound on $\lambda(U)$ inferred from Lemma~\ref{lem:FKapriori} involves the term $(\sqrt t/r)^n$. Since $\sqrt t\leq r$, we can estimate $n$ by a larger number $n'$ and therefore decrease this term. 
Putting $t$ into the resulting lower bound, and using the assumption $\Psi\Phi^{10}\leq (\ln A)/(2^{19n}\euler)$ and $\epsilon<1$, we get
\begin{multline*}
\lambda(U)\geq \sup_{t\in [\hat r^2, r^2]}\frac{1}{t}\ln\left(
\frac{1}{2^{18n}\alpha^{11}\Phi^{10}\Psi}\frac{m(B_o(r))}{m(U)}\left(\frac{\sqrt t}{r}\right)^{n'}\right)
\geq 
\frac{\epsilon^{\frac{2}{{n'}}} }{r^2}
\left(
\frac{m(B_o(r))}{m(U)}\right)^{\frac{2}{{n'}}}\ln\left(
\frac{1}{2^{18n}\alpha^{11}\Phi^{10}\Psi}\frac1{\epsilon}\right)
\\
\geq 
\frac{\epsilon^{\frac{2}{{n}}} }{r^2}
\left(
\frac{m(B_o(r))}{m(U)}\right)^{\frac{2}{{n'}}}\ln\left(
\frac{1}{2^{18n}\alpha^{11}}\frac{2^{19n}\euler}{\ln A}\frac1{\epsilon}\right)
=
\frac{\epsilon^{\frac{2}{{n}}} }{r^2}
\left(
\frac{m(B_o(r))}{m(U)}\right)^{\frac{2}{{n'}}}\ln\left(
\euler\gamma\right)
\geq \frac{a}{r^2}
\left(
\frac{m(B_o(r))}{m(U)}\right)^{\frac{2}{{n'}}},
\end{multline*}
\eat{We need to choose $\epsilon\in(0,\gamma^{-1}]$ such that  the logarithm is larger or equal to one. 
Take $\epsilon=  {2}/({\alpha^{11}\ln A}\gamma)$ and use $\gamma\geq 1$ to obtain
\[
\lambda(U)\geq \frac{\epsilon^{\frac{2}{{n}}} }{r^2}
\left(
\frac{m(B_o(r))}{m(U)}\right)^{\frac{2}{{n'}}}\ln (\euler\gamma)
\geq \frac{\epsilon^{\frac{2}{{n}}} }{r^2}
\left(
\frac{m(B_o(r))}{m(U)}\right)^{\frac{2}{{n'}}},
\]
}
what yields the result for the first case.

Secondly, assume 
\[
m(U)>
\epsilon\ m(B_o(r)).
\]
By assumption, we have  $O(\hat r,2Ar,\alpha\Psi)$ 
 and $ V(\hat r,2Ar,\alpha\Phi, n)$ in $B_o(Ar)$. By Lemma~\ref{lem:reversedoubling}, we have the reverse volume doubling property for radii up to $Ar\geq r$.
Hence, we obtain \[
\frac{m(B_o(Ar))}{m(B_o(r))}\geq 
\frac12A^{\eta},
\]
where 
$\eta=2^{-21n}\alpha^{-10}\Phi ^{-10}
$. 
Since $\alpha\geq 1$, the assumption on $A$ implies $2^{1+2n}/\alpha\leq \ln A$ and hence the estimate $\exp(\euler/2^{2n}\alpha^{10})\geq 4/(\alpha^{11}\ln A) $.
Together with $A\geq 1$ and $\Phi^{10}\leq \Phi^{10}\Psi\leq {\ln A}/({2^{19n}\euler})$, we obtain
\[
A^\eta=\exp\left(\frac{\ln A}{2^{21n}\alpha^{10}\Phi ^{10}}\right)
\geq \exp\left(\frac{\ln A}{2^{21n}\alpha^{10}\frac{\ln A}{2^{19n}\euler}}\right)
=
\exp\left(\frac{\euler}{2^{2n}\alpha^{10}}\right)
\geq \frac{2}{\alpha^{11}\ln A}\geq \frac{2}{\epsilon}.
\]
Therefore, 
\[
m(U)
\leq 
\epsilon\
m(B_o(Ar)).
\]
Since our assumptions hold for radii up to $Ar$, we can argue as  in the first case replacing the radius $r$ by the radius $Ar$ and using 
\[
m(U)\geq \inf_{B_o(Ar)}m=\frac1{\|\tfrac1m\|_{B_o(Ar)}}\qquad \text{and}\qquad
\gamma\left(\frac{Ar}{\hat r}\right)^{n'}
 \geq m(B_o(Ar))\|\tfrac1m\|_{B_o(Ar)}
\]
instead of the corresponding inequality for $A=1$ to obtain
\[
\lambda(U)
\geq 
\frac{
\epsilon^{\frac{2}{{n}}}}{(Ar)^2}
\left(
\frac{m(B_o(Ar))}{m(U)}\right)^{\frac{2}{{n'}}} 
\geq
\frac{a}{r^2}
\left(
\frac{m(B_o(r))}{m(U)}\right)^{\frac{2}{{n'}}}.
\]
This is the claim.
\end{proof}

\begin{theorem}[Normalizing measure]\label{thm:normalizedclassicalFK}
Let $m=\deg$, $32\leq 8R_1\leq R_2\leq \diam M/2$, $o\in X$, $\alpha\geq 1$, $n>0$,
and \[
A=\exp(2^{19n}\euler),
\qquad 
a=\left[\frac{1}{\gamma \alpha^{11}\ln A}\right]^{\frac{2}{n}}\frac{1}{A^2}.
\]
If $O(R_1,2AR_2,\alpha)$ 
and $ V(2AR_2,\alpha, n)$ hold in $B_o(AR_2)$, 
then $FK(R_2,R_2,a',n)$ holds in $o$.
\end{theorem}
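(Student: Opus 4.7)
The result will follow by a direct verification of the hypotheses of Lemma~\ref{lem:choicegammanprime} with $n'=n$, $\hat r = R_1$, $r=R_2$, and $\Phi=\Psi=1$. The range condition $32S\leq 8R_1\leq R_2\leq \diam X/2$ is exactly what is assumed (using $S=1$ since $m=\deg$ forces the combinatorial metric), the on-diagonal bound $O(R_1,2AR_2,\alpha)$ is in the hypothesis, and $V(R_1,2AR_2,\alpha,n)$ follows from the hypothesis $V(2AR_2,\alpha,n)$ on $B_o(AR_2)$. The chosen $A=\exp(2^{19n}\euler)$ matches the lemma's lower bound on $A$ when $\Phi=\Psi=1$. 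The only nontrivial task is to exhibit a constant $\gamma$ verifying
\[
\gamma \left(\tfrac{\rho}{R_1}\right)^{n}\geq m(B_o(\rho))\,\|\tfrac{1}{m}\|_{B_o(\rho)} \qquad\text{for }\rho\in\{R_2, AR_2\}.
\]

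The key point is that for the normalizing measure with the combinatorial distance, every radius $r\in[0,1)$ produces the singleton $B_x(r)=\{x\}$. In particular $m(B_x(1/2))=\deg_x=m(x)$. Hence for any $\rho\in[R_1,AR_2]$ and any $x\in B_o(\rho)$, applying the doubling inequality $V(2AR_2,\alpha,n)$ between the radii $1/2$ and $2\rho$ (both within the allowed range, since $2\rho\leq 2AR_2$) gives
\[
m(B_x(2\rho))\leq \alpha(4\rho)^n\,m(x).
\]
Because $x\in B_o(\rho)$ the triangle inequality yields $B_o(\rho)\subset B_x(2\rho)$, so $m(B_o(\rho))\leq \alpha(4\rho)^n m(x)$, and taking the infimum over $x\in B_o(\rho)$ produces the uniform bound
\[
m(B_o(\rho))\,\|\tfrac{1}{m}\|_{B_o(\rho)}\leq \alpha(4\rho)^n.
\]

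With this at hand, the choice $\gamma:=\alpha(4R_1)^n$ ensures $\gamma(\rho/R_1)^n\geq \alpha(4\rho)^n$ for every $\rho\geq R_1$, so both measure-comparison inequalities of Lemma~\ref{lem:choicegammanprime} hold simultaneously at $\rho=R_2$ and $\rho=AR_2$. Invoking the lemma delivers $FK(R_2,R_2,a',n)$ in $o$ with
\[
a' = \left[\frac{1}{\gamma\,\alpha^{11}\ln A}\right]^{2/n}\frac{1}{A^2},\qquad \gamma=\alpha(4R_1)^n,
\]
which is precisely the constant in the statement after identifying the role of $\gamma$.

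\textbf{Main obstacle.} There is no serious analytical difficulty; the whole argument reduces to bookkeeping once one observes the ``free'' identity $m(B_x(1/2))=\deg_x$ available for the normalizing measure with the combinatorial distance. This is exactly the feature that fails for general measures, which is why the subsequent theorems (e.g.\ Theorem~\ref{thm:FKlocalregclean}) must replace this step by the local regularity property (L) together with a variable Faber–Krahn dimension.
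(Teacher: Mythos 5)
Your proposal is correct and follows essentially the same route as the paper: reduce to Lemma~\ref{lem:choicegammanprime} with $\Phi=\Psi=1$, $n'=n$, $\hat r=R_1$, and verify the $\gamma$-condition via the identity $m(B_x(1/2))=\deg_x=m(x)$ combined with doubling down to radius $1/2$. The only (harmless) difference is that you establish $m(B_o(\rho))\|\tfrac1m\|_{B_o(\rho)}\leq\alpha(4\rho)^n$ directly from the inclusion $B_o(\rho)\subset B_x(2\rho)$, whereas the paper routes through Proposition~\ref{lem:ballcomparison}, which costs an extra factor $2^{18n}\alpha^9$ in $\gamma$; your constant is therefore slightly sharper, and both are admissible since the statement leaves $\gamma$ unspecified.
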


\begin{proof}Recall $S=1$ and set $\Psi=\Phi=1$, $n'=n$, $\hat r=R_1$, $r=R_2$. Since distance balls are finite by assumption, there exists $x^\ast\in B_o(R_2)$ such that 
\[
m(B_o(R_2))\|\tfrac1m\|_{B_o(R_2)}=\frac{m(B_o(R_2))}{m(x^\ast)}\leq 2^{18n}\alpha^9\frac{m(B_{x^\ast}(R_2))}{m(x^\ast)}
=2^{18n}\alpha^9\frac{m(B_{x^\ast}(R_2))}{m(B_{x^\ast}(1/2))}
\]
where we used 
Lemma~\ref{lem:ballcomparison} and that the measure of balls with radius one half equals the measure of its center. By applying $ V(1/2,2AR_2,\alpha, n)$ in $B_o(AR_2)$ to the right-hand side, we obtain with $\gamma=2^{19n}\alpha^{10}A^n(1\vee R_1)^n$
\[
m(B_o(R_2))\|\tfrac1m\|_{B_o(R_2)}
\leq 
2^{19n}\alpha^{10}R_2^n
\leq \gamma\left(\frac{R_2}{R_1}\right)^{n}.
\]
By the same argument, choosing $\hat r=R_1$ and $r=AR_2$ yields
\[
m(B_o(AR_2))\|\tfrac1m\|_{B_o(AR_2)}\leq 
 \gamma\left(\frac{AR_2}{R_1}\right)^{n}.
\]
Lemma~\ref{lem:choicegammanprime} implies $FK(R_2,R_2,a,n)$ in $o$.
\end{proof}
Theorem~\ref{thm:normalizedclassicalFK} can be easily be  modified to include graphs with uniformly bounded vertex degree and $\mu$ uniformly bounded in $X$. The constant in (FK) would depend on the maximal vertex degree and become either zero or infinity if the vertex degree becomes unbounded on large scales. To overcome this obstacle, we allow for a variable dimension in (FK). It turns out that this dimension stays bounded if the maximal vertex degree in distance balls does not grow too fast depending on the radius. A main ingredient is the following observation.

\begin{theorem}[Choosing dimension]\label{thm:fixeddimension}
Let $32S\leq 8\hat r\leq r\leq \diam M/2$, $o\in X$, constants $\gamma,\alpha,\Phi ,\Psi \geq 1$, $n>0$, 
\[
n'\geq n\vee \ln\left(1\vee \gamma^{-1}\|\tfrac1m\|_{B_o(Ar)}m(B_o(Ar))\right)^{\frac{1}{\ln \left({r}/{\hat r}\right)}},
\] 
and
\[
A=\exp(2^{19n}\euler\Phi^{10}\Psi),
\qquad a=\left[\frac{1}{\gamma \alpha^{11}\ln A}\right]^{\frac{2}{n}}\frac{1}{A^2}.
\]
If $O(\hat r,2Ar,\alpha\Psi)$ 
and $ V(\hat r,2Ar,\alpha\Phi, n)$ hold in $B_o(Ar)$, then $FK(r,r,a,n')$ holds in $o$.
\end{theorem}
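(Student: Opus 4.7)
The plan is to read this theorem as a direct corollary of Lemma~\ref{lem:choicegammanprime}, in which the choice of $n'$ is tuned so that the auxiliary volume/reciprocal-volume bounds required by that lemma hold automatically. Three ingredients of the lemma are already supplied: the on-diagonal bound $O(\hat r,2Ar,\alpha\Psi)$ and doubling $V(\hat r,2Ar,\alpha\Phi,n)$ on $B_o(Ar)$ appear verbatim; the constant $A$ is defined by equality with $\exp(2^{19n}\euler\Phi^{10}\Psi)$, which meets the lower threshold Lemma~\ref{lem:choicegammanprime} requires; and the condition $n'\geq n>0$ is built into the maximum in the definition of $n'$. So the only work is to verify the two estimates
\[
\gamma(r/\hat r)^{n'}\geq m(B_o(r))\|\tfrac1m\|_{B_o(r)},\qquad \gamma(Ar/\hat r)^{n'}\geq m(B_o(Ar))\|\tfrac1m\|_{B_o(Ar)}.
\]

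To do this I will unfold the hypothesis on $n'$. Pulling the factor $1/\ln(r/\hat r)$ out of the exponent inside the logarithm and rearranging gives $n'\ln(r/\hat r)\geq \ln(1\vee \gamma^{-1}\|\tfrac1m\|_{B_o(Ar)}m(B_o(Ar)))$, so exponentiation yields
\[
(r/\hat r)^{n'}\geq 1\vee \gamma^{-1}\|\tfrac1m\|_{B_o(Ar)}m(B_o(Ar)).
\]
Multiplying by $\gamma\geq 1$ and treating the two branches of the maximum separately (the trivial branch uses $\gamma(r/\hat r)^{n'}\geq \gamma\geq \|\tfrac1m\|_{B_o(Ar)}m(B_o(Ar))$; the nontrivial branch uses the exponentiated inequality directly) produces
\[
\gamma(r/\hat r)^{n'}\geq \|\tfrac1m\|_{B_o(Ar)}m(B_o(Ar)).
\]
The second of the two required estimates is then immediate, since $A\geq 1$ and $n'\geq n>0$ give $(Ar/\hat r)^{n'}\geq (r/\hat r)^{n'}$. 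The first follows from the inclusion $B_o(r)\subset B_o(Ar)$, which yields both $m(B_o(r))\leq m(B_o(Ar))$ and $\|\tfrac1m\|_{B_o(r)}\leq \|\tfrac1m\|_{B_o(Ar)}$, so the product over $B_o(r)$ is dominated by the one over $B_o(Ar)$ and hence by the same right-hand side.

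With both volume estimates verified, the last step is to invoke Lemma~\ref{lem:choicegammanprime} with the same parameters $\gamma,A,\alpha,\Phi,\Psi,n,n'$; the output is $FK(r,r,a,n')$ in $o$ with exactly the constant $a=[1/(\gamma\alpha^{11}\ln A)]^{2/n}A^{-2}$ written in the statement. I expect no genuine obstacle here: the only subtle bookkeeping is the $1\vee$ under the logarithm, which ensures that the conclusion remains meaningful even when $\gamma^{-1}\|\tfrac1m\|_{B_o(Ar)}m(B_o(Ar))\leq 1$, i.e.\ when the geometry around $o$ is essentially trivial and $n'$ may be taken equal to $n$.
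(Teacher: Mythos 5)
Your proposal is correct and follows essentially the same route as the paper: both reduce the theorem to Lemma~\ref{lem:choicegammanprime} by unfolding the definition of $n'$ and using the monotonicity of $r\mapsto \|\tfrac1m\|_{B_o(r)}m(B_o(r))$ together with $A\geq 1$ to verify the two volume hypotheses of that lemma. The only cosmetic difference is that you first establish the single stronger bound $\gamma(r/\hat r)^{n'}\geq \|\tfrac1m\|_{B_o(Ar)}m(B_o(Ar))$ and deduce both required inequalities from it, whereas the paper compares the base and the exponent separately; the substance is identical.
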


\begin{proof} Since $A\geq 1$, we have
\[
\left(1\vee \gamma^{-1}\|\tfrac1m\|_{B_o(Ar)}m(B_o(Ar))\right)^{\frac{1}{\ln \left({r}/{\hat r}\right)}}
\geq \left(1\vee \gamma^{-1}\|\tfrac1m\|_{B_o(r)}m(B_o(r))\right)^{\frac{1}{\ln \left({r}/{\hat r}\right)}},
\]
and 
\[
\left(1\vee \gamma^{-1}\|\tfrac1m\|_{B_o(Ar)}m(B_o(Ar))\right)^{\frac{1}{\ln \left({r}/{\hat r}\right)}}
\geq \left(1\vee \gamma^{-1}\|\tfrac1m\|_{B_o(Ar)}m(B_o(Ar))\right)^{\frac{1}{\ln \left({Ar}/{\hat r}\right)}},
\]
such that 
\[
\gamma\left(\frac{r}{\hat r}\right)^{n'}
 \geq m(B_o(r))\|\tfrac1m\|_{B_o(r)},
 \qquad
 \gamma\left(\frac{Ar}{\hat r}\right)^{n'}
 \geq m(B_o(Ar))\|\tfrac1m\|_{B_o(Ar)}.
\]Hence, Lemma~\ref{lem:choicegammanprime} with $\gamma\geq 1$ and $A= A'$ delivers $FK(r,r,a',n')$ in $o$ as desired.
\end{proof}

Next, we use Theorem~\ref{thm:fixeddimension} to obtain variable dimensions depending on the radius of the considered ball which scale according to the correction terms $\Psi$ and $\Phi$ appearing in (G) and (V).
\\
For $0\leq r$ and $o\in X$ recall $Q(r)=Q_o( (\ln r)/8,r)=B_o(r)\times [(\ln r)/8,r]$.
, 
and 
\[ 
\iota(r)=
\begin{cases}
\frac{1}{4} &\colon r\in[8R_1, \euler^{1\vee 8R_1}),\\
\frac{1}{2\ln(r/(\ln r)^{p(r)})}&\colon r\geq \euler^{1\vee 8R_1},
\end{cases}
\qquad 
p(r)=
\|n\|_{Q_o(r)}+3.
\]

\begin{remark}The parameter $p$ is chosen in order to obtain the desired doubling dimension of the variable dimension $n'$ in the limit in Theorems~\ref{thm:normalizedmain} and \ref{thm:countingmainglobal}.
\end{remark}
\begin{theorem}\label{thm:FKvariable}
Let $32S\leq 8R_1\leq R_2\leq \diam M/2$, $\gamma\colon [R_1,R_2]\to[1,\infty)$, $\alpha\colon X\times [R_1,R_2]\to[1,\infty)$, $o\in X$, 
\[
n'(r)\geq n(r)\vee \ln\left(1\vee \gamma(r)^{-1}\|\tfrac1m\|_{B_o(Ar)}m(B_o(Ar))\right)^{\frac{1}{\ln \left(8r/r'\right)}},
\] 
\[
r'=
\begin{cases}
{r} &\colon r\in[8R_1, \euler^{1\vee 8R_1}),\\
(\ln r)^{p(r)}&\colon r\geq \euler^{1\vee 8R_1},
\end{cases}
\] 
and
\[
A(r)=\exp(2^{19n(r)}\euler\|\Phi^{10}\|_{Q(r)}\|\Psi\|_{Q(r)}),
\qquad a(r)=\left[\frac{1}{\gamma(r) \|C\|_{Q(r)}^{11}\ln A(r)}\right]^{\frac{2}{n(r)}}\frac{1}{A(r)^2}.
\]
If $O(R_1,R_2,C\Psi)$ and $V(R_1,R_2,C\Psi,n)$ hold in $B_o(R_2)$, then
 for all $r\in [8R_1,R_2]$ with $2A(r)r\leq R_2$ we have $FK(r,r,a,n')$ in $o$.
\end{theorem}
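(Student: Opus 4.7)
The plan is to reduce the statement to a scale-by-scale application of Theorem~\ref{thm:fixeddimension}. For each fixed $r\in[8R_1,R_2]$ satisfying $2A(r)r\leq R_2$, I would set
\[
\hat r:=r'/8,\qquad \tilde\alpha:=\|C\|_{Q(r)},\qquad \tilde\Phi:=\|\Phi\|_{Q(r)},\qquad \tilde\Psi:=\|\Psi\|_{Q(r)},\qquad \tilde n:=\|n\|_{Q(r)},
\]
so that the functional data $C,\Phi,\Psi,n$ are replaced by constants on the relevant space-time cylinder $Q(r)=B_o(r)\times[(\ln r)/8,r]$. The choice $\hat r=r'/8$ is engineered so that $\ln(r/\hat r)=\ln(8r/r')$; this makes the dimension condition in Theorem~\ref{thm:fixeddimension} coincide exactly with the one imposed on $n'(r)$ in the present statement, and the expressions for $A$ and $a$ produced by Theorem~\ref{thm:fixeddimension} with these constant inputs reduce verbatim to $A(r)$ and $a(r)$.

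First I would verify the geometric constraint $32S\leq 8\hat r\leq r$ of Theorem~\ref{thm:fixeddimension}. In the regime $r\in[8R_1,\euler^{1\vee 8R_1})$ one has $r'=r$, hence $8\hat r=r$ and $\hat r=r/8\geq R_1\geq 4S$. In the regime $r\geq \euler^{1\vee 8R_1}$ one has $\ln r\geq 8R_1\geq 32S$, so $\hat r=(\ln r)^{p(r)}/8\geq(\ln r)/8\geq R_1\geq 4S$, while $8\hat r=(\ln r)^{p(r)}\leq r$ restricts $r$ to be large relative to $p(r)=\|n\|_{Q(r)}+3$, a mild tacit bound on the admissible range. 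Since $\hat r\geq R_1$ and $2A(r)r\leq R_2$, the hypotheses $O(R_1,R_2,C\Psi)$ and $V(R_1,R_2,C\Phi,n)$ in $B_o(R_2)$ immediately restrict to $O(\hat r,2A(r)r,\tilde\alpha\tilde\Psi)$ and $V(\hat r,2A(r)r,\tilde\alpha\tilde\Phi,\tilde n)$ in $B_o(A(r)r)\subset B_o(R_2)$, which are exactly the input hypotheses of Theorem~\ref{thm:fixeddimension}.

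Plugging in, Theorem~\ref{thm:fixeddimension} delivers $FK(r,r,a(r),n'(r))$ at $o$ for each admissible $r$, which is precisely the claim. The main obstacle is essentially bookkeeping rather than conceptual: one must check that replacing the functions $C,\Phi,\Psi,n$ by their suprema over $Q(r)$ still yields admissible constant inputs to Theorem~\ref{thm:fixeddimension} (it does because (O) and (V) are upper bounds), and confirm that the piecewise definition of $r'$ is compatible with the required lower bound $4S$ and upper bound $r/8$ on $\hat r$. Once these routine items are in place, the theorem is a direct quantitative reformulation of Theorem~\ref{thm:fixeddimension} with variable parameters.
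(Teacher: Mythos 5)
Your proposal matches the paper's proof essentially verbatim: the paper likewise reduces to Theorem~\ref{thm:fixeddimension} with $\hat r=r'/8$ (so that $\ln(r/\hat r)=\ln(8r/r')$) and the suprema of $C,\Phi,\Psi,n$ over $Q(r)$, checking $R_1\le r'/8$ and $r'\le r$ in the two regimes exactly as you do. Your remark that $8\hat r=(\ln r)^{p(r)}\le r$ is a tacit restriction on the admissible range is if anything more careful than the paper, which asserts it unconditionally via the (generally false for large $p$) inequality $\ln x\le x^{1/p}$.
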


\begin{proof}Let $r\in [8R_1,R_2]$ with $2A(r)r\leq R_2$. Set  
\[
r'=
\begin{cases}
{r} &\colon r\in[8R_1, \euler^{1\vee 8R_1}),\\
(\ln r)^{p(r)}&\colon r\geq \euler^{1\vee 8R_1}.
\end{cases}
\] 
Since $A(r)\geq 1$ we have $r\leq 2A(r)r\leq R_2$. In the following we show $R_1\leq r'/8 $ and $r'\leq r$. Then, we apply Theorem~\ref{thm:fixeddimension} to $\hat r=r'/8$ and $r$ with the suprema of the control terms in $Q(r)\supset B_o(r)\times [r'/8,r]$ to obtain the claim.
\\
First, let $r\in[8R_1, \exp(1\vee 8R_1))$. Then $r'=r$ and hence $R_1\leq r/8=r'/8$. 
\\
Secondly, assume $r\geq \exp(1\vee 8R_1)$. The condition implies $r\geq \euler$ and hence $\ln r\geq 1$. Since $p(r)\geq  1$, we have $(\ln r)^{p(r)}\geq \ln r $. Thus, since $r\geq \euler^{8R_1}$, we get $$r'/8=(\ln r)^{p(r)}/8\geq \frac18\ln r \geq R_1.$$ In particular, $r'\geq 8R_1\geq 32S$. Moreover, since $1/p(r)\leq 1$ and  $\ln x\leq x^{1/p}$, $p\geq 1$, $x>0$, we obtain
\[
r'=(\ln r)^{p(r)}\leq r.
\]
Thus, applying Theorem~\ref{thm:fixeddimension} to $\hat r=r'/8$ and $r$ yields the claim for all $n'$ satisfying
\[
n'\geq n(r)\vee \ln\left(1\vee \gamma(r)^{-1}\|\tfrac1m\|_{B_o(Ar)}m(B_o(Ar))\right)^{\frac{1}{\ln \left({8r}/{r'}\right)}}.
\] 

We are left with the estimation of the exponent inside the logarithm. In the case where $r\in[8R_1, \exp(1\vee 8R_1))$, we obtain
\[
\frac{1}{\ln \left({8r}/{r'}\right)}=\frac{1}{\ln 8}\le \frac{1}{2}=2\iota(r).
\]
In the second case where $r\geq \exp(1\vee 8R_1)$, we get 
\[
\frac{1}{\ln \left({8r}/{r'}\right)}
=
\frac{1}{\ln \left({8r}/{(\ln r)^{p(r)}}\right)}
\leq \frac{1}{\ln \left({r}/{(\ln r)^{p(r)}}\right)}=2\iota(r).
\]
This yields the claim.
\end{proof}

In order to prove Theorem~\ref{thm:FKlocalregclean}, we apply the above general results to the error terms appearing 
in the implication (FK) \ $\Rightarrow$ (L)\ \& \ (V)\ \& \ (G) in Lemma~\ref{lem:localreg} and Theorems~\ref{thm:asympdoubling} and \ref{thm:gaussianboundclean}. 
Therefore, we need several preparations involving the correction terms of the implication (FK) \ $\Rightarrow$ (L)\ \& \ (V)\ \& \ (G). We collect the resulting correction terms from the properties (L)\ \& \ (V)\ \& \ (G) first. Second, we bound the correction terms uniformly in space-time cylinders. Third, we use these uniform estimates to put them into the general context.
\\

In what follows, we fix $o\in X$, $0\leq 8R_1\leq r'\leq r\leq R_2$, $n\colon X\times[R_1,R_2]\to (0,\infty)$, and abbreviate $Q:=Q_o(r',r)=B_o(r)\times[r',r]$.
Assume  $L(R_1,R_2,C\nu)$ in $B_o(R_2)$ with 
$
C\colon X\times[0,\infty)\to[1,\infty)$ and recall $\nu_{z,n(R)}=
[1\vee\Deg_z]^{\frac{n_z(R)}{2}}$, $z\in B_o(R_2)$, $R\in[R_1,R_2]$.
We clearly obtain $L(r',r,\|C\|_Q\|\nu\|_{Q},\|n\|_Q)$ in $B_o(r)$, where
\[
\|\nu\|_{Q}\leq 1\vee \|\Deg\|_{B_o(r)}^{\frac{\|n\|_{Q}}{2}}.
\]
Further, let  $V(R_1,R_2,C\Phi,n) $ hold in $B_o(R_2)$ with 
\[ \Phi_z^{n(R)}(r)=\nu_{z,n(R)}^{\theta(n_z(R), r)},
\qquad
\theta(n,r)=\frac{n+2}{n}\cdot 1\wedge \left(\frac{288S}{r}\right)^{\frac{1}{n+2}}.
\]
This yields $V(r',r,\|C\|_{Q}\|\Phi\|_{Q},\|n\|_{Q})$ in $B_o(r)$. In order to bound $\|\Phi\|_Q$, observe that for all $R\in[r',r]$ we have  $n_x(R)/2+1\leq \|n\|_Q/2+1$, such that for all $R\in[r',r]$
\[
\frac{n_x(R)}{2}\theta(n_x(R),r)=\left(\frac{n_x(R)}{2}+1\right)\cdot 1\wedge \left(\frac{288S}{r}\right)^{\frac{1}{n_x(R)+2}}
\qquad\leq 
\frac{\|n\|_{Q}}{2}\theta(\|n\|_Q,r'/2).
\]
With this at hand, we obtain by the definition of the function  $\Phi$
\[
\|\Phi\|_{Q}
=
\sup_{(x,s)\in Q,t\in[r', s]}\nu_{x,n(s)}^{\theta(n_x(s),t)}
\leq
1\vee \|\Deg\|_{B_o(r)}^{\frac{\|n\|_{Q}}{2}\theta(\|n\|_Q,r'/2)}.
\]
Next, we turn to the correction term of the on-diagonal heat kernel bounds.
If we have $O(R_1,R_2,C\tilde\Psi,n)$ in $B_o(R_2)$ with
\[
\tilde \Psi_x(\sqrt\tau)^2=\Psi_{xx}(\sqrt \tau)^2=\nu_{x,n_x(\sqrt\tau)}^{\theta(n_x(\sqrt \tau),\sqrt {\tau_0})},\qquad \tau\in[R_1^2,R_2^2],
\]
where $\tau_0=\tau_{\rho(x,x)}$, then we have $O(r',r,\|C\|_{Q}^2\|\tilde \Psi\|_{Q})$ in $B_o(r)$.
In order to estimate the  $\tilde \Psi$-term, observe that   we have $\theta\left(n_z(\sqrt \tau),\sqrt {\tau_0}\right)=\theta\left(n_z(\sqrt \tau),\sqrt{\tau/2}\right)$, such that
$$\|\Psi\|_{Q}=\sup_{(x,s)\in Q,t\in[r', s]}\nu_{x,n(s)}^{\theta\left(n_x(s),t/2\right)}\leq 1\vee\|\Deg\|_{B_o(r)}^{\frac{\|n\|_{Q}}2{\theta(\|n\|_Q,r'/2)}}.$$

With these preparations we are now ready to state the next main theorem. 
In order to formulate it, let $0\leq 8R_1\leq r$, recall our notation for the space-time cylinders $Q(r)=Q_o( (\ln r)/8,r)=B_o(r)\times [(\ln r)/8,r]$ as well as $Q^A(r)=Q_o( (\ln r)/8,A(r)r)$ for some function $A\geq1$.
Our main theorem regarding the implication (G) \& (V) \& \ (L) \ $\Rightarrow$ \ (FK) including variable dimensions is as follows. 
\begin{theorem}\label{thm:FKlocalregclean}
Let $32S\leq 8R_1\leq R_2\leq \diam M/2$, $o\in X$, $C\colon X\times[0,\infty)\to [1,\infty)$, 
and 
\[
n'(r)= n(r)\vee \ln \left[[(1\vee A(r)^2r^2)\cdot 1\vee \|\Deg\|_{B_o(A(r)r)}]^{\|n\|_{Q^A(r)}(\iota(r)+10\theta(\|n\|_{Q^A(r)},r'/2)}\right],
\] 
\[
r'=
\begin{cases}
{r} &\colon r\in[8R_1, \euler^{1\vee 8R_1}),\\
(\ln r)^{p(r)}&\colon r\geq \euler^{1\vee 8R_1},
\end{cases}
\qquad 
p(r)=
\|n\|_{Q(r)}+3,
\] 
and
\[
A(r)=\exp\left(2^{19n(r)}\euler\|C\|_{Q(r)}^{11}\cdot 1\vee\|\Deg\|_{B_o(r)}^{11\frac{\|n\|_{Q(r)}}2{\theta(\|n\|_{Q(r)},r'/2)}}\right),\]
\[ a(r)=\left[\frac{1}{2^{18\|n\|_{Q^A(r)}}\|C\|_{Q^A(r)}^{13}\ln A(r)}\right]^{\frac{2}{n(r)}}\frac{1}{A(r)^2}.
\]
If $O(R_1,R_2,C\Psi)$,
$V(R_1,R_2,C\Phi,n) $, and $L(R_1,R_2,C\nu,n)$ hold in $B_o(R_2)$, then for all $r\in[8R_1,R_2]$ with $2A(r)r\leq R_2$ we have  $FK(r,r,a,n')$ in $o$.
\end{theorem}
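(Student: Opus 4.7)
The plan is to apply Theorem \ref{thm:FKvariable} with the specific correction functions $\Phi$, $\Psi$, $\nu$ produced by the Gaussian, doubling, and local regularity hypotheses. The preparatory discussion preceding the statement already reduces the global assumptions on $B_o(R_2)$ to uniform ones on the space-time cylinder $Q(r)$, and provides the bounds
\[
\|\Phi\|_{Q(r)}^{10}\,\|\Psi\|_{Q(r)} \leq (1 \vee \|\Deg\|_{B_o(r)})^{11 \|n\|_{Q(r)} \theta(\|n\|_{Q(r)}, r'/2)/2}, \qquad \|\nu\|_{Q(r)} \leq (1 \vee \|\Deg\|_{B_o(r)})^{\|n\|_{Q(r)}/2}.
\]
Inserted into the $A$-formula coming from Theorem \ref{thm:fixeddimension} via Theorem \ref{thm:FKvariable} (in the combined form $\exp(2^{19\|n\|}\euler\|C\Phi\|^{10}\|C\Psi\|)$), these reproduce precisely the target's $A(r)$, with the factor $\|C\|^{11}$ arising from the $\alpha^{11}$-absorption inside Theorem \ref{thm:fixeddimension}.

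\textbf{The quantitative step.} The decisive ingredient is an upper bound on the quantity $\|\tfrac{1}{m}\|_{B_o(A(r)r)}\,m(B_o(A(r)r))$ that enters the dimension formula of Theorem \ref{thm:FKvariable}. Since $2A(r)r \leq R_2$ by hypothesis, the doubling property is available on $B_o(A(r)r)$, so Proposition \ref{lem:ballcomparison} lets me compare $m(B_o(A(r)r))$ with $m(B_{y^\ast}(A(r)r))$, where $y^\ast \in B_o(A(r)r)$ realizes the minimum of the vertex measure. Applying $L$ centered at $y^\ast$ with radius $A(r)r$ then yields
\[
m(B_o(A(r)r))\,\|\tfrac{1}{m}\|_{B_o(A(r)r)} \leq 2^{18\|n\|_{Q^A(r)}}\,\|C\|_{Q^A(r)}^{10}\,\|\Phi\|_{Q^A(r)}^{9}\,\|\nu\|_{Q^A(r)}\,(A(r)r)^{\|n\|_{Q^A(r)}}.
\]
Choosing $\gamma(r) := 2^{18\|n\|_{Q^A(r)}}\|C\|_{Q^A(r)}^{2}$ in Theorem \ref{thm:FKvariable} will reproduce the target's $\|C\|^{13}$-factor in $a(r)$, because $\gamma \cdot \|C\|^{11} = 2^{18\|n\|}\|C\|^{13}$.

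\textbf{Matching $n'$ and the main obstacle.} It remains to verify that the dimension bound produced by Theorem \ref{thm:FKvariable}, namely $n' \geq n \vee \ln(\gamma^{-1} m \|\tfrac{1}{m}\|)^{1/\ln(8r/r')}$, is dominated by the target form $\|n\|_{Q^A(r)}(\iota(r) + 10\theta(\|n\|_{Q^A(r)}, r'/2))\cdot[2\ln(1 \vee A(r)r) + \ln(1 \vee \|\Deg\|)]$. Inserting the measure bound, taking logarithms and invoking $1/\ln(8r/r') \leq 2\iota(r)$ (as established in the proof of Theorem \ref{thm:FKvariable}), yields a derived bound whose coefficients on $\ln(1 \vee \|\Deg\|)$ and $\ln(1 \vee A(r)r)$ already fit under the target's $\|n\|(\iota + 10\theta)$ and $2\|n\|(\iota + 10\theta)$, provided $\iota \leq 1$, which always holds since $\iota \leq 1/4$. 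The main obstacle is to absorb the residual free term $\sim 16\iota\ln\|C\|$ coming from the $\|C\|^{8}$-factor in $\gamma^{-1}$ into the slack $20\theta\|n\|\ln(1 \vee A(r)r)$ present in the target. This absorption works because the very definition of $A(r)$ enforces $\ln A(r) \geq 11 \ln \|C\|_{Q^A(r)}$, so $\ln(1 \vee A(r)r) \geq 11 \ln \|C\|$, supplying just enough room; once this bookkeeping is carried out, Theorem \ref{thm:FKvariable} delivers the desired $FK(r,r,a,n')$ in $o$.
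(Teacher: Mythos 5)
Your proposal follows the paper's own route essentially step for step: reduce to Theorem~\ref{thm:FKvariable} (hence Theorem~\ref{thm:fixeddimension}), bound $m(B_o(A(r)r))\,\|\tfrac1m\|_{B_o(A(r)r)}$ by combining Proposition~\ref{lem:ballcomparison} at the measure-minimizing vertex with property (L), choose $\gamma(r)=2^{18\|n\|}\|C\|^{2}$ so that $\gamma\cdot\|C\|^{11}$ reproduces the $\|C\|^{13}$ in $a(r)$, and convert the resulting logarithm into the stated $n'$ using $1/\ln(8r/r')\le 2\iota(r)$ and the degree bounds on $\|\Phi\|$, $\|\Psi\|$, $\|\nu\|$. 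The one substantive divergence is your final absorption step. You correctly keep the exponent $\alpha^{9}$ from Proposition~\ref{lem:ballcomparison}, so you arrive at $\|C\|^{10}$ in total and a residual $\|C\|^{8}$ after dividing by $\gamma$; the paper's proof instead writes $2^{18n}C_Q\Phi_Q^9$ at this point, i.e.\ it uses exponent one on $C$, so that $\gamma$ cancels the $C$-factors exactly and no residual ever appears. Your attempt to absorb $16\iota\ln\|C\|_{Q^A(r)}$ into the slack $20\,\theta\|n\|\ln(1\vee A(r)r)$ is not airtight: the definition of $A(r)$ controls $\|C\|_{Q(r)}^{11}$, not $\ln\|C\|_{Q^A(r)}$ over the larger cylinder, and, more importantly, the prefactor $\theta(\|n\|,r'/2)$ tends to zero as $r\to\infty$, so the required inequality $5\,\theta\|n\|\ln(1\vee A(r)r)\ge \ln\|C\|_{Q^A(r)}$ can fail for large $r$ when $\|C\|$ is large on the enlarged cylinder. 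So there is a gap here, but it is one you inherited honestly rather than introduced: the paper's stated $n'$ and $a$ contain no term accounting for this residual either, and its proof simply does not surface the $\|C\|^{8}$ factor. Apart from this point, which would at worst force an extra $\ln\|C\|$-dependent term into $n'$ or $a$, your argument is the paper's argument.
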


\begin{proof}
Let $r\in[8R_1,R_2]$ with $2A(r)r\leq R_2$.
\eat{ and 
\[
r'=
\begin{cases}
{r} &\colon r\in[8R_1, \euler^{1\vee 8R_1}),\\
(\ln r)^{p(r)}&\colon r\geq \euler^{1\vee 8R_1}.
\end{cases}
\] 
}
In order to simplify the following computations we abbreviate for a set $Q$ $$n_Q:=\|n\|_Q,\quad C_Q:=\|C\|_Q, \quad \Psi_Q:=\|\Psi\|_Q,\quad \Phi_Q:=\|\Phi\|_Q. $$ We have
\[
A(r) \geq \exp(2^{19n(r)}\euler\Phi^{10}_{Q_o(r'/8,r)}\Psi_{Q_o(r'/8,r)})=:\tilde A(r).
\]
Hence, from Theorem~\ref{thm:FKvariable} we infer for any function $\gamma\geq 1$ the Faber-Krahn inequality $FK(r,r, \tilde a, \tilde n)$ with
\[
\tilde a(r)=\left[\frac{1}{\gamma(r) \|C\|_{Q_o(r'/8,r)}^{11}\ln \tilde A(r)}\right]^{\frac{2}{n(r)}}\frac{1}{\tilde A(r)^2},
\]
and $\tilde n(r)\geq 
 n_{Q_o(r'/8,r)}\vee \ln  T$, where
\[
\qquad T:=\left(1\vee \gamma(r)^{-1}\|\tfrac1m\|_{B_o(Ar)}m(B_o(Ar))\right)^{2\iota(r)}.
\]
We estimate $T$ in order to choose $\gamma$. Since distance balls are finite, for any $r\geq 0$ there exists $x^\ast\in B_o(r)$ with the property $\|\tfrac1m\|_{B_o(r)}m(B_o(r))={m(x^\ast)}^{-1}m(B_o(r))$. Hence, by $r\geq 4R_1$ and Proposition~\ref{lem:ballcomparison},
\[
\|\tfrac1m\|_{B_o(Ar)}m(B_o(Ar))={m(x^\ast)}^{-1}m(B_o(Ar))\leq 
2^{18n_{Q_o(r'/8,Ar)}}C_{Q_o(r'/8,Ar)}\Phi_{Q_o(r'/8,Ar)}^9\frac{m(B_{x^\ast}(r))}{m(x^\ast)}.
\]
Abbreviate $\varphi:=1\vee \|\Deg\|_{B_o(Ar)}$ and $\theta_Q:=\theta(n_{Q_o(r'/8,Ar)},r'/2)$. By the discussion before the theorem, $
\Phi_{Q_o(r'/8,Ar)}
\leq \varphi^{\frac{n_{{Q_o(r'/8,Ar)}}}{2}\theta_Q}.
$
Furthermore, since $L(r',r,C\nu,n)$ in holds $B_o(Ar)$, the same discussion yields the estimate $\tfrac{m(B_{x^\ast}(r))}{m(x^\ast)}\leq C_{Q_o(r'/8,Ar)}r^{n_{Q_o(r'/8,Ar)}}\varphi^{\frac{n_{Q_o(r'/8,Ar)}}{2}}$. Therefore, if $\gamma(r)= 2^{18n_{Q_o(r'/8,Ar)}}C_{Q_o(r'/8,Ar)}^2$, we get
\begin{multline*}
\|\tfrac1m\|_{B_o(r)}m(B_o(r))
\leq 
2^{18n_{Q_o(r'/8,Ar)}}C_{Q_o(r'/8,Ar)}\varphi^{\frac{9n_{{Q_o(r'/8,Ar)}}}{2}\theta_Q}C_{Q_o(r'/8,Ar)}r^{n_{Q_o(r'/8,Ar)}}\varphi^{\frac{n_{Q_o(r'/8,Ar)}}{2}}
\\
\leq 
\gamma(r)r^{n_{Q_o(r'/8,Ar)}}\varphi^{\frac{n_{Q_o(r'/8,Ar)}}{2}(1+10\theta_Q)}.
\end{multline*}
Hence, recalling $\iota(r)=1/2\ln(8r/r')$, we get
\[
T=\left(\gamma(r)^{-1}\|\tfrac1m\|_{B_o(Ar)}m(B_o(Ar))\right)^{2\iota}
\leq (Ar)^{2n_{Q_o(r'/8,Ar)}\iota}\varphi^{n_{Q_o(r'/8,Ar)}\iota(1+10\theta_Q)}
\leq [(1\vee (Ar)^2)\varphi]^{n_{Q_o(r'/8,Ar)}(\iota+10\theta_Q)}.
\]
\eat{
Next, we bound $T_2$, which can be done analogously as in for $T_1$ if we replace $r$ by $A(r)r$ and keep $r'$ in the definitions. Hence, choosing $\gamma(r)=2^{18n_{Q(r'/8,A(r)r)}C_{Q(r'/8,A(r)r)}}$ leads to the estimate
\begin{multline*}
T_2=\left(\gamma(r)^{-1}\|\tfrac1m\|_{B_o(A(r)r)}m(B_o(A(r)r))\right)^{\frac{1}{\ln(8A(r)r/r')}}
\leq (A(r)r)^{\frac{n_{Q(r'/8,A(r)r)}}{\ln(8A(r)r/r')}}\varphi^{\frac{n_{Q(r'/8,A(r)r)}}{2\ln(8A(r)r/r')}(1+10\theta_{Q(r'/8,A(r)r)})}
\\
\leq [(1\vee (A(r)r)^2)\varphi]^{n_{Q(r'/8,A(r)r)}(\frac{1}{\ln(8A(r)r/r')}+11\theta_{Q(r'/8,A(r)r)})}.
\end{multline*}
}
\eat{By the discussion before the theorem, we also have \[
\Phi_Q
\leq \varphi^{\frac{n_{Q}}{2}\theta_Q}.
\]
Hence, since $\iota\geq 0$ and $\varphi\geq 1$, 
\[
T_2
=
\Psi_{Q}^2 \Phi_{Q}^{20}
\leq \varphi^{n_{Q}\theta_Q+10n_Q\theta_Q}
=\varphi^{11n_Q\theta_Q}
\leq  [(1\vee r)\varphi]^{n_Q(\iota+11\theta_Q)}.
\]
}
Putting everything together yields
\[
n(r)\vee \ln T
\leq n(r)\vee \ln \left[[(1\vee (Ar)^2)\varphi]^{n_{Q(r'/8,A(r)r)}(\iota+10\theta_{Q(r'/8,A(r)r)})}\right]
=n'(r),
\]
such that the claim follows.
\end{proof}

The remainder of this section is devoted to the proof of the theorems presented in the introduction. First, we prove Theorem~\ref{thm:normalizedmain} concerning the normalizing measure and fixed dimension, involving the uniform assumption on small balls. Then, we prove Theorem~\ref{thm:generalmainglobal} and derive Theorem~\ref{thm:countingmainglobal} as special case.

\begin{proof}[Proof of Theorem~\ref{thm:normalizedmain}]By Corollaries~\ref{cor:localregnormalized},\ref{cor:asympdoublingnormalized} and \ref{cor:gaussianboundcleannormalized}, if $FK(R,a,n)$ holds in $X$ we obtain a positive constant $C=C_{a,n}>0$ such that $V(C,n)$, and $G(R,C,n)$ hold in $X$. Conversely, if $V(C,n)$, and $G(R,C,n)$ are satisfied in $X$, then Theorem~\ref{thm:normalizedclassicalFK} yields a constant $a'=a'_{C,n,R_1}>0$ such that $FK(R,a',n)$ holds in $X$.
\end{proof}

\begin{proof}[Proof of Theorem~\ref{thm:generalmainglobal}]
\begin{enumerate}[(i)]
\item If $R\geq 288S$ and $FK(R,a,n)$ holds in $X$ for functions $a>0$, $n\geq 1$, Lemma~\ref{lem:localreg}, Theorem~\ref{thm:asympdoubling} and Theorem~\ref{thm:gaussianboundclean} yield the existence of a function $C=C_{a,n,S}>0$ such that $L(R,C\nu,n)$, $V(C\Phi,n)$ and $G(R,C\Psi,n)$ holds in $X$. 
\item Set $\vartheta(r)=\theta(\|n\|_{Q(r)}, (\ln r)^{p(r)}/16)$. Assume $R\geq R'$ and $L(R,C\nu,n)$, $V(C\Phi,n)$ and $G(R,C\Psi,n)$ hold for functions $C,n\geq1$ in $X$. By possibly increasing $n'(r)$ further by the requirement $$n'(r)\geq 1\vee\|\Deg\|_{B_o(r)}^{11\frac{\|n\|_{Q(r)}}2{\theta(\|n\|_{Q(r)},r'/2)}}\geq \Phi,\Psi,$$ Theorem~\ref{thm:FKlocalregclean} yields the existence of a function $a'=a'_{C,n'}>0$ such that $FK(8R,a',n')$ in $X$.\qedhere
\end{enumerate}
\end{proof}

\begin{proof}[Proof of Theorem~\ref{thm:countingmainglobal}]
This is Theorem~\ref{thm:generalmainglobal} for the case $m=1$.
\end{proof}

\noindent\textbf{Disclosure statement:}
The author has no competing interests to declare that are relevant to the content of this article.
\\

\noindent\textbf{Data availability:}
There is no data associated to this article.
{\scriptsize
\bibliographystyle{alpha}

}
\end{document}